\documentclass[11pt]{amsart}

\usepackage{fullpage} 
\usepackage[foot]{amsaddr} 
\usepackage{graphicx} 
\usepackage{amsmath, amssymb, amsfonts, amsthm, mathtools} 
\usepackage{enumerate} 

\usepackage{color} 
\usepackage{url} 
\usepackage[plainpages = false, colorlinks, citecolor = blue, filecolor = black, linkcolor = red, urlcolor = cyan]{hyperref} 
\usepackage{appendix}
\usepackage{multicol}
\usepackage{comment}

\theoremstyle{definition} 
\newtheorem{definition}{Definition}[section] 
\theoremstyle{plain} 
\newtheorem{theorem}[definition]{Theorem}
\newtheorem{proposition}[definition]{Proposition}

\theoremstyle{remark}

\allowdisplaybreaks 

\usepackage[backend = bibtex, giveninits = true, style = alphabetic, natbib = true, url = false, sorting = nyt, doi = true, abbreviate = true, backref = false]{biblatex}
\addbibresource{Steklov-Asymptotics.bib} 
\renewbibmacro{in:}{\ifentrytype{article}{}{\printtext{\bibstring{in}\intitlepunct}}}

\newcommand{\R}{\mathbb{R}} 
 
\newcommand{\Z}{\mathbb{Z}} 
\newcommand{\N}{\mathbb{N}}

\newcommand{\vareps}{\varepsilon} 
\newcommand{\del}{\partial} 
\newcommand{\n}{\mathbf{n}} 
\newcommand{\tn}{\textnormal} 
\renewcommand{\vec}[1]{\hat{\boldsymbol{#1}}} 

\begin{document} 
\title{Steklov Eigenvalues of Nearly Circular Area-Normalized Domains} 
\author{Lucas Alland}
\address{Department of Mathematics \& Statistics, Swarthmore College} 
\email{lalland1@swarthmore.edu} 

\author{Robert Viator} 
\address{Department of Mathematics, Denison University, Granville, OH} 
\email{viatorr@denison.edu} 

\subjclass[2010]{35C20, 35P05, 41A58} 
\keywords{Steklov eigenvalues, perturbation theory, hyperspherical harmonics, isoperimetric inequality} 

\date{\today} 

\begin{abstract} 
We consider Steklov eigenvalues of nearly circular domains in $\R^{2}$ of fixed unitary area. In \cite{viator2018}, the authors treated such domains as perturbations of the disk, and they computed the first-order term of the asymptotic expansions of the Steklov eigenvalues for reflection-symmetric perturbations; here, we expand these first-order results beyond reflection-symmetry.  We also recover the second-order asymptotic expansions, which enable us to prove that no Steklov eigenvalue beyond the first positive one is locally shape-optimized by the disk.
\end{abstract} 

\maketitle


\section{Introduction} 
Let $\Omega\subset\R^{2}$ be a bounded domain. The Steklov eigenvalue problem for $(\lambda, u)$ on $\Omega$ is given by 
\begin{subequations} 
\begin{alignat}{2} 
\label{eq:Steklov1} \Delta u & = 0 && \ \ \tn{ in } \Omega, \\ 
\label{eq:Steklov2} \del_\n u & = \lambda u && \ \ \tn{ on } \del\Omega, 
\end{alignat} 
\end{subequations} 
where $\Delta$ is the Laplacian acting on $H^1(\Omega)$, $\del_\n u = \nabla u\cdot \n$ is the unit outward normal derivative on the boundary $\del\Omega$, and $\lambda$ is the spectral parameter. It is well-known that the Steklov spectrum is discrete as long as the trace operator $T\colon H^1(\Omega)\to L^2(\del\Omega)$ is compact \cite{girouard2017}. Moreover, the eigenvalues are real and we enumerate them, counting multiplicity, in increasing order 
\begin{equation*} 
0 < \lambda_0(\Omega) < \lambda_1(\Omega)\le \lambda_2(\Omega)\le \dots \nearrow\infty. 
\end{equation*} 
The Steklov eigenvalue problem has received considerable attention in the literature; see the survey papers \cite{girouard2017, colbois2023} and references therein. 

The Steklov eigenvalue problem was first introduced by Vladimir Steklov in \cite{stekloff1902} to describe the stationary heat distribution in a body $\Omega$ whose heat flux through the boundary is proportional to the temperature. For planar domains, the Steklov eigenvalues are the squares of the natural frequencies of a vibrating free membrane with all its mass concentrated along the boundary \cite[p. 95]{Bandle}. Steklov eigenvalues also have applications in optimal material design for both electromagnetism and torsional rigidity \cite{lipton1998a, lipton1998b}. Recently, Cakoni et al. \cite{cakoni2016} used Steklov eigenvalues in nondestructive testing, where they established a crucial relationship between small changes in the (possibly complex valued) refractive index of a scattering object and the corresponding change in the eigenvalue of a modified Steklov problem. For this problem, numerical results in \cite{cakoni2016} revealed that a localised defect of the refractive index in a disc perturbs only a small number of modified Steklov eigenvalues. 

The problem of Steklov shape optimization has seen many new developments in the past decade.  In \cite{bogosel2017}, the authors discovered that the global optimizing domain for the $k$th Steklov eigenvalue (among shapes of fixed area) is a union of at most $k$ disjoint Jordan domains.  The authors of \cite{kaoosting2017} explore the shape-optimization of the Steklov problem among simply connected domains in $\mathbb{R}^2$ numerically, developing a conjecture that symmetric ``star-shaped" domains are the optimizers among this class of shapes in the plane.  In \cite{fraser2019}, the authors explore the fixed surface volume Steklov optimization problem for contractible domains in $\mathbb{R}^n$ for $n\geq 3$, proving that Weinstock's inequality \cite{weinstock1954} for disks in $\mathbb{R}^2$ does \emph{not} hold in dimension $n \geq 3$.  In \cite{fraser2016}, the same authors find a remarkable connection between optimal metrics for the first non-zero Steklov eigenvalue of surfaces with boundary and free-boundary minimal surfaces: this relationship is explored in great numerical detail in \cite{oudetosting2021} and \cite{oudetosting2023}.  In \cite{viator2018}, \cite{viator2022}, and \cite{tanviator2024}, the authors further explore the area- and volume-constrained problems for domains in $\mathbb{R}^d$ for $d\geq 3$ using perturbative methods, showing for all $d\geq 3$, the ball is never a local maximizer for an infinite list of Steklov eigenvalues; this is done by expanding the Steklov eigenvalues in a power series expansion in a shape-deformation parameter $\varepsilon >0$ and calculating the order-$\varepsilon$ coefficients for the power series expansions. 

\subsection{Main results} 
We now further explore the method outlined in \cite{viator2018}, \cite{viator2022}, and \cite{tanviator2024} in $\mathbb{R}^2$; in particular, we seek out the order-$\varepsilon^2$ terms in the power series expansions of the Steklov eigenvalues of nearly-circular domains.  The primary deviations of this paper from \cite{viator2018} are twofold.  First, we consider nearly-circular domains of general shape, ignoring the constraint of reflection-symmetry imposed in \cite{viator2018}.  Second, we will consider only domains $\Omega_\varepsilon$ of fixed area by normalizing the domains to have unit area \emph{before} we begin the asymptotic analysis.

In particular, we consider domains of the form
\begin{align}
    \label{eq:Domainintro}
    \Omega_\vareps = \{(r,\theta) \mid 0 \leq \theta <2\pi, 0 \leq r \leq \frac{1+\vareps\rho(\theta)}{\sqrt{v(\vareps)}}\}\text{ , } 
\end{align}
where $v(\varepsilon)$ is an area-normalization factor, and $\rho \in C^1(S^1)$.  We expand the Steklov eigenvalues of the domain $\Omega_\vareps$ as a power series in $\vareps$, and we carefully extract the order-$\vareps$ and order-$\vareps^2$ terms.  The order-$\vareps^2$ terms give insight into the case where the order-$\vareps$ terms are zero, revealing the following theorem about all non-zero Steklov eigenvalues after the first:

\begin{theorem} \label{thm:noballs} 
Let $\sigma_k(\Omega)$ be the $k$th non-zero Steklov eigenvalue of the area-normalized domain $\Omega$.  If $k\geq 2$, then $\sigma_k$ is never \emph{locally} maximal when $\Omega$ is a disk. 
\end{theorem}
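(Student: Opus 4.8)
The plan is to combine the first- and second-order perturbative expansions derived above with the degeneracy structure of the Steklov spectrum of the disk. The unit-area disk has radius $\pi^{-1/2}$, and its nonzero Steklov eigenvalues are $m\sqrt\pi$ for $m \ge 1$, each of multiplicity two with boundary eigenfunctions $\cos(m\theta)$ and $\sin(m\theta)$. Under the enumeration by $k$, the pair at level $m$ occupies positions $\sigma_{2m-1} \le \sigma_{2m}$, so the even-indexed eigenvalues are the upper branches and the odd-indexed ones the lower branches; the single eigenvalue excluded from the statement, $\sigma_1$, is the lower branch at $m = 1$. To prove that a given $\sigma_k$ (with $k \in \{2m-1, 2m\}$) is not locally maximal at the disk, it suffices to exhibit one perturbation $\rho$ for which $\sigma_k(\Omega_\varepsilon) > m\sqrt\pi$ for all small $\varepsilon > 0$.

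For even $k = 2m$ I would argue at first order using the single-mode perturbation $\rho(\theta) = \cos(2m\theta)$. Since $\rho$ has zero mean, the normalization factor $v(\varepsilon)$ is unaffected to first order, and the only first-order effect is the splitting of the degenerate pair, controlled by the $2m$-th Fourier mode of $\rho$. A short computation shows that the $2 \times 2$ first-order matrix on $\mathrm{span}\{\cos(m\theta), \sin(m\theta)\}$ is traceless with nonzero eigenvalues $\pm c$; hence the upper branch satisfies $\sigma_{2m}(\Omega_\varepsilon) = m\sqrt\pi + |c|\varepsilon + O(\varepsilon^2) > m\sqrt\pi$, so $\sigma_{2m}$ is not locally maximal.

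For odd $k = 2m - 1$ with $m \ge 2$ the first-order splitting sends the lower branch downward, so I would instead choose a perturbation that keeps the pair degenerate and push it up at second order. The key device is symmetry: taking $\rho(\theta) = \cos(n\theta)$ makes $\Omega_\varepsilon$ invariant under the dihedral group $D_n$, and the level-$m$ eigenspace carries the two-dimensional representation on which the generating rotation acts by angle $2\pi m/n$. This representation is irreducible precisely when $n \nmid 2m$, and in that case Schur's lemma forces the first- and second-order perturbation matrices to be scalar; so the pair stays degenerate, the (traceless) first-order term vanishes, and $\sigma_{2m-1}(\Omega_\varepsilon) = \sigma_{2m}(\Omega_\varepsilon) = m\sqrt\pi + \beta\varepsilon^2 + O(\varepsilon^3)$. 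Now $2m$ has no divisor strictly between $m$ and $2m$, so every integer $n$ with $m < n < 2m$ is admissible, and such an $n$ exists exactly when $m \ge 2$. For such $n$ the mode $\cos(n\theta)$ couples level $m$ to level $n - m$ with the positive energy denominator $(2m - n)\sqrt\pi$ and to level $n + m$ with a negative one, while the area normalization $v(\varepsilon) = \pi(1 + \tfrac{\varepsilon^2}{2}) + o(\varepsilon^2)$ contributes the strictly positive scaling term $\tfrac{1}{4}m\sqrt\pi$ to $\beta$. I would show that for $m \ge 2$ these positive contributions outweigh the single negative coupling, giving $\beta > 0$ and hence $\sigma_{2m-1}(\Omega_\varepsilon) > m\sqrt\pi$.

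The main obstacle is this final sign determination for $\beta$. The expansion furnishes $\beta$ as a sum of three competing pieces, namely the positive scaling term from area normalization, a diagonal correction from the quadratic part of the map onto the disk, and the two couplings to levels $n \pm m$ whose denominators have opposite signs, and one must verify that the total is strictly positive for a suitable admissible $n$ when $m \ge 2$. The same computation should reveal why $m = 1$ behaves differently: there the interval $(1,2)$ is empty, every admissible perturbation ($n \ge 3$) couples level $1$ only upward through negative denominators, and these negative couplings dominate the positive scaling term, so that $\beta \le 0$. This is the structural reason the disk continues to locally maximize $\sigma_1$ while maximality fails for every $\sigma_k$ with $k \ge 2$.
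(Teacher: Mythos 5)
Your overall strategy coincides with the paper's: for the degenerate pair at level $m$ you perturb by a single Fourier mode $\cos(n'\theta)$ with $m<n'<2m$, so that the first-order matrix (which depends only on the $2m$-th Fourier coefficients of $\rho$) vanishes, and you then argue that the second-order coefficient is positive for both branches. Two of your refinements are welcome: handling the upper branches $\sigma_{2m}$ separately by the first-order splitting (the paper's own proof only invokes its Proposition~5.1 for $n>1$ and is silent about $\sigma_2$), and deducing that the first- and second-order matrices are scalar from irreducibility of the dihedral representation via Schur's lemma, where the paper instead verifies by direct computation that $M_n^{(2)}$ is diagonal with equal entries.

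The genuine gap is the step you yourself flag as the main obstacle: the positivity of the second-order coefficient $\beta$ is asserted, not proved, and the heuristic you offer in its support misidentifies the signs of the competing contributions. In the actual expansion (see the paper's formulas \eqref{mn211} and their evaluation in Section~5.5), \emph{both} couplings, to level $n'-m$ and to level $n'+m$, enter with a positive sign; for instance at $m=2$, $n'=3$ they contribute $+10\sqrt{\pi}$ and $+\tfrac{5}{3}\sqrt{\pi}$ respectively. The negative contributions come instead from the diagonal geometric corrections, chiefly the $(\rho')^2$ term $-\tfrac{1}{4}m(n')^2\sqrt{\pi}$ and the $\rho^2$ term $-\tfrac{1}{2}m(m-1)\sqrt{\pi}$, which your accounting omits. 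This is not a cosmetic point: a crude comparison of the dominant terms shows the lower coupling beats the $(\rho')^2$ term only when roughly $n'>m\sqrt{2}$, so for example $n'=m+1$ with $m$ large appears to give $\beta<0$, and ``every integer $n$ with $m<n<2m$'' is \emph{not} a workable choice. The paper's specific choice $n'=m+\lceil m/2\rceil\ge \tfrac{3}{2}m$ is made precisely so that the explicit sum
\begin{equation*}
\beta \;=\; -\tfrac{1}{2}m(m-1)\sqrt{\pi}-\tfrac{1}{4}m(n')^{2}\sqrt{\pi}+\tfrac{1}{4}m\sqrt{\pi}
+\frac{m(n'-m)(2m+1)(n'+1)\sqrt{\pi}}{4(2m-n')}+\frac{m(n'+m)(n'-1)\sqrt{\pi}}{4n'}
\end{equation*}
works out to a manifestly positive polynomial in $m$. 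Without carrying out this computation (or an equivalent one) and without a correct identification of which terms are negative, the decisive inequality $\beta>0$, and hence the theorem for the odd-indexed eigenvalues, remains unestablished; the same applies to your closing explanation of why $m=1$ is exceptional.
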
 

The proof of Theorem \ref{thm:noballs} relies on explicit calculation of the second-order terms in the perturbation expansion of the Steklov eigenvalues in terms of the perturbation parameter $\vareps$. 
\subsection{Motivating Numerics}
In order to see Theorem \ref{thm:noballs} in action, we perform a numerical experiment with Steklov eigenvalues of area-normalized nearly-circular domains.  In Figures \ref{num:stekpert2} and \ref{num:stekpert8}, we plot the Steklov eigenvalues for domains $\Omega_\varepsilon$ of the form \eqref{eq:Domainintro} as a function of $\varepsilon$.  The unperturbed eigenvalues are the non-negative integers scaled by $\sqrt{\pi}$, each with multiplicity 2.  We see that by perturbing the $(n+\lceil \frac{n}{2}\rceil)$th Fourier mode of $\rho$, we can make both branches of the $n$th eigenvalue increasing functions of $\varepsilon$. For $n=2$ we perturb the 3rd Fourier mode and for for $n=8$ we perturb the 12th Fourier mode. We can see in figures \ref{fig:k=3} and \ref{fig:k=12} that both branches of the 2nd and 8th eigenvalues increase in $\vareps$ away from the disk.
\begin{figure}
\label{num:stekpert2}
    \centering
    \includegraphics[width=0.5\linewidth]{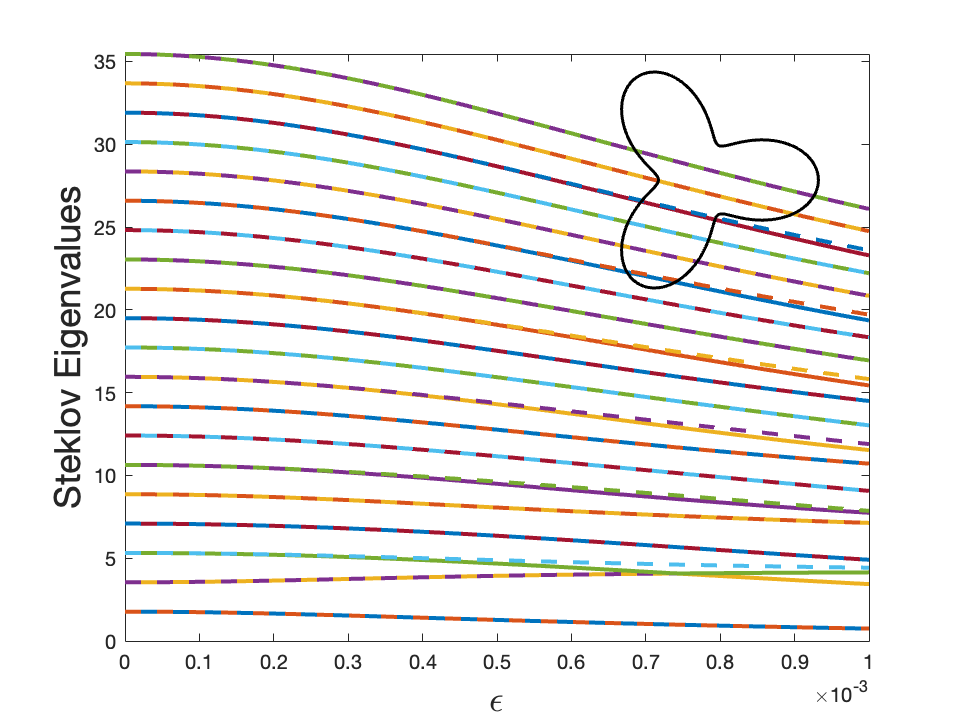}
    \caption{Eigenvalues on $\Omega_\vareps$ for $\rho(\theta) = 500\cos(3\theta)$}
    \label{fig:k=3}
\end{figure}
\begin{figure}
\label{num:stekpert8}
    \centering
    \includegraphics[width=0.5\linewidth]{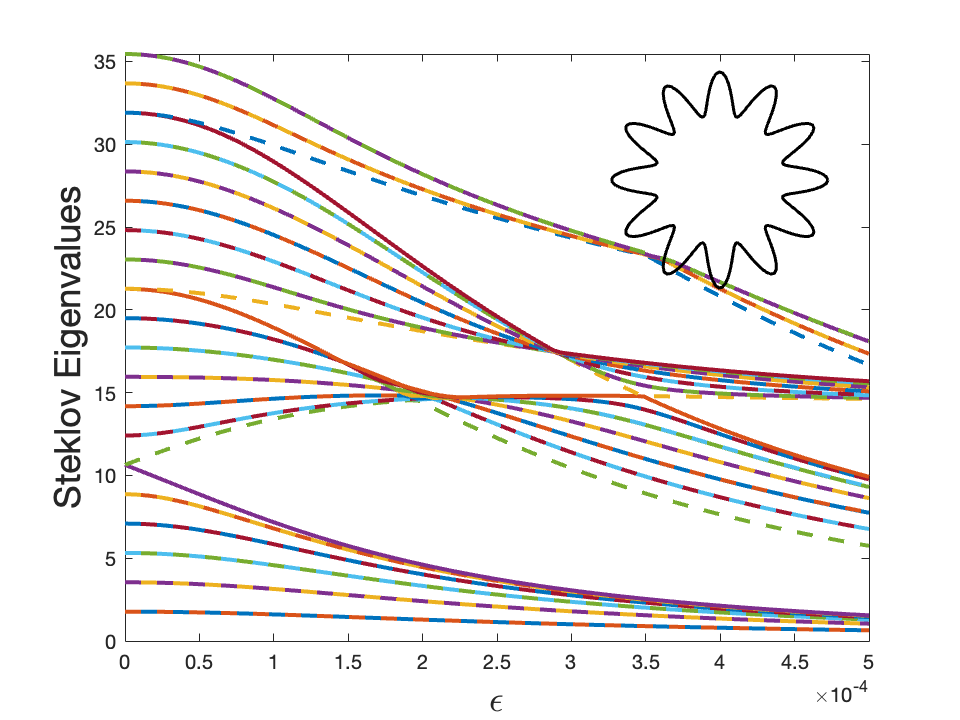}
    \caption{Eigenvalues on $\Omega_\vareps$ for $\rho(\theta) = 500\cos(12\theta)$}
    \label{fig:k=12}
\end{figure}

\subsection{Outline} 
This paper is organized as follows. We begin by reviewing polar coordinates  and compute the first-order and second-order asymptotic expansions for geometrical quantities related to $\Omega_\vareps$ in Section \ref{sec:prelim}, including the area-normalization constraint.  We also, in the same section, briefly discuss the analytic dependence of the Steklov eigenvalues on the parameter $\varepsilon$ for suitable functions $\rho$. In Section \ref{solnasymptotics}, we use the perturbative ansatz presented in Section \ref{sec:prelim} to derive the full asymptotic expansion for the Steklov eigenvalue problem on $\Omega_\varepsilon$.  In Section \ref{sec:asymptotic-first}, we derive the first-order asymptotic expansion for the Steklov eigenvalues of $\Omega_\vareps$. 
In Section \ref{sec:asymptotic-sec}, we obtain the second-order asymptotic expansion for Steklov eigenvalues of $\Omega_\vareps$, which we leverage to prove Theorem \ref{thm:noballs}. 

\section{Preliminaries} \label{sec:prelim} 
In this section, we first review polar coordinates, including the polar basis vectors $\hat{r}$ and $\hat{\theta}$. We then area-normalize the nearly-circular domain $\omega_\vareps$ to obtain a nearly circular domain $\Omega_\vareps$ of unit area, and we calculate the first- and second-order asymptotic expansions for the unit outward normal vector $\vec \nu_\vareps$ to $\partial \Omega_\vareps$ and powers of $R_\vareps$, the radius of $\partial \Omega_\vareps$.

\subsection{Polar coordinates in $\mathbb{R}^2$} \label{sec:Coordinates} 
We will work in the following moving basis for $\mathbb{R}^2$:
\begin{align}
\label{eq:movingBasis}
    \hat r = \begin{bmatrix}
        \cos \theta\\
        \sin \theta
    \end{bmatrix}\ \ \hat \theta = \begin{bmatrix}
        -\sin \theta\\
        \cos \theta
    \end{bmatrix}\text{ . } 
\end{align}
We record useful properties of this basis:
\begin{enumerate}
    \item $\frac{d}{d\theta} \hat r = \hat \theta$.
    \item $||\hat r|| = ||\hat \theta|| = 1$ and $\hat r \cdot \hat \theta = 0$.
    \item $(a\hat r + b\hat \theta) \cdot (c\hat r + d\hat \theta) = ac + bd$.
    \item For a differentiable $f: \R^2 \rightarrow \R$:
    \begin{align}
        \label{eq:polarGradient}
        \nabla f = \frac{\partial f}{\partial r}\hat r + \frac{1}{r}\frac{\partial f}{\partial \theta}\hat \theta\text{ . } 
    \end{align}
\end{enumerate}

We also recall the Steklov eigenvalue problem on the unit disk $$=D = \{ (r, \theta) : 0 \leq r \leq 1, 0 \leq \theta < 2\pi \}.$$  The problem \eqref{eq:Steklov1}, \eqref{eq:Steklov2} becomes
\begin{align}
    \frac{1}{r} \partial_r \left ( r \partial_r u \right ) + \frac{1}{r^2}\partial^2_{\theta} u & = 0  \text{ in } D^{\circ}\\
    \partial_r u |_{r=1} & = \lambda u    
\end{align}
The eigenvalues for this problem are the non-negative integers.  The eigenvalue $\lambda = 0$ is multiplicity one, with eigenspace given by constant functions defined on $D$.  For $\lambda = n >0$, the eigenvalue has multiplicity two: the eigenspace $E_n$ is given by $E_n = \text{span} \{r^n \operatorname{cos}(n\theta), r^n\operatorname{sin}(n\theta) \}$.

In the case of dilated disk $D_a= a D$ with $a>0$, the eigenspaces remain the same, but the corresponding eigenvalues $\tilde{\lambda}_n$ for the eigenspace $E_n$ satisfy the homothety property:  $\tilde{\lambda}_n = \frac{1}{a} \lambda_n = \frac{1}{a} n$ .

\subsection{Area-Normalized Domains}
We define the \textit{general nearly circular domain} $\omega_\vareps \subset \R^2$ in polar coordinated by
\begin{align}
    \label{eq:genDomain}
    \omega_\vareps = \{(r,\theta) \mid 0 \leq \theta < 2\pi, 0 \leq r \leq 1 +\vareps\rho(\theta)\}
\end{align}
where $\rho \in C^2(S^1)$ where $S^1$ denotes the unit circle in $\R^2$. $\rho$ may be expanded in the Fourier basis by
\begin{align}
    \label{eq:rhoFourier}
    \rho(\theta) = \sum_{j=0}^\infty a_j\sin(j\theta) + b_j\cos(j\theta)
\end{align}
We recall Cauchy's product for infinite sums,
\begin{align}
    \label{cauchyproduct}
    (\sum_{j=0}^\infty a_j)(\sum_{j=0}^\infty b_j) = \sum_{j=0}^\infty\sum_{i=0}^ja_ib_{j-i}\text{ , } 
\end{align}
to calculate a Fourier basis expansion for $\rho^2$ in terms of the expansion of $\rho$:
\begin{align}
    \label{eq:rhoSquaredFourier}
    \rho^2(\theta) =& \frac{1}{2}\sum_{j=0}^\infty\sum_{i=0}^j\Big[-a_ia_{j-i}(\cos(j\theta)-\cos((j-2i)\theta)) + a_ib_{j-i}(\sin(j\theta)-\sin((j-2i)\theta)) \\
    &+ b_ia_{j-i}(\sin(j\theta)+\sin((j-2i)\theta))+b_ib_{j-i}(\cos(j\theta)+\cos((j-2i)\theta))\Big]\nonumber\text{ . } 
\end{align}
Now we define the function $v(\vareps) = |\omega_\vareps|$ and explicitly calculate it:
\begin{align}
    v(\varepsilon) =& \int_0^{2\pi}\int_0^{1+\varepsilon\rho(\theta)} r dr d\theta\\
    =& \frac{1}{2}\int_0^{2\pi} 1 + 2\varepsilon\rho(\theta) +(\varepsilon\rho(\theta))^2 d\theta\nonumber\\
    =&\frac{1}{2}\Big[2\pi +4\pi\varepsilon b_0 + \frac{1}{2}\varepsilon^2\sum_{j=0}^\infty\sum_{i=0}^j\int_0^{2\pi}[-a_ia_{j-i}(\cos(j\theta)-\cos((j-2i)\theta))\nonumber\\
    &+ a_ib_{j-i}(\sin(j\theta)-\sin((j-2i)\theta)) \nonumber+ b_ia_{j-i}(\sin(j\theta)+\sin((j-2i)\theta))\nonumber\\
    &+b_ib_{j-i}(\cos(j\theta)+\cos((j-2i)\theta))]d\theta\Big] \nonumber\\
    =& \pi +2\pi\varepsilon b_0+\frac{\pi}{2}\varepsilon^2\sum_{j=0}^\infty\sum_{i=0}^j\Big[\delta_{j,0}(b_ib_{j-i}-a_ia_{j-i}) + \delta_{j-2i, 0}(a_ia_{j-i} + b_ib_{j-i})\Big]\nonumber\\
    =&\pi +2\pi\varepsilon b_0+\frac{\pi}{2}\varepsilon^2\Big[b_0^2-a_0^2 +\sum_{\text{even } j}^\infty a_{\frac{j}{2}}^2+b_{\frac{j}{2}}^2\Big] \nonumber\\
    =&\pi +2\pi\varepsilon b_0+\frac{\pi}{2}\varepsilon^2\Big[b_0^2 - a_0^2 +\sum_{j=0}^\infty a_{j}^2+b_{j}^2\Big]\nonumber\\
    \label{eq:vfunc}
    =&\pi +2\pi\varepsilon b_0+\frac{\pi}{2}\varepsilon^2\Big[2b_0^2 +\sum_{j=1}^\infty a_{j}^2+b_{j}^2\Big]\text{ . } 
\end{align}
So we may now define the \textit{area-normalized nearly circular domain}, $\Omega_\vareps$ by
\begin{align}
    \label{eq:Domain}
    \Omega_\vareps = \{(r,\theta) \mid 0 \leq \theta <2\pi, 0 \leq r \leq \frac{1+\vareps\rho(\theta)}{\sqrt{v(\vareps)}}\}\text{ , } 
\end{align}
and we see that it is indeed area-normalized:
\begin{align}
    |\Omega_\varepsilon| &= \int_0^{2\pi}\int_0^{\frac{1+\varepsilon\rho(\theta)}{\sqrt{v(\varepsilon)}}} rdrd\theta=\int_0^{2\pi}\frac{1}{2}(\frac{1+\varepsilon\rho(\theta)}{\sqrt{v(\varepsilon)}})^2 d\theta\\
    &= \frac{1}{v(\varepsilon)}\int_0^{2\pi}\frac{1}{2}(1+\varepsilon\rho(\theta))^2 d\theta=\frac{v(\varepsilon)}{v(\varepsilon)}= 1\text{ . }\nonumber 
\end{align}

Following the approach of \cite{viator2018}, \cite{viator2022}, we make the following perturbation ansatz for the solution $u_\varepsilon = u_\varepsilon(r,\theta )$ of \eqref{eq:Steklov1}, \eqref{eq:Steklov2}:

\begin{align}
\label{eq:ansatz1}
    u_\varepsilon= \sum_{k=0}^\infty r^k(A_k(\varepsilon)\sin(k\theta)+B_k(\varepsilon)\cos(k\theta)),
\end{align}
where the coefficients $A_k(\varepsilon), B_k(\varepsilon)$ are expanded in a power series in $\varepsilon$:
\begin{align}
\label{eq:ansatz2}
    A_k(\varepsilon) &= \delta_{k,n}\gamma_n + \mu_k\varepsilon + \eta_k\varepsilon^2 + O(\varepsilon^3)\\
\label{eq:ansatz3}
    B_k(\varepsilon) &= \delta_{k,n}\alpha_n + \beta_k\varepsilon + \sigma_k\varepsilon^2+ O(\varepsilon^3) \text{.}
\end{align} Here $n$ enumerates the eigenpairs, not counting multiplicity. For simplicity, we will always assume $n > 0$ since the $0$th eigenpair has trivial eigenvalue, always identically 0.

\subsection{Asymptotic expansions for geometric quantities} 
We calculate a first- and second-order asympotic expansion for the outward unit normal $\vec \nu_{\vareps}$ to $\partial \Omega_\vareps$.

We see we may parameterize $\partial \Omega_\varepsilon$ in $\theta$ by $\frac{1}{\sqrt{v(\varepsilon)}}(1+\varepsilon\rho(\theta))\hat{r}$. Thus we see the tangent vector to the boundary is 
\begin{align}
    \Vec{\tau}_\vareps = \frac{d}{d\theta} \frac{1}{\sqrt{v(\varepsilon)}}(1+\varepsilon\rho(\theta))\hat{r}= \frac{\varepsilon\rho^\prime(\theta)}{\sqrt{v(\varepsilon)}}\hat{r} + \frac{1+\varepsilon\rho(\theta)}{\sqrt{v(\varepsilon)}}\hat{\theta}\text{ . } 
\end{align}
Clearly $\tilde{\nu}_\vareps$ is normal to $\partial\Omega_\varepsilon$ when $\tilde{\nu}_\vareps\cdot\vec{\tau}_\vareps = 0$ so we find the normal vector 
\begin{align}
    \tilde{\nu}_\vareps = (\varepsilon\rho(\theta) +1)\hat{r} -\varepsilon\rho^\prime(\theta)\hat{\theta}\text{ . } 
\end{align}
Furthermore, we see that $\tilde{\nu}_0 = \hat{r}$ is the outward unit normal to $\Omega_0$. So we just need $\vec{\nu}_\vareps = \frac{\tilde{\nu}_\vareps}{||\tilde\nu_\vareps||}$. We find that the outward-facing unit normal is given by
\begin{align}
    \vec{\nu}_\vareps=\frac{(\varepsilon\rho(\theta)+1)\hat{r} -\varepsilon\rho^{\prime}(\theta)\hat{\theta}}{\sqrt{\varepsilon^{2}\rho^{2}(\theta)+2\varepsilon\rho(\theta)+1+\varepsilon^{2}{\rho^{\prime}}^{2}(\theta)}}\text{ . } 
\end{align}
By Taylor's Theorem we can find first- and second-order approximations of $\vec{{\nu}}_\vareps$ by evaluating $\frac{d}{d\varepsilon}\vec{{\nu}}_\varepsilon \mid_{\varepsilon = 0}$ and$\frac{d^2}{d\varepsilon^2}\vec{{\nu}}_\varepsilon \mid_{\varepsilon = 0}$  

By the product rule, $\frac{d\vec{\nu}_\varepsilon}{d\varepsilon}=(\frac{d}{d\varepsilon}\frac{1}{||\tilde{\nu}_\varepsilon||})\tilde{\nu}_\varepsilon + \frac{1}{||\tilde{\nu}_\varepsilon||}(\frac{d}{d\varepsilon}\tilde{\nu}_\varepsilon)$. We will calculate these derivatives separately:
\begin{align}
    \frac{d}{d\varepsilon}\frac{1}{||\tilde{\nu}_\varepsilon||} &= \frac{d}{d \varepsilon} (||\tilde\nu_\varepsilon||^2)^{-\frac{1}{2}}\\
    &=\frac{-1}{(2||\tilde\nu_\varepsilon||^2)^{\frac{3}{2}}}\frac{d}{d \varepsilon} (||\tilde\nu_\varepsilon||^2)\nonumber\\
    &=\frac{-1}{||\tilde\nu_\varepsilon||^3}(\frac{1}{2}\frac{d}{d \varepsilon} (||\tilde\nu_\varepsilon||^2))\nonumber\\
    &=\frac{-1}{||\tilde\nu_\varepsilon||^3}(\frac{1}{2}\frac{d}{d\varepsilon}(\varepsilon^{2}\rho^{2}(\theta)+2\varepsilon\rho(\theta)+1+\varepsilon^{2}{\rho^{\prime}}^{2}(\theta)))\nonumber\\
    &=\frac{-1}{||\tilde\nu_\varepsilon||^{3}}(\rho(\theta)+\rho^{2}(\theta)\varepsilon+\rho^{\prime^2}(\theta)\varepsilon)\\
\frac{d}{d\varepsilon}\tilde\nu_\varepsilon&=\frac{d}{d\varepsilon}(\varepsilon\rho(\theta)+1)\hat{r} -\varepsilon\rho^{\prime}(\theta)\hat{\theta}=\rho(\theta)\hat{r}-\rho^{\prime}(\theta)\hat{\theta}\text{ . } 
\end{align}
Applying the product rule now yields
\begin{align}
    \frac{d\vec{\nu}_\vareps}{d\varepsilon}=& \frac{\rho(\theta)\hat{r}-\rho^{\prime}(\theta)\hat{\theta}}{||\tilde\nu_\varepsilon||}-\frac{(\rho(\theta)+\rho^{2}(\theta)\varepsilon+\rho^{\prime^2}(\theta)\varepsilon)((\varepsilon\rho(\theta)+1)\hat{r} -\varepsilon\rho^{\prime}(\theta)\hat{\theta})}{||\tilde\nu_\varepsilon||^{3}}\\
    \label{eq:normalfirstderivative}
    =&\frac{1}{||\tilde\nu_\varepsilon||^3}[(||\tilde\nu_\varepsilon||^2\rho(\theta)-\rho(\theta)-\varepsilon\rho^{\prime^{2}}(\theta)-2\varepsilon\rho^{2}(\theta)-\varepsilon^{2}\rho^{3}(\theta)-\varepsilon^{2}\rho(\theta)\rho^{\prime^{2}}(\theta))\hat{r}\\
    &-(||\tilde\nu_\varepsilon||^2\rho^\prime(\theta)-\varepsilon\rho(\theta)\rho^{\prime}(\theta)-\varepsilon^{2}\rho^{2}(\theta)\rho^{\prime}(\theta)-\varepsilon^{2}\rho^{\prime^{3}}(\theta))\hat{\theta}\nonumber\text{ . } 
\end{align}
Using $\frac{d\vec{\nu}_\vareps}{d\varepsilon}$, we calculate $\frac{d^2\vec{\nu}_\vareps}{d\varepsilon^2}$. Let 
\begin{align}
    \vec\nu_{\vareps, \hat{r}}&=||\tilde\nu_\varepsilon||^2\rho(\theta)-\rho(\theta)-\varepsilon\rho^{\prime^{2}}(\theta)-2\varepsilon\rho^{2}(\theta)-\varepsilon^{2}\rho^{3}(\theta)-\varepsilon^{2}\rho(\theta)\rho^{\prime^{2}}(\theta)\\
    \vec\nu_{\vareps, \hat{\theta}}&=||\tilde\nu_\varepsilon||^2\rho^\prime(\theta)-\varepsilon\rho(\theta)\rho^{\prime}(\theta)-\varepsilon^{2}\rho^{2}(\theta)\rho^{\prime}(\theta)-\varepsilon^{2}\rho^{\prime^{3}}(\theta)\text{ . } 
\end{align}
Then,
\begin{align}
    \frac{d^2\vec{\nu}_\vareps}{d\varepsilon^2} &= \frac{d}{d\varepsilon}[\frac{1}{||\tilde\nu_\varepsilon|^{3}}(\vec\nu_{\vareps, \hat{r}}\hat{r} - \vec\nu_{\vareps, \hat{\theta}}\hat{\theta})]\\
    &= (\frac{d}{d\varepsilon}\frac{1}{||\tilde\nu_\varepsilon||^3})(\vec\nu_{\vareps, \hat{r}}\hat{r}-\vec\nu_{\vareps, \hat{\theta}}\hat{\theta})+\frac{1}{||\tilde\nu_\varepsilon||^3}(\frac{d}{d\varepsilon}[\vec\nu_{\vareps, \hat{r}}\hat{r}-\vec\nu_{\vareps, \hat{\theta}}\hat{\theta}])\text{ . } 
\end{align}

We calculate these derivatives separately:
\begin{align}
    \frac{d}{d\varepsilon}\frac{1}{||\tilde\nu_\varepsilon||^3} =& \frac{d}{d\varepsilon} (\frac{1}{||\tilde\nu_\varepsilon||})^3\\
    =&3(\frac{1}{||\tilde\nu_\varepsilon||})^2\frac{d}{d\varepsilon}(\frac{1}{||\tilde\nu_\varepsilon||})\nonumber\\
    =&\frac{3}{||\tilde\nu_\varepsilon||^2}\frac{-1}{||\tilde\nu_\varepsilon||^3}(\rho(\theta)+\rho^{2}(\theta)\varepsilon+\rho^{\prime^2}(\theta)\varepsilon)\nonumber\\
    =&\frac{-3}{||\tilde\nu_\varepsilon||^5}(\rho(\theta)+\rho^{2}(\theta)\varepsilon+\rho^{\prime^2}(\theta)\varepsilon)\\
    \frac{d}{d\varepsilon}[\tilde\nu_{\vareps, \hat{r}}\hat{r}-\tilde\nu_{\vareps, \hat{\theta}}\hat{\theta}] =&\Big((\frac{d}{d\varepsilon}||\tilde\nu_\varepsilon||^2)\rho(\theta)- {\rho^\prime}^2(\theta)-2\rho^2(\theta)-2\varepsilon\rho^3(\theta)-2\varepsilon\rho(\theta){\rho^\prime}^2(\theta)\Big)\hat{r}\\
    &- \Big((\frac{d}{d\varepsilon}||\tilde\nu_\varepsilon||^2)\rho^\prime(\theta) -\rho(\theta)\rho^\prime(\theta)-2\varepsilon\rho^2(\theta)\rho^\prime(\theta)-2\varepsilon{\rho^\prime}^3(\theta)\Big)\hat{\theta}\nonumber\\
    =&\Big((2\rho(\theta)+ 2\varepsilon\rho^2(\theta) + 2\varepsilon{\rho^\prime}^2(\theta)
    )\rho(\theta) - {\rho^\prime}^2(\theta)-2\rho^2(\theta)-2\varepsilon\rho^3(\theta)\\
    &-2\varepsilon\rho(\theta){\rho^\prime}^2(\theta)\Big)\hat{r}- \Big((2\rho(\theta) + 2\varepsilon\rho^2(\theta) + 2\varepsilon{\rho^\prime}^2(\theta)
    )\rho^\prime(\theta)-\rho(\theta)\rho^\prime(\theta)\nonumber\\&-2\varepsilon\rho^2(\theta)\rho^\prime(\theta)-2\varepsilon{\rho^\prime}^3(\theta)\Big)\hat{\theta}\nonumber\text{ . } 
\end{align}
Substituting into the product rule gives
\begin{align}
    \label{eq:normalsecondderivative}
    \frac{d^2\vec{\nu}_\vareps}{d\varepsilon^2} =& \frac{-3}{||\tilde\nu_\varepsilon||^5}(\rho(\theta)+\rho^{2}(\theta)\varepsilon+\rho^{\prime^2}(\theta)\varepsilon)(\tilde\nu_{\vareps, \hat{r}}\hat{r}-\tilde\nu_{\vareps, \hat{\theta}}\hat{\theta})+\frac{1}{||\tilde\nu_\varepsilon||^3}(((2\rho(\theta)+ 2\varepsilon\rho^2(\theta)\\
    &+ 2\varepsilon{\rho^\prime}^2(\theta)
    )\rho(\theta) - {\rho^\prime}^2(\theta)-2\rho^2(\theta)-2\varepsilon\rho^3(\theta)-2\varepsilon\rho(\theta){\rho^\prime}^2(\theta))\hat{r}- ((2\rho(\theta) + 2\varepsilon\rho^2(\theta)\nonumber\\
    &+ 2\varepsilon{\rho^\prime}^2(\theta)
    )\rho^\prime(\theta)-\rho(\theta)\rho^\prime(\theta)-2\varepsilon\rho^2(\theta)\rho^\prime(\theta)-2\varepsilon{\rho^\prime}^3(\theta))\hat{\theta})\nonumber\text{ . } 
\end{align}
Now, we evaluate (\ref{eq:normalfirstderivative}) and (\ref{eq:normalsecondderivative}) at $\vareps = 0$:
\begin{align}
    \frac{d\vec{\nu}_\vareps}{d\varepsilon}(0) &=-\rho^\prime(\theta)\hat{\theta}\\
    \frac{d^2\vec{\nu}_\vareps}{d\varepsilon^2}(0) &= -{\rho^\prime}^2(\theta)\hat{r} + 2\rho(\theta)\rho^\prime (\theta)\hat{\theta}
\end{align}
Thus, the outward-facing unit normal has the expansion
\begin{align}
    \label{eq:normalexpansion}
    \vec{\nu}_\vareps= \hat{r} - \rho^\prime(\theta)\hat{\theta}\varepsilon + \Big(-{\rho^\prime}^2(\theta)\hat{r} + 2\rho(\theta)\rho^\prime(\theta)\hat{\theta}\Big)\frac{\varepsilon^2}{2}+O(\varepsilon^3)\text{ . } 
\end{align}

We wish to find first- and second- order expansion of $R_\vareps^k$, where $R_\vareps$ is the radius of $\partial \Omega_\vareps$ and  $k \in \N$ is arbitrary. From (\ref{eq:Domain}) we have that
\begin{align}
    R_\vareps = \frac{1+\vareps\rho(\theta)}{\sqrt{v(\vareps)}}\text{ . } 
\end{align}
So,
\begin{align}
    \label{eq:unexpandedrkboundry}
    R_\vareps^k &= (1+\vareps\rho(\theta))^k(v(\vareps))^{-\frac{k}{2}}\text{ . } 
\end{align}
We will find our expansion for $R^k_\vareps$ by expanding each of these factors. By the binomial theorem we have
\begin{align}
    \label{eq:rkexpansionbinomial}
    (1+\varepsilon\rho(\theta))^k = 1+k\rho(\theta)\varepsilon +\frac{1}{2}k(k-1)\rho^2(\theta)\varepsilon^2 + O(\varepsilon^3)\text{ , } 
\end{align}
and by Taylor's theorem we have
\begin{align}
    (v(\vareps))^{-\frac{k}{2}}= v^{-\frac{k}{2}}(0) + \frac{d}{d\varepsilon}[v^{-\frac{k}{2}}(\varepsilon)](0)\varepsilon + \frac{1}{2}\frac{d^2}{d\varepsilon^2}[v^{-\frac{k}{2}}(\varepsilon)](0)\varepsilon^2 + O(\varepsilon^3)
\end{align} We thus compute:
\begin{align}
    v^{-\frac{k}{2}}(0) =& \frac{1}{\sqrt{\pi^k}}\\
    \frac{d}{d\varepsilon}[v^{-\frac{k}{2}}(\varepsilon)](0) =& \Big(-\frac{k}{2}(v(\varepsilon))^{-\frac{k+2}{2}}\frac{dv}{d\varepsilon}\Big)(0)\\
    =&\Big(-\frac{k}{2}v(\varepsilon)^{-\frac{k+2}{2}}(2\pi b_0 + \pi\varepsilon(2b_0^2+\sum_{j=1}^\infty a_j^2 + b_j^2))\Big)(0)\nonumber\\
    =&-\frac{k}{\sqrt{\pi^k}}b_0\\
    \frac{d^2}{d\varepsilon^2}[v^{-\frac{k}{2}}(\varepsilon)](0)=&\frac{d}{d\varepsilon}[-\frac{k}{2}v(\varepsilon)^{-\frac{k+2}{2}}(2\pi b_0 + \pi\varepsilon(2b_0^2+\sum_{j=1}^\infty a_j^2 + b_j^2)](0)\\
    =&(\frac{k(k+2)}{4}(v(\varepsilon))^{-\frac{k+4}{2}}(2\pi b_0 + \pi\varepsilon(2b_0^2+\sum_{j=1}^\infty a_j^2 + b_j^2)^2\nonumber\\
    &-\frac{k}{2}\pi(v(\varepsilon))^{-\frac{k+2}{2}}(2b_0^2+\sum_{j=1}^\infty a_j^2 + b_j^2))(0)\nonumber\\
    =&k\frac{(k+1)b_0^2-\frac{1}{2}\sum_{j=1}^\infty a_j^2 + b_j^2}{\sqrt{\pi^k}}\text{ . } 
\end{align}
So we have
\begin{align}
    \label{eq:rkexpansiontaylors}
    (v(\vareps))^{-\frac{k}{2}} &= \frac{1}{\sqrt{\pi^k}} -\frac{k}{\sqrt{\pi^k}}b_0\varepsilon + k\frac{(k+1)b_0^2-\frac{1}{2}\sum_{j=1}^\infty a_j^2 + b_j^2}{2\sqrt{\pi^k}}\varepsilon^2 + O(\varepsilon^3)\text{ . } 
\end{align}
We substitute (\ref{eq:rkexpansionbinomial}) and (\ref{eq:rkexpansiontaylors}) into (\ref{eq:unexpandedrkboundry}):
\begin{align}
    R_\vareps^k =& \Big(\frac{1}{\sqrt{\pi^k}} -\frac{k}{\sqrt{\pi^k}}b_0\varepsilon + k\frac{(k+1)b_0^2-\frac{1}{2}\sum_{j=1}^\infty a_j^2 + b_j^2}{2\sqrt{\pi^k}}\varepsilon^2\Big)\\
    &\cdot\Big(1+k\rho(\theta)\varepsilon +\frac{1}{2}k(k-1)\varepsilon^2\rho^2(\theta)\Big) + O(\varepsilon^3)\nonumber\\
    \label{rkbdry}
    =&\frac{1}{\sqrt{\pi^k}} + \frac{k}{\sqrt{\pi^k}}(\rho(\theta)-b_0)\varepsilon + \frac{k}{2\sqrt{\pi^k}}\Big((k-1)\rho^2(\theta)+(k+1)b_0^2\\
    &-\frac{1}{2}\sum_{j=1}^\infty [a_j^2+b_j^2]-2kb_0\rho(\theta)\Big)\varepsilon^2+O(\varepsilon^3)\nonumber\text{ . } 
\end{align}

\subsection{Analytic Dependence on $\varepsilon >0$}
  In \cite{viator2020}, the authors show that the perturbation of Steklov eigenvalues associated with domains of the form \eqref{eq:genDomain} is analytic in the parameter $\varepsilon$, or more specifically, that the Dirichlet-to-Neumann operator (DNO) is analytic in this parameter.  This was accomplished by introducing new coordinates
\begin{align}
\label{analyticcoords}
(r', \theta')= ((1+\varepsilon \rho (\theta))^{-1}r, \theta) \text{ , }
\end{align}
which transforms $\omega_\varepsilon$ to the disk and the Laplacian into a more sophisticated $\varepsilon$-dependent operator.  We then show that the harmonic extension of functions defined on $\partial \omega_\varepsilon$, which are also transformed under the above change of coordinates, depend analytically on $\varepsilon$, and then leverage the analyticity of harmonic extensions and formula for the normal derivative in the new coordinates to show that the DNO is analytic as well.

This process can be repeated for the perturbation problem for area-normalized domains \eqref{eq:Domain} presented in this paper.  Here, the corresponding change of coordinates would be
\begin{align}
\label{analyticcoordsnormal}
(r', \theta') = \Big( (\frac{\sqrt{v(\varepsilon)}}{1+\varepsilon \rho(\theta)} r, \theta \Big)\text{ . } 
\end{align}
Proceeding as in \cite{viator2020} section 2.1, we see that the term $\sqrt{v(\varepsilon)}$ comes out as a non-zero factor in the tranformed Laplacian, which can be divided from both sides of the homogeneous equation which the harmonic extension must satisfy.  The analyticity of the harmonic extensions then follows in the same manner as \cite{viator2020}.  

To prove the analyticity of the DNO, the authors of \cite{viator2020} again work in the tranformed coordinates \eqref{analyticcoords}, obtaining the following representation of the DNO $G_{\rho, \varepsilon}$:
\begin{align*}
G_{\rho,\vareps} \varphi & = M_{\rho}(\vareps)\left [ \left ( 1 + \frac{\vareps^2(\rho')^2}{(1+\vareps \rho)^2} \right )\partial_r u_\vareps - \frac{\vareps \rho'}{1+\vareps\rho}\partial_\theta u_\vareps \right ] 
\end{align*}
where $M_\rho(\varepsilon)$ is an analytic real-valued function of $\varepsilon$, and $u_\vareps$ is the harmonic extension of $\varphi$ into the perturbed (non-area-normalized) domain.  Applying the same method to the domain \eqref{eq:Domain} and the cooresponding change of coordinates \eqref{analyticcoordsnormal}, it is easy to see that 
\begin{align*}
G_{\rho,\vareps} \varphi & = M_{\rho}(\vareps)\sqrt{v(\vareps)}\left [ \left ( 1 + \frac{\vareps^2(\rho')^2}{(1+\vareps \rho)^2} \right )\partial_r u_\vareps - \frac{\vareps \rho'}{1+\vareps\rho}\partial_\theta u_\vareps \right ] \\
& = \tilde{M}_\rho(\varepsilon)\left [ \left ( 1 + \frac{\vareps^2(\rho')^2}{(1+\vareps \rho)^2} \right )\partial_r u_\vareps - \frac{\vareps \rho'}{1+\vareps\rho}\partial_\theta u_\vareps \right ] 
\end{align*}
where $\tilde{M}_{\rho}(\vareps) = \sqrt{v(\varepsilon)}M_{\rho}(\vareps)$.  Since $\sqrt{v(\varepsilon)}$ is an analytic function of $\varepsilon$ in a neighborhood of $\varepsilon=0$, the analyticity of the Dirichlet-to-Neumann operator associated with domains \eqref{eq:Domain} in the parameter $\varepsilon$, and hence the analyticity of the Steklov eigenvalues of $\Omega_\varepsilon$, follows.


\section{An Asymptotic Expansion for Steklov Eigenvalues of Nearly Circular Domains} 
\label{solnasymptotics}
In this section, we derive the asymptotic expansion for the Steklov eigenvalues $\lambda(\vareps)\coloneqq \lambda^\vareps$ on a volume-normalized nearly-circular domain $\Omega_\vareps$ of the form \eqref{eq:Domain}. Recall that the unperturbed eigenvalues of $\Omega_0 = \frac{1}{\sqrt{\pi}}D$ are $\ell \sqrt{\pi}$ for $\ell\in\N$, with corresponding eigenfunctions $r^\ell \operatorname{cos}(\ell \theta)$ and $r^\ell \operatorname{sin}(\ell \theta)$.

\subsection{Left Hand Side ($\partial_{\vec{\nu}}\ u$)}

Recall, $\partial_{\vec{\nu}}\ u = \nabla u \cdot \vec{\nu}_\vareps$.

We first calculate $\nabla u$, substituting our expansion \eqref{eq:ansatz1} into $\eqref{eq:polarGradient}$:
\begin{align}
    \nabla u =& \frac{\partial}{\partial r}[\sum_{k=0}^\infty r^k(A_k(\varepsilon)\sin(k\theta)+B_k(\varepsilon)\cos(k\theta))]\hat{r} \nonumber \\
    &+\frac{1}{r}\frac{\partial}{\partial \theta}[\sum_{k=0}^\infty r^k(A_k(\varepsilon)\sin(k\theta)+B_k(\varepsilon)\cos(k\theta))]\hat{\theta} \nonumber \\
    =&\sum_{k=0}^\infty kr^{k-1}[(A_k(\varepsilon)\sin(k\theta)+B_k(\varepsilon)\cos(k\theta))\hat{r} + (A_k(\varepsilon)\cos(k\theta)-B_k(\varepsilon)\sin(k\theta))\hat{\theta}]\text{ . } 
\end{align}
Recalling (\ref{eq:normalexpansion}) we calculate $\partial_{\vec{\nu}}\ u$:
\begin{align}
    \partial_{\vec{\nu}}\ u=& \sum_{k=0}^\infty kr^{k-1}[(A_k(\varepsilon)\sin(k\theta)+B_k(\varepsilon)\cos(k\theta))\hat{r} + (A_k(\varepsilon)\cos(k\theta)-B_k(\varepsilon)\sin(k\theta))\hat{\theta}]\\
    &\cdot (\hat{r} - \rho^\prime(\theta)\hat{\theta}\varepsilon + (-(\rho^\prime(\theta))^2\hat{r} + 2\rho(\theta)\rho^\prime(\theta) \hat{\theta})\frac{\varepsilon^2}{2}+O(\varepsilon^3))\nonumber\\
    =&\sum_{k=0}^\infty kr^{k-1}[(A_k(\varepsilon)\sin(k\theta)+B_k(\varepsilon)\cos(k\theta))\hat{r} + (A_k(\varepsilon)\cos(k\theta)-B_k(\varepsilon)\sin(k\theta))\hat{\theta}]\nonumber\\
    &\cdot ((1-(\rho^\prime(\theta))^2\frac{\varepsilon^2}{2})\hat{r}+ (\rho(\theta)\rho^\prime(\theta)\varepsilon^2- \rho^\prime(\theta)\varepsilon)\hat{\theta}+O(\varepsilon^3))\nonumber\\
    =& \sum_{k=0}^\infty kr^{k-1}[(A_k(\varepsilon)\sin(k\theta)+B_k(\varepsilon)\cos(k\theta))(1-(\rho^\prime(\theta))^2\frac{\varepsilon^2}{2})\\
    &+ (A_k(\varepsilon)\cos(k\theta)-B_k(\varepsilon)\sin(k\theta))(\rho(\theta)\rho^\prime(\theta)\varepsilon^2- \rho^\prime(\theta)\varepsilon)] + O(\varepsilon^3)\nonumber\text{ . } 
\end{align}
Recalling \eqref{eq:ansatz2} and \eqref{eq:ansatz3}, we group by powers of epsilon:
\begin{align}
     \partial_{\vec{\nu}}\ u=&\sum_{k=0}^\infty kr^{k-1}[((\delta_{k,n}\gamma_n + \mu_k\varepsilon + \eta_k\varepsilon^2)\sin(k\theta)+(\delta_{k,n}\alpha_n + \beta_k\varepsilon + \sigma_k\varepsilon^2)\cos(k\theta))\\
     &\cdot(1-(\rho^\prime(\theta))^2\frac{\varepsilon^2}{2})+ ((\delta_{k,n}\gamma_n + \mu_k\varepsilon + \eta_k\varepsilon^2)\cos(k\theta)-(\delta_{k,n}\alpha_n + \beta_k\varepsilon + \sigma_k\varepsilon^2)\sin(k\theta)) \nonumber \\
     &\cdot(\rho(\theta)\rho^\prime(\theta)\varepsilon^2- \rho^\prime(\theta)\varepsilon)] + O(\varepsilon^3) \nonumber \\
     \label{dnuprerk}
     =&\sum_{k=0}^\infty kr^{k-1}[(\delta_{k,n}\alpha_n\cos(k\theta)+\delta_{k,n}\gamma_n\sin(k\theta))  \\ &+ \Big (\rho^\prime(\theta)\delta_{k,n}\alpha_n\sin(k\theta)
     - \rho^\prime(\theta)\delta_{k,n}\gamma_n\cos(k\theta) + \beta_k\cos(k\theta) + \mu_k\sin(k\theta) \Big )\varepsilon \nonumber \\ & + \Big (-\frac{1}{2}(\rho^\prime(\theta))^2\delta_{k,n}\alpha_n\cos(k\theta)
     -\frac{1}{2}(\rho^\prime(\theta))^2\delta_{k,n}\gamma_n\sin(k\theta) + \rho^\prime(\theta)\beta_k\sin(k\theta) \nonumber \\ & - \rho^\prime(\theta)\mu_k\cos(k\theta) -\rho(\theta)\rho^\prime(\theta)\delta_{k,n}\alpha_n\sin(k\theta)+\rho(\theta)\rho^\prime(\theta)\delta_{k,n}\gamma_n\cos(k\theta) \nonumber \\ &+ \sigma_k\cos(k\theta) + \eta_k\sin(k\theta) \Big )\varepsilon^2] \nonumber \\
     &+ O(\varepsilon^3) \nonumber\text{ . } 
\end{align}
Since the normal derivative is taken on $\partial \Omega_\vareps$, we substitute \eqref{rkbdry} into \eqref{dnuprerk} to see:
\begin{align}
     \partial_{\vec{\nu}}\ u= &\sum_{k=0}^\infty \frac{k}{\sqrt{\pi^{k-1}}}(1 + (k-1)(\rho(\theta)-b_0)\varepsilon + \frac{k-1}{2}((k-2)\rho^2(\theta)+kb_0^2\\
     &-\frac{1}{2}\sum_{j=1}^\infty[a_j^2+b_j^2]-2(k-1)b_0\rho(\theta))\varepsilon^2)[(\delta_{k,n}\alpha_n\cos(k\theta)+\delta_{k,n}\gamma_n\sin(k\theta)) \nonumber \\
     &+ (\rho^\prime(\theta)\delta_{k,n}\alpha_n\sin(k\theta) - \rho^\prime(\theta)\delta_{k,n}\gamma_n\cos(k\theta) + \beta_k\cos(k\theta) + \mu_k\sin(k\theta))\varepsilon \nonumber  \\
     &+ (-\frac{1}{2}(\rho^\prime(\theta))^2\delta_{k,n}\alpha_n\cos(k\theta) -\frac{1}{2}(\rho^\prime(\theta))^2\delta_{k,n}\gamma_n\sin(k\theta) + \rho^\prime(\theta)\beta_k\sin(k\theta) \nonumber \\
     &-\rho^\prime(\theta)\mu_k\cos(k\theta)-\rho(\theta)\rho^\prime(\theta)\delta_{k,n}\alpha_n\sin(k\theta)+\rho(\theta)\rho^\prime(\theta)\delta_{k,n}\gamma_n\cos(k\theta) \nonumber \\
     &+ \sigma_k\cos(k\theta) + \eta_k\sin(k\theta))\varepsilon^2]+ O(\varepsilon^3) \nonumber \\
     \label{dnuexpanded}
     =&\sum_{k=0}^\infty \frac{k}{\sqrt{\pi^{k-1}}}[(\delta_{k,n}\alpha_n\cos(k\theta) + \delta_{k,n}\gamma_n\sin(k\theta)) \\ 
     &+ \Big ( (k-1)(\rho(\theta)-b_0)(\delta_{k,n}\alpha_n\cos(k\theta) \delta_{k,n}\gamma_n\sin(k\theta)) + \beta_k\cos(k\theta) + \mu_k\sin(k\theta) \nonumber \\ &+\rho^\prime(\theta)\delta_{k,n}\alpha_n\sin(k\theta) -\rho^\prime(\theta)\delta_{k,n}\gamma_n\cos(k\theta) \Big )\varepsilon \nonumber \\ 
     & + \Big ( \frac{k-1}{2}((k-2)\rho^2(\theta) + kb_0^2-\frac{1}{2}\sum_{j=1}^\infty[a_j^2 + b_j^2]  \nonumber \\
 & -2(k-1)b_0\rho(\theta))(\delta_{k,n}\alpha_n\cos(k\theta) + \delta_{k,n}\gamma_n\sin(k\theta))  -\frac{1}{2}(\rho^\prime(\theta))^2\delta_{k,n}\alpha_n\cos(k\theta) \nonumber \\ &-\frac{1}{2}(\rho^\prime(\theta))^2\delta_{k,n}\gamma_n\sin(k\theta) + \rho^\prime(\theta)\beta_k\sin(k\theta)-\rho^\prime(\theta)\mu_k\cos(k\theta)  \nonumber \\
 & -\rho(\theta)\rho^\prime(\theta)\delta_{k,n}\alpha_n\sin(k\theta) 
     +\rho(\theta)\rho^\prime(\theta)\delta_{k,n}\gamma_n\cos(k\theta)+ \sigma_k\cos(k\theta) + \eta_k\sin(k\theta) \nonumber \\ 
     & + (k-1)(\rho(\theta)-b_0)(\beta_k\cos(k\theta) +\mu_k\sin(k\theta) + \rho^\prime(\theta)\delta_{k,n}\alpha_n\sin(k\theta) \nonumber \\ & - \rho^\prime(\theta)\delta_{k,n}\gamma_n\cos(k\theta)) \Big )\varepsilon^2] \nonumber \\ & + O(\varepsilon^3) \nonumber \text{ . } 
\end{align}
\subsection{Right Hand Side ($\lambda u$)}
Recall that $\lambda_n$, the $n$th eigenvalue (not counting multiplicity), is analytic in the parameter $\varepsilon$. Thus we form the power series expansion
\begin{align}
    \lambda_n(\varepsilon) = \lambda_n^{(0)} + \lambda_n^{(1)}\varepsilon + \lambda_n^{(2)}\varepsilon^2 + O(\varepsilon^3)
\end{align}
with which, after using expansions \eqref{eq:ansatz1}, \eqref{eq:ansatz2}, \eqref{eq:ansatz3} and grouping by powers of $\varepsilon$, we find
\begin{align}
    \lambda u =& (\lambda_n^{(0)} + \lambda_n^{(1)}\varepsilon + \lambda_n^{(2)}\varepsilon^2)\sum_{k=0}^\infty r^k[(\delta_{k,n}\gamma_n + \mu_k\varepsilon + \eta_k\varepsilon^2)\sin(k\theta)+(\delta_{k,n}\alpha_n + \beta_k\varepsilon\\
    &+ \sigma_k\varepsilon^2)\cos(k\theta)] + O(\varepsilon^3) \nonumber \\
    \label{prelambdau}
    =&\sum_{k=0}^\infty r^k[\lambda_n^{(0)}(\delta_{k,n}\alpha_n\cos(k\theta)+\delta_{k,n}\gamma_n\sin(k\theta)) + (\lambda_n^{(0)}(\beta_k\cos(k\theta)+\mu_k\sin(k\theta)) \\
    &+ \lambda_n^{(1)}(\delta_{k,n}\alpha_n\cos(k\theta)+\delta_{k,n}\gamma_n\sin(k\theta)))\varepsilon + (\lambda_n^{(0)}(\sigma_k\cos(k\theta) + \eta_k\sin(k\theta)) \nonumber\\
    &+ \lambda_n^{(1)}(\beta_k\cos(k\theta)+\mu_k\sin(k\theta)) + \lambda_n^{(2)}(\delta_{k,n}\alpha_n\cos(k\theta)+\delta_{k,n}\gamma_n\sin(k\theta)))\varepsilon^2]+ O(\varepsilon^3) \nonumber\text{ . } 
\end{align} 
Since the eigenvalue condition is on $\partial \Omega_\vareps$, we substitute \eqref{rkbdry} into \eqref{prelambdau} to obtain
\begin{align}
    \lambda u =& \sum_{k=0}^\infty \frac{1}{\sqrt{\pi^k}}(1 + k(\rho(\theta) - b_0)\varepsilon + \frac{k}{2}((k-1)\rho^2(\theta)+(k+1)b_0^2-\frac{1}{2}\sum_{j=1}^\infty[a_j^2+b_j^2]\\
    &-2kb_0\rho(\theta))\varepsilon^2)[\lambda_n^{(0)}(\delta_{k,n}\alpha_n\cos(k\theta)+\delta_{k,n}\gamma_n\sin(k\theta)) + \Big ( \lambda_n^{(0)}(\beta_k\cos(k\theta) \nonumber \\
    &+\mu_k\sin(k\theta))+ \lambda_n^{(1)}(\delta_{k,n}\alpha_n\cos(k\theta)+\delta_{k,n}\gamma_n\sin(k\theta))\Big )\varepsilon + \Big ( \lambda_n^{(0)}(\sigma_k\cos(k\theta) + \eta_k\sin(k\theta)) \nonumber  \\
    &+ \lambda_n^{(1)}(\beta_k\cos(k\theta)+\mu_k\sin(k\theta)) + \lambda_n^{(2)}(\delta_{k,n}\alpha_n\cos(k\theta)+\delta_{k,n}\gamma_n\sin(k\theta))\Big )\varepsilon^2]+ O(\varepsilon^3) \nonumber \\
    \label{lambdaupresub}
    =&\sum_{k=0}^\infty \frac{1}{\sqrt{\pi^k}}[\lambda_n^{(0)}(\delta_{k,n}\alpha_n\cos(k\theta)+\delta_{k,n}\gamma_n\sin(k\theta)) \\  & + \Big (\lambda_n^{(0)}(\beta_k\cos(k\theta) + \mu_k\sin(k\theta))
    + \lambda_n^{(1)}(\delta_{k,n}\alpha_n\cos(k\theta)+\delta_{k,n}\gamma_n\sin(k\theta))\nonumber \\ & + k\lambda_n^{(0)}(\rho(\theta)-b_0)(\delta_{k,n}\alpha_n\cos(k\theta) +\delta_{k,n}\gamma_n\sin(k\theta)) \Big )\varepsilon \nonumber \\ 
    & + \Big ( \lambda_n^{(0)}(\sigma_k\cos(k\theta)+\eta_k\sin(k\theta)) + \lambda_n^{(1)}(\beta_k\cos(k\theta)+\mu_k\sin(k\theta)) \nonumber \\
    &+ \lambda_n^{(2)}(\delta_{k,n}\alpha_n\cos(k\theta)+\delta_{k,n}\gamma_n\sin(k\theta)) + \lambda_n^{(0)}\frac{k}{2}((k-1)\rho^2(\theta)+(k+1)b_0^2 \nonumber \\
    &-\frac{1}{2}\sum_{j=1}^\infty[a_j^2+b_j^2]-2kb_0\rho(\theta))(\delta_{k,n}\alpha_n\cos(k\theta)+\delta_{k,n}\gamma_n\sin(k\theta))\nonumber \\ 
    &+k(\rho(\theta)-b_0) \cdot(\lambda_n^{(0)}(\beta_k\cos(k\theta)\mu_k\sin(k\theta))+ \lambda_n^{(1)}(\delta_{k,n}\alpha_n\cos(k\theta)+\delta_{k,n}\gamma_n\sin(k\theta))) \Big )\varepsilon^2] \nonumber \\ &+ O(\varepsilon^3) \nonumber \text{ . } 
\end{align}
\subsection{Leading order terms} Equating the leading order terms in \eqref{dnuexpanded} and \eqref{lambdaupresub} gives
\begin{align}
    \sum_{k=0}^\infty \frac{k}{\sqrt{\pi^{k-1}}}(\delta_{k,n}\alpha_n\cos(n\theta) + \delta_{k_n}\gamma_n\sin(n\theta)) = \sum_{k=0}^\infty\frac{\lambda_n^{(0)}}{\sqrt{\pi^{k}}}(\delta_{k,n}\alpha_n\cos(n\theta) + \delta_{k_n}\gamma_n\sin(n\theta))\text{ . }
\end{align}
Then, collapsing the sum using $\delta_{k,n} = \begin{cases}
    0 & k\neq n\\
    1 & k=n
\end{cases}$ obtains:
\begin{align}
    n\sqrt{\pi}(\alpha_n\cos(n\theta) + \gamma_n\sin(n\theta)) = \lambda_n^{(0)}(\alpha_n\cos(n\theta) + \gamma_n\sin(n\theta))\text{ . }
\end{align}
which recovers $\lambda_n^{(0)} = n\sqrt{\pi}$, the solution of the Steklov problem on $\Omega_0=\frac{1}{\sqrt{\pi}}D$ from Section \ref{sec:prelim}. We make this substitution into \eqref{lambdaupresub} to obtain: 
\begin{align}
\label{lambdauexpanded}
    \lambda u = 
    &\sum_{k=0}^\infty \frac{1}{\sqrt{\pi^k}}[n\sqrt{\pi}(\delta_{k,n}\alpha_n\cos(k\theta)+\delta_{k,n}\gamma_n\sin(k\theta)) \\
    &+ \Big (n\sqrt{\pi}(\beta_k\cos(k\theta) + \mu_k\sin(k\theta))
    + \lambda_n^{(1)}(\delta_{k,n}\alpha_n\cos(k\theta)+\delta_{k,n}\gamma_n\sin(k\theta)) \nonumber \\  
    & + kn\sqrt{\pi}(\rho(\theta)-b_0)(\delta_{k,n}\alpha_n\cos(k\theta )
    +\delta_{k,n}\gamma_n\sin(k\theta)) \Big )\varepsilon  \nonumber \\
    &+ \Big (n\sqrt{\pi}(\sigma_k\cos(k\theta)+\eta_k\sin(k\theta)) + \lambda_n^{(1)}(\beta_k\cos(k\theta)+\mu_k\sin(k\theta)) \nonumber \\
    &+ \lambda_n^{(2)}(\delta_{k,n}\alpha_n\cos(k\theta)+\delta_{k,n}\gamma_n\sin(k\theta)) + n\sqrt{\pi}\frac{k}{2}((k-1)\rho^2(\theta)+(k+1)b_0^2 \nonumber \\
    &-\frac{1}{2}\sum_{j=1}^\infty[a_j^2+b_j^2]-2kb_0\rho(\theta))(\delta_{k,n}\alpha_n\cos(k\theta)+\delta_{k,n}\gamma_n\sin(k\theta)) \nonumber \\
    & +k(\rho(\theta)-b_0)\cdot(n\sqrt{\pi}(\beta_k\cos(k\theta)+\mu_k\sin(k\theta))+ \\ &\lambda_n^{(1)}(\delta_{k,n}\alpha_n\cos(k\theta)+\delta_{k,n}\gamma_n\sin(k\theta))) \Big )\varepsilon^2] \nonumber \\ &+ O(\varepsilon^3) \nonumber\text{ . } 
\end{align}

\section{First Order}
\label{sec:asymptotic-first} 
\subsection{Forming $M_n^{(1)}$}
\label{formingmn1}
We equate the order-$\varepsilon$ terms of \eqref{dnuexpanded} and \eqref{lambdauexpanded}:
\begin{align}
    &\sum_{k=0}^\infty \frac{k}{\sqrt{\pi^{k-1}}}((k-1)(\rho(\theta)-b_0)(\delta_{k,n}\alpha_n\cos(k\theta) + \delta_{k,n}\gamma_n\sin(k\theta)) + \beta_k\cos(k\theta)\\
    &+ \mu_k\sin(k\theta) +\rho^\prime(\theta)\delta_{k,n}\alpha_n\sin(k\theta) -\rho^\prime(\theta)\delta_{k,n}\gamma_n\cos(k\theta)) \nonumber\\
    =&\sum_{k=0}^\infty \frac{1}{\sqrt{\pi^k}}(n\sqrt{\pi}(\beta_k\cos(k\theta) + \mu_k\sin(k\theta)) + \lambda_n^{(1)}(\delta_{k,n}\alpha_n\cos(k\theta)+\delta_{k,n}\gamma_n\sin(k\theta))\\
    &+ kn\sqrt{\pi}(\rho(\theta)-b_0)(\delta_{k,n}\alpha_n\cos(k\theta)+\delta_{k,n}\gamma_n\sin(k\theta)))\nonumber
\end{align}
which after rearranging terms yields the equation
\begin{align}
\label{firstordersummation}
    &\sum_{k=0}^\infty \frac{1}{\sqrt{\pi^k}}\lambda_n^{(1)}(\delta_{k,n}\alpha_n\cos(k\theta)+\delta_{k,n}\gamma_n\sin(k\theta)) \\
    & =\sum_{k=0}^\infty[ \frac{k}{\sqrt{\pi^{k-1}}}((k-1-n)(\rho(\theta)-b_0)(\delta_{k,n}\alpha_n\cos(k\theta) + \delta_{k,n}\gamma_n\sin(k\theta)) \nonumber \\ 
    &+ (1 - \frac{n}{k}) (\beta_k\cos(k\theta) + \mu_k\sin(k\theta)) 
    +\rho^\prime(\theta)\delta_{k,n}(\alpha_n\sin(k\theta) -\gamma_n\cos(k\theta))] \nonumber\text{ . } 
\end{align}
We recall that $\delta_{k,n} = \begin{cases}
    0 & k\neq n\\
    1 & k=n
\end{cases}$ to pull terms out of sums where possible:
\begin{align}
\label{orderepscollapsed}
    \frac{\lambda_n^{(1)}}{\sqrt{\pi^n}}(\alpha_n\cos(n\theta)+\gamma_n\sin(n\theta)) =& \frac{1}{\sqrt{\pi^{n-1}}}[-n\rho(\theta)(\alpha_n\cos(n\theta) + \gamma_n\sin(n\theta))\\
    &+n\rho^\prime(\theta)(\alpha_n\sin(n\theta)- \gamma_n\cos(n\theta))+nb_0(\alpha_n\cos(n\theta) \nonumber \\
    &+ \gamma_n\sin(n\theta))]+ \sum_{k=0}^\infty \frac{1}{\sqrt{\pi^{k-1}}}[k(\beta_k\cos(k\theta) + \mu_k\sin(k\theta)) \nonumber \\
    &-n(\beta_k\cos(k\theta) + \mu_k\sin(k\theta))] \nonumber\text{ . } 
\end{align}
From here we may generate two equations using ``Fourier's Trick": we recall that, from the orthogonality of sines and cosines, we have, $\forall i \in \N\  \forall j \in \N\setminus\{0\}$
\begin{align}
    \label{sinecosine}
    \int_0^{2\pi}\sin(i\theta)\cos(j\theta)d\theta &= 0\\
    \label{sinesinecosinecosine}
    \int_0^{2\pi}\sin(i\theta)\sin(j\theta) d\theta = \int_0^{2\pi}\cos(i\theta)\cos(j\theta) d\theta &= \begin{cases}
    0 & i \neq j\\
    \pi & i = j
\end{cases}\text{ . } 
\end{align}
Thus, we first multiply \eqref{orderepscollapsed} by $\cos(n\theta)$ and integrate both sides from $0$ to $2\pi$ to obtain
\begin{align}
    \frac{1}{\sqrt{\pi^{n-2}}}\lambda_n^{(1)}\alpha_n =& \frac{1}{\sqrt{\pi^{n-1}}}\Big[\int_0^{2\pi} n\rho^\prime(\theta)\sin(n\theta)\cos(n\theta) -n\rho(\theta)\cos(n\theta)\cos(n\theta) d\theta + n\pi b_0\Big]\alpha_n\\
    &+ \frac{1}{\sqrt{\pi^{n-1}}}\Big[\int_0^{2\pi} -n\rho^\prime(\theta)\cos(n\theta)\cos(n\theta) - n\rho(\theta)\sin(n\theta)\cos(n\theta)d\theta\Big]\gamma_n \nonumber \\
    &+\frac{1}{\sqrt{\pi^{n-1}}}[n\pi\beta_n - n\pi\beta_n] \text{ . } \nonumber
\end{align}
That is,
\begin{align}
    \label{lambda1cosintegrals}
    \lambda_n^{(1)}\alpha_n =& \Big (\frac{n}{\sqrt{\pi}}\int_0^{2\pi} \rho^\prime(\theta)\sin(n\theta)\cos(n\theta) -\rho(\theta)\cos^2(n\theta) d\theta + n\sqrt{\pi} b_0 \Big )\alpha_n \\
    &+ \Big ( \frac{n}{\sqrt{\pi}}\int_0^{2\pi} -\rho^\prime(\theta)\cos^2(n\theta) - \rho(\theta)\sin(n\theta)\cos(n\theta)d\theta \Big )\gamma_n \nonumber\text{ . } 
\end{align}
Similarly, multiplying \eqref{orderepscollapsed} by $\sin(n\theta)$ and integrating reveals
\begin{align}
    \label{lambdasinintegrals}
    \lambda_n^{(1)}\gamma_n =& \Big(\frac{n}{\sqrt{\pi}}\int_0^{2\pi}\rho^\prime(\theta)\sin^2(n\theta) -\rho(\theta)\cos(n\theta)\sin(n\theta) d\theta \Big )\alpha_n\\
    &+ \Big (\frac{n}{\sqrt{\pi}}\int_0^{2\pi} -\rho^\prime(\theta)\cos(n\theta)\sin(n\theta)-\rho(\theta)\sin^2(n\theta)d\theta + n\sqrt{\pi} b_0 \Big )\gamma_n \nonumber\text{ . } 
\end{align}
So we see that
\begin{align}
    \lambda_n^{(1)}\begin{bmatrix}
        \alpha_n\\
        \gamma_n
    \end{bmatrix} = M_n^{(1)} \begin{bmatrix}
        \alpha_n\\
        \gamma_n
    \end{bmatrix}\text{ . } 
\end{align} i.e. $\lambda_n^{(1)}$ must be an eigenvalue of the matrix $M_n^{(1)} = \begin{bmatrix}
    m_{n,\,(1,1)}^{(1)} & m_{n,\,(1,2)}^{(1)}\\
    m_{n,\,(2,1)}^{(1)} & m_{n,\,(2,2)}^{(1)}
\end{bmatrix}$ given by
\begin{align}
    m_{n,\,(1,1)}^{(1)} &= \frac{n}{\sqrt{\pi}}\int_0^{2\pi} \rho^\prime(\theta)\sin(n\theta)\cos(n\theta) -\rho(\theta)\cos^2(n\theta) d\theta + n\sqrt{\pi} b_0\\
    m_{n,\,(1,2)}^{(1)} &= \frac{n}{\sqrt{\pi}}\int_0^{2\pi} -\rho^\prime(\theta)\cos^2(n\theta) - \rho(\theta)\sin(n\theta)\cos(n\theta)d\theta\\
    m_{n,\,(2,1)}^{(1)} &= \frac{n}{\sqrt{\pi}}\int_0^{2\pi}\rho^\prime(\theta)\sin^2(n\theta) -\rho(\theta)\cos(n\theta)\sin(n\theta) d\theta\\
    m_{n,\,(2,2)}^{(1)} &= \frac{n}{\sqrt{\pi}}\int_0^{2\pi} -\rho^\prime(\theta)\cos(n\theta)\sin(n\theta)-\rho(\theta)\sin^2(n\theta)d\theta + n\sqrt{\pi} b_0\text{ . } 
\end{align}
\subsection{Computing $\lambda_n^{(1)}$}
\label{computinglambdan1}
To compute $\lambda_n^{(1)}$ we must evaluate the integrals in the components of $M_n^{(1)}$. To do so, we recall our Fourier expansion of $\rho$ from (\ref{eq:rhoFourier}) and thus also calculate one for $\rho^\prime$:
\begin{align}
    \rho(\theta) &= \sum_{j=0}^\infty a_j\sin(j\theta) + b_j\cos(j\theta)\\
    \label{eq:rhoPrimeFourier}
    \rho^\prime(\theta) &= \sum_{j=0}^\infty ja_j\cos(j\theta)-jb_j\sin(j\theta)\text{ . } 
\end{align}

Finally, we again apply from the orthogonality of sine and cosine as in \eqref{sinecosine} and \eqref{sinesinecosinecosine}.
So now we may compute the following integrals:
    
\begin{align}
        \int_0^{2\pi}(\rho^\prime(\theta)\sin(n\theta)\cos(n\theta)&-\rho(\theta)\cos^2(n\theta))d\theta  \\
        =&\frac{1}{2}\int_0^{2\pi}\Big ( \sin(2n\theta)(\sum_{j=0}^\infty ja_j\cos(j\theta) - jb_j\sin(j\theta)) \nonumber \\
        &-( (1+\cos(2n\theta)) (\sum_{j=0}^\infty a_j\sin(j\theta) + b_j\cos(j\theta))\Big )d\theta \nonumber\\
        =&-\pi n b_{2n} - \frac{1}{2}\pi b_{2n} - \pi b_0
\end{align}

    \begin{align}
        \int_0^{2\pi}(\rho(\theta)\sin(n\theta)\cos(n\theta)&+\rho^\prime(\theta)\cos^2(n\theta))d\theta\\
        =&\frac{1}{2}\int_0^{2\pi}\Big (\sin(2n\theta)(\sum_{j=0}^\infty a_j\sin(j\theta) + b_j\cos(j\theta)) \nonumber \\
        &+(1+\cos(2n\theta))(\sum_{j=0}^\infty ja_j\cos(j\theta) - jb_j\sin(j\theta))\Big )d\theta \nonumber \\
        =&n\pi a_{2n} + \frac{1}{2}\pi a_{2n}
    \end{align}

    \begin{align}
        \int_0^{2\pi}(\rho^\prime(\theta)\sin^2(n\theta)&-\rho(\theta)\cos(n\theta)\sin(n\theta))d\theta\\
        =&\frac{1}{2}\int_0^{2\pi}\Big ( (1-\cos(2n\theta))(\sum_{j=0}^\infty ja_j\cos(j\theta) - jb_j\sin(j\theta)) \nonumber \\
        &-\sin(2n\theta)(\sum_{j=0}^\infty a_j\sin(j\theta) + b_j\cos(j\theta))\Big )d\theta \nonumber \\
        =&-n\pi a_{2n} - \frac{1}{2}\pi a_{2n}
    \end{align}

    \begin{align}
        \int_0^{2\pi}(\rho^\prime(\theta)\cos(n\theta)\sin(n\theta)&+\rho(\theta)\sin^2(n\theta))d\theta\\
        =&\frac{1}{2}\int_0^{2\pi}\Big ( \sin(2n\theta)(\sum_{j=0}^\infty ja_j\cos(j\theta) - jb_j\sin(j\theta)) \nonumber \\
        &+(1-\cos(2n\theta))(\sum_{j=0}^\infty a_j\sin(j\theta) + b_j\cos(j\theta))\Big )d\theta \nonumber \\
        =&-n\pi b_{2n}-\frac{1}{2}\pi b_{2n} + \pi b_0\text{ . } 
    \end{align}

Substituting back into our entries of $M_n^{(1)}$ gives
\begin{align}
    m_{n,\,(1,1)}^{(1)} &= -(n^2 + \frac{n}{2})\sqrt{\pi}b_{2n}\\
    m_{n,\,(1,2)}^{(1)} &= -(n^2 + \frac{n}{2})\sqrt{\pi}a_{2n}\\
    m_{n,\,(2,1)}^{(1)} &= -(n^2 + \frac{n}{2})\sqrt{\pi}a_{2n}\\
    m_{n,\,(2,2)}^{(1)} &= (n^2 + \frac{n}{2})\sqrt{\pi}b_{2n} \text{ . } 
\end{align}
So, overall:
\begin{align}
    M_n^{(1)} =  \begin{bmatrix}
        -(n^2 + \frac{n}{2})\sqrt{\pi}b_{2n} & -(n^2 + \frac{n}{2})\sqrt{\pi}a_{2n}\\
        -(n^2 + \frac{n}{2})\sqrt{\pi}a_{2n} & (n^2 + \frac{n}{2})\sqrt{\pi}b_{2n}
    \end{bmatrix}\text{ . } 
\end{align}
From the formula—$\text{Egn}(A) = \frac{\text{tr}(A)\pm\sqrt{\text{tr}^2(A) - 4\det(A)}}{2}$)—for the eigenvalues of a 2x2 matrix, we thus see that
\begin{align}
    \lambda_n^{(1)} &=\frac{\text{tr}(M_n^{(1)})\pm\sqrt{\text{tr}(M_n^{(1)})^2 - 4\det(M_n^{(1)})}}{2}\\
    &= \pm\sqrt{-\det(M_n^{(1)})} \nonumber\\
    &= \pm\sqrt{(n^2 + \frac{n}{2})^2\pi(a_{2n}^2 + b_{2n}^2)} \nonumber\\
    \label{lambdan1}
    &= \pm(n^2 + \frac{n}{2})\sqrt{\pi(a_{2n}^2 + b_{2n}^2)}\text{ . } 
\end{align}
These results align with and generalize the results obtained in \cite{viator2018}, namely Theorem 3.1 and the calculations below Theorem 4.3.

\pagebreak

\section{Second-order behavior of Steklov eigenvalues} 
\label{sec:asymptotic-sec} 
This section is dedicated to the proof of Theorem \ref{thm:noballs}.
\subsection{Integral Constants.}
\label{integralconstants}
{\allowdisplaybreaks
\begin{multicols}{2}[We define the following constants in terms of integrals involving $\rho$ and trigonometric functions from $0$ to $2\pi$ so we may reference them later:] 
\begin{align*}
    \mathcal{A}_n &=\frac{1}{\sqrt{\pi}}\int_0^{2\pi}\rho^2(\theta)\cos^2(n\theta)d\theta\\
    \mathcal{B}_n &=\frac{1}{\sqrt{\pi}}\int_0^{2\pi}\rho(\theta)\cos^2(n\theta)d\theta\\
    \mathcal{C}_n &=\frac{1}{\sqrt{\pi}}\int_0^{2\pi}{\rho^\prime}^2(\theta)\cos^2(n\theta)d\theta\\
    \mathcal{D}_n &=\frac{1}{\sqrt{\pi}}\int_0^{2\pi}\rho(\theta)\rho^\prime(\theta)\sin(n\theta)\cos(n\theta)d\theta\\
    \mathcal{E}_n &=\frac{1}{\sqrt{\pi}}\int_0^{2\pi}\rho^\prime(\theta)\sin(n\theta)\cos(n\theta)d\theta\\
    \mathcal{F}_n &=\frac{1}{\sqrt{\pi}}\int_0^{2\pi}\rho^2(\theta)\sin(n\theta)\cos(n\theta)d\theta\\
    \mathcal{G}_n &=\frac{1}{\sqrt{\pi}}\int_0^{2\pi}\rho(\theta)\sin(n\theta)\cos(n\theta)d\theta\\
    \mathcal{H}_n &=\frac{1}{\sqrt{\pi}}\int_0^{2\pi}{\rho^\prime}^2(\theta)\sin(n\theta)\cos(n\theta)d\theta\\
    \mathcal{I}_n &=\frac{1}{\sqrt{\pi}}\int_0^{2\pi}\rho(\theta)\rho^\prime(\theta)\cos^2(n\theta)d\theta\\
    \mathcal{J}_n &=\frac{1}{\sqrt{\pi}}\int_0^{2\pi}\rho^\prime(\theta)\cos^2(n\theta)d\theta\\
    \mathcal{K}_{n,k} &=\frac{1}{\sqrt{\pi}}\int_0^{2\pi} \rho^\prime(\theta)\sin(k\theta)\cos(n\theta)d\theta\\
    \mathcal{L}_{n,k} &=\frac{1}{\sqrt{\pi}}\int_0^{2\pi} \rho(\theta)\cos(k\theta)\cos(n\theta)d\theta\\
    \mathcal{M}_{n,k} &=\frac{1}{\sqrt{\pi}}\int_0^{2\pi} \rho^\prime(\theta)\cos(k\theta)\cos(n\theta)d\theta\\
    \mathcal{N}_{n,k} &=\frac{1}{\sqrt{\pi}}\int_0^{2\pi} \rho(\theta)\sin(k\theta)\cos(n\theta)d\theta\\
    \mathcal{O}_n &=\frac{1}{\sqrt{\pi}}\int_0^{2\pi}\rho(\theta)\rho^\prime(\theta)\sin^2(n\theta)d\theta\\
    \mathcal{P}_n &=\frac{1}{\sqrt{\pi}}\int_0^{2\pi}\rho^\prime(\theta)\sin^2(n\theta)d\theta\\
    \mathcal{Q}_n &=\frac{1}{\sqrt{\pi}}\int_0^{2\pi}\rho^2(\theta)\sin^2(n\theta)d\theta\\
    \mathcal{R}_n &=\frac{1}{\sqrt{\pi}}\int_0^{2\pi}\rho(\theta)\sin^2(n\theta)d\theta\\
    \mathcal{S}_n &=\frac{1}{\sqrt{\pi}}\int_0^{2\pi}{\rho^\prime}^2(\theta)\sin^2(n\theta)d\theta\\
    \mathcal{T}_{n,k} &=\frac{1}{\sqrt{\pi}}\int_0^{2\pi} \rho^\prime(\theta)\sin(k\theta)\sin(n\theta)d\theta\\
    \mathcal{U}_{n,k} &=\frac{1}{\sqrt{\pi}}\int_0^{2\pi} \rho(\theta)\cos(k\theta)\sin(n\theta)d\theta\\
    \mathcal{V}_{n,k} &=\frac{1}{\sqrt{\pi}}\int_0^{2\pi} \rho^\prime(\theta)\cos(k\theta)\sin(n\theta)d\theta\\
    \mathcal{W}_{n,k} &=\frac{1}{\sqrt{\pi}}\int_0^{2\pi} \rho(\theta)\sin(k\theta)\sin(n\theta)d\theta\text{ . } 
\end{align*}
\end{multicols}}

The computation of these constants in terms of the Fourier coefficients is saved for Appendix \ref{computingintegrals}, but follows the techniques in \ref{computinglambdan1}.

\subsection{Eigenfunction Information at Order-$\vareps$.}
We recall \eqref{orderepscollapsed}. As in \ref{formingmn1} we again generate two equations using "Fourier's Trick," but by extracting different frequencies. We multiply \eqref{orderepscollapsed} by $\cos(m\theta)$ for any $m \in \N$ where $m\neq n$ and integrate from $0$ to $2\pi$ to obtain
\begin{align}
    0 =& \frac{1}{\sqrt{\pi^{n-1}}}\Big[-n\int_0^{2\pi}\rho(\theta)\cos(n\theta)\cos(m\theta)d\theta + n\int_0^{2\pi}\rho^\prime(\theta)\sin(n\theta)\cos(m\theta)d\theta\Big]\alpha_n\\
    &+ \frac{1}{\sqrt{\pi^{n-1}}}\Big[-n\int_0^{2\pi}\rho(\theta)\sin(n\theta)\cos(m\theta)d\theta - n\int_0^{2\pi}\rho^\prime(\theta)\cos(n\theta)\cos(m\theta)d\theta\Big]\gamma_n\nonumber\\
    &+\frac{\pi}{\sqrt{\pi^{m-1}}}(m-n)\beta_m\nonumber\text{ . } 
\end{align}
Re-arranging and recalling \ref{integralconstants}:
\begin{align}
    \label{betam}
    \beta_m=& \frac{\sqrt{\pi^{m-n-1}}}{n-m}n\Big[(-\mathcal{L}_{n,m} + \mathcal{V}_{n,m})\alpha_n + (-\mathcal{U}_{n,m} -\mathcal{M}_{n,m})\gamma_n\Big]\text{ . } 
\end{align}
We repeat the above process except multiplying by $\sin(m\theta)$:
\begin{align}
    0 =& \frac{1}{\sqrt{\pi^{n-1}}}\Big[-n\int_0^{2\pi}\rho(\theta)\cos(n\theta)\sin(m\theta)d\theta + n\int_0^{2\pi}\rho^\prime(\theta)\sin(n\theta)\sin(m\theta)d\theta\Big]\alpha_n\\
    &+ \frac{1}{\sqrt{\pi^{n-1}}}\Big[-n\int_0^{2\pi}\rho(\theta)\sin(n\theta)\sin(m\theta)d\theta - n\int_0^{2\pi}\rho^\prime(\theta)\cos(n\theta)\sin(m\theta)d\theta\Big]\gamma_n\nonumber\\
    &+\frac{\pi}{\sqrt{\pi^{m-1}}}(m-n)\mu_m\nonumber\text{ . } 
\end{align}
Thus,
\begin{align}
    \label{mum}
    \mu_m=& \frac{\sqrt{\pi^{m-n-1}}}{n-m}n\Big[(-\mathcal{N}_{n,m} + \mathcal{T}_{n,m})\alpha_n + (-\mathcal{W}_{n,m} -\mathcal{K}_{n,m})\gamma_n\Big]
\end{align}

\subsection{Forming $M_n^{(2)}$}
We equate the order-$\vareps^2$ terms of \eqref{dnuexpanded} and \eqref{lambdauexpanded}:
\begin{align}
    \label{secondordersum}
    &\sum_{k=0}^\infty \frac{k}{\sqrt{\pi^{k-1}}}\Big[\frac{k-1}{2}((k-2)\rho^2(\theta) + kb_0^2-\frac{1}{2}\sum_{j=1}^\infty[a_j^2 + b_j^2]\\
     &-2(k-1)b_0\rho(\theta))(\delta_{k,n}\alpha_n\cos(k\theta) + \delta_{k,n}\gamma_n\sin(k\theta)) -\frac{1}{2}{\rho^\prime}^2(\theta)\delta_{k,n}\alpha_n\cos(k\theta)\nonumber\\
     &-\frac{1}{2}{\rho^\prime}^2(\theta)\delta_{k,n}\gamma_n\sin(k\theta) + \rho^\prime(\theta)\beta_k\sin(k\theta)-\rho^\prime(\theta)\mu_k\cos(k\theta)-\rho(\theta)\rho^\prime(\theta)\delta_{k,n}\alpha_n\sin(k\theta)\nonumber\\
     &+\rho(\theta)\rho^\prime(\theta)\delta_{k,n}\gamma_n\cos(k\theta)+ \sigma_k\cos(k\theta) + \eta_k\sin(k\theta) + (k-1)(\rho(\theta)-b_0)(\beta_k\cos(k\theta)\nonumber\\
     &+\mu_k\sin(k\theta) + \rho^\prime(\theta)\delta_{k,n}\alpha_n\sin(k\theta) - \rho^\prime(\theta)\delta_{k,n}\gamma_n\cos(k\theta))\Big]\nonumber\\
     =&\sum_{k=0}^\infty\frac{1}{\sqrt{\pi^k}}\Big[n\sqrt{\pi}(\sigma_k\cos(k\theta)+\eta_k\sin(k\theta)) + \lambda_n^{(1)}(\beta_k\cos(k\theta)+\mu_k\sin(k\theta))\nonumber\\
    &+ \lambda_n^{(2)}(\delta_{k,n}\alpha_n\cos(k\theta)+\delta_{k,n}\gamma_n\sin(k\theta)) + n\sqrt{\pi}\frac{k}{2}((k-1)\rho^2(\theta)+(k+1)b_0^2\nonumber\\
    &-\frac{1}{2}\sum_{j=1}^\infty[a_j^2+b_j^2]-2kb_0\rho(\theta))(\delta_{k,n}\alpha_n\cos(k\theta)+\delta_{k,n}\gamma_n\sin(k\theta))+k(\rho(\theta)-b_0)\nonumber\\
    &\cdot(n\sqrt{\pi}(\beta_k\cos(k\theta)+\mu_k\sin(k\theta))+ \lambda_n^{(1)}(\delta_{k,n}\alpha_n\cos(k\theta)+\delta_{k,n}\gamma_n\sin(k\theta)))\Big]\nonumber\text{ . } 
\end{align}
We recall that $\delta_{k,n} = \begin{cases}
    0 & k\neq n\\
    1 & k=n
\end{cases}$ to pull terms out of sums where possible:
\begin{align}
    &\frac{1}{\sqrt{\pi^n}}\lambda_n^{(2)}(\alpha_n\cos(n\theta)+\gamma_n\sin(n\theta))\\
    =&\frac{n}{2\sqrt{\pi^{n-1}}}(n-1)\Big((n-2)\rho^2(\theta)+nb_0^2- \frac{1}{2}\sum_{j=1}^\infty [a_j^2 + b_j^2]\nonumber\\
    &-2(n-1)b_0\rho(\theta)\Big)(\alpha_n \cos(n\theta) + \gamma_n\sin(n\theta))-\frac{n}{2\sqrt{\pi^{n-1}}}{\rho^\prime}^2(\theta)(\alpha_n \cos(n\theta)\nonumber\\
    &+\gamma_n\sin(n\theta)) +\frac{n}{\sqrt{\pi^{n-1}}}\rho(\theta)\rho^\prime(\theta)(\gamma_n\cos(n\theta) -\alpha_n\sin(n\theta))\nonumber\\
    &+\frac{n}{\sqrt{\pi^{n-1}}}(n-1)(\rho(\theta)-b_0)\rho^\prime(\theta)(\alpha_n\sin(n\theta)-\gamma_n\cos(n\theta))\nonumber\\
    &-\frac{n^2}{2\sqrt{\pi^{n-1}}}\Big((n-1)\rho^2(\theta)+(n+1)b_0^2- \frac{1}{2}\sum_{j=1}^\infty [a_j^2 + b_j^2]\nonumber\\
    &-2nb_0\rho(\theta)\Big)(\alpha_n\cos(n\theta) + \gamma_n\sin(n\theta)) - \lambda_n^{(1)}\frac{n}{\sqrt{\pi^n}}(\rho(\theta)-b_0)(\alpha_n\cos(n\theta) + \gamma_n\sin(n\theta))\nonumber\\
    &+\sum_{k=0}^\infty\Big[\frac{k}{\sqrt{\pi^{k-1}}}\rho^\prime(\theta)(\beta_k\sin(k\theta) - \mu_k\cos(k\theta)) + \frac{k}{\sqrt{\pi^{k-1}}}(k-1)(\rho(\theta)-b_0)(\beta_k\cos(k\theta)\nonumber\\
    &+ \mu_k\sin(k\theta)) - \frac{k}{\sqrt{\pi^{k-1}}}n(\rho(\theta)-b_0)(\beta_k\cos(k\theta) + \mu_k\sin(k\theta)) - \frac{\lambda_n^{(1)}}{\sqrt{\pi^{k}}}(\beta_k\cos(k\theta)\nonumber\\
    &+ \mu_k\sin(k\theta)) + \frac{k}{\sqrt{\pi^{k-1}}}(\sigma_k\cos(k\theta) + \eta_k\sin(k\theta)) - \frac{n}{\sqrt{\pi^{k-1}}}(\sigma_k\cos(k\theta) + \eta_k\sin(k\theta))\Big]\nonumber\\[20pt]
    \label{secondordercollapsed}
    &\frac{1}{\sqrt{\pi}}\lambda_n^{(2)}(\alpha_n\cos(n\theta)+\gamma_n\sin(n\theta))\\
    =&\frac{1}{2}n(n-1)\Big((n-2)\rho^2(\theta)+nb_0^2 - \frac{1}{2}\sum_{j=1}^\infty [a_j^2 + b_j^2]\nonumber\\
    &-2(n-1)b_0\rho(\theta)\Big)(\alpha_n \cos(n\theta) + \gamma_n\sin(n\theta))-\frac{1}{2}n{\rho^\prime}^2(\theta)(\alpha_n \cos(n\theta)\nonumber\\
    &+\gamma_n\sin(n\theta)) +n\rho(\theta)\rho^\prime(\theta)(\gamma_n\cos(n\theta) -\alpha_n\sin(n\theta))\nonumber\\
    &+n(n-1)(\rho(\theta)-b_0)\rho^\prime(\theta)(\alpha_n\sin(n\theta)-\gamma_n\cos(n\theta))\nonumber\\
    &-\frac{1}{2}n^2\Big((n-1)\rho^2(\theta)+(n+1)b_0^2 - \frac{1}{2}\sum_{j=1}^\infty [a_j^2 + b_j^2]\nonumber\\
    &-2nb_0\rho(\theta)\Big)(\alpha_n\cos(n\theta) + \gamma_n\sin(n\theta)) - \lambda_n^{(1)}\frac{n}{\sqrt{\pi}}(\rho(\theta)-b_0)(\alpha_n\cos(n\theta) + \gamma_n\sin(n\theta))\nonumber\\
    &+\sum_{k=0}^\infty\sqrt{\pi^{n-k}}\Big[k\rho^\prime(\theta)(\beta_k\sin(k\theta) - \mu_k\cos(k\theta)) + k(k-1)(\rho(\theta)-b_0)(\beta_k\cos(k\theta)\nonumber\\
    &+ \mu_k\sin(k\theta)) - kn(\rho(\theta)-b_0)(\beta_k\cos(k\theta) + \mu_k\sin(k\theta)) - \frac{\lambda_n^{(1)}}{\sqrt{\pi}}(\beta_k\cos(k\theta)\nonumber\\
    &+ \mu_k\sin(k\theta)) + k(\sigma_k\cos(k\theta) + \eta_k\sin(k\theta)) - n(\sigma_k\cos(k\theta) + \eta_k\sin(k\theta))\Big]\nonumber\text{ . } 
\end{align}
We follow the same method as in \ref{formingmn1} to form two equations. Multiplying \eqref{secondordercollapsed} by $\cos(n\theta)$ and integrating from $0$ to $2\pi$ using our definitions in \ref{integralconstants} yields
\begin{align}
    \label{lambdan2alphainitial}
    \lambda_n^{(2)}\alpha_n =& \Big[-n(n-1)\mathcal{A}_n + n((2n-1)b_0-\frac{\lambda_n^{(1)}}{\sqrt{\pi}})\mathcal{B}_n - \frac{1}{2}n\mathcal{C}_n +n(n-2)\mathcal{D}_n\\
    &- n(n-1)b_0\mathcal{E}_n - n^2b_0^2\sqrt{\pi}+\frac{1}{4}n\sqrt{\pi}\sum_{j=1}^\infty[a_j^2 + b_j^2] +\lambda_n^{(1)}nb_0\Big]\alpha_n\nonumber\\
    &+\Big[-n(n-1)\mathcal{F}_n + n((2n-1)b_0-\frac{\lambda_n^{(1)}}{\sqrt{\pi}})\mathcal{G}_n - \frac{1}{2}n\mathcal{H}_n-n(n-2)\mathcal{I}_n\nonumber\\
    &+n(n-1)b_0\mathcal{J}_n\Big]\gamma_n\nonumber\\
    &+ \Big[n\mathcal{E}_n - n\mathcal{B}_n + nb_0\sqrt{\pi} -\lambda_n^{(1)}\Big]\beta_n+\Big[-n\mathcal{J}_n -n \mathcal{G}_n\Big]\mu_n\nonumber\\
    &+ \sum_{k\neq n}\sqrt{\pi^{n-k}}\Big[[k\mathcal{K}_{n,k}+k(k-n-1)\mathcal{L}_{n,k}]\beta_k+ [-k\mathcal{M}_{n,k}+k(k-n-1)\mathcal{N}_{n,k}]\mu_k\Big]\nonumber\text{ . } 
\end{align}
Repeating the same with $\sin(n\theta)$ yields
\begin{align}
    \label{lambdan2gammainitial}
    \lambda_n^{(2)}\gamma_n =& \Big[-n(n-1)\mathcal{F}_n + n((2n-1)b_0 -\frac{\lambda_n^{(1)}}{\sqrt{\pi}})\mathcal{G}_n - \frac{1}{2}n\mathcal{H}_n + n(n-2)\mathcal{O}_n\\
    &- n(n-1)b_0\mathcal{P}_n\Big]\alpha_n + \Big[-n(n-2)\mathcal{D}_n + n(n-1)b_0\mathcal{E}_n-n(n-1)\mathcal{Q}_n\nonumber\\
    &+ n((2n-1)b_0 -\frac{\lambda_n^{(1)}}{\sqrt{\pi}})\mathcal{R}_n - \frac{1}{2}n\mathcal{S}_n-n^2b_0^2\sqrt{\pi} + \frac{1}{4}n\sqrt{\pi}\sum_{j=1}^\infty[a_j^2 + b_j^2]+\lambda_n^{(1)}nb_0\Big]\gamma_n\nonumber\\
    &+ \Big[n\mathcal{P}_n-n\mathcal{G}_n\Big]\beta_n +\Big[-n\mathcal{E}_n-n\mathcal{R}_n + n\sqrt{\pi}b_0 -\lambda_n^{(1)}\Big]\mu_n\nonumber\\
    &+ \sum_{k\neq n}\sqrt{\pi^{n-k}}\Big[[k\mathcal{T}_{n,k}+k(k-n-1)\mathcal{U}_{n,k}]\beta_k + [-k\mathcal{V}_{n,k}+k(k-n-1)\mathcal{W}_{n,k}]\mu_k\Big]\nonumber\text{ . } 
\end{align}
We substitute \eqref{betam} and \eqref{mum} into \eqref{lambdan2alphainitial}:
\begin{align}
    \lambda_n^{(2)}\alpha_n =& \Big[-n(n-1)\mathcal{A}_n + n((2n-1)b_0-\frac{\lambda_n^{(1)}}{\sqrt{\pi}})\mathcal{B}_n - \frac{1}{2}n\mathcal{C}_n +n(n-2)\mathcal{D}_n\\
    &- n(n-1)b_0\mathcal{E}_n - n^2b_0^2\sqrt{\pi}+\frac{1}{4}n\sqrt{\pi}\sum_{j=1}^\infty[a_j^2 + b_j^2] +\lambda_n^{(1)}nb_0\Big]\alpha_n\nonumber\\
    &+\Big[-n(n-1)\mathcal{F}_n + n((2n-1)b_0-\frac{\lambda_n^{(1)}}{\sqrt{\pi}})\mathcal{G}_n - \frac{1}{2}n\mathcal{H}_n-n(n-2)\mathcal{I}_n\nonumber\\
    &+n(n-1)b_0\mathcal{J}_n\Big]\gamma_n\nonumber\\
    &+ \Big[n\mathcal{E}_n - n\mathcal{B}_n + nb_0\sqrt{\pi} -\lambda_n^{(1)}\Big]\beta_n+\Big[-n\mathcal{J}_n -n \mathcal{G}_n\Big]\mu_n\nonumber\\
    &+ \sum_{k\neq n}\frac{nk}{\sqrt{\pi}(n-k)}\Big[[\mathcal{K}_{n,k}+(k-n-1)\mathcal{L}_{n,k}][(-\mathcal{L}_{n,k} + \mathcal{V}_{n,k})\alpha_n\nonumber\\
    &+ (-\mathcal{U}_{n,k} -\mathcal{M}_{n,k})\gamma_n]+ [-\mathcal{M}_{n,k}+(k-n-1)\mathcal{N}_{n,k}][(-\mathcal{N}_{n,k} + \mathcal{T}_{n,k})\alpha_n\nonumber\\
    &+ (-\mathcal{W}_{n,k} -\mathcal{K}_{n,k})\gamma_n]\Big]\nonumber\text{ . } 
\end{align}
Grouping by $\alpha_n$, $\gamma_n$, $\beta_n$ and $\mu_n$ gives
\begin{align}
    \label{lambdan2alphageneral}
    \lambda_n^{(2)}\alpha_n =& \Big[-n(n-1)\mathcal{A}_n + n((2n-1)b_0-\frac{\lambda_n^{(1)}}{\sqrt{\pi}})\mathcal{B}_n - \frac{1}{2}n\mathcal{C}_n +n(n-2)\mathcal{D}_n\\
    &- n(n-1)b_0\mathcal{E}_n - n^2b_0^2\sqrt{\pi}+\frac{1}{4}n\sqrt{\pi}\sum_{j=1}^\infty[a_j^2 + b_j^2] +\lambda_n^{(1)}nb_0\nonumber\\
    &+ \sum_{k\neq n} \frac{nk}{\sqrt{\pi}(n-k)}[(\mathcal{K}_{n,k}+(k-n-1)\mathcal{L}_{n,k})(-\mathcal{L}_{n,k} + \mathcal{V}_{n,k})\nonumber\\
    &+ (-\mathcal{M}_{n,k}+(k-n-1)\mathcal{N}_{n,k})(-\mathcal{N}_{n,k} + \mathcal{T}_{n,k})]\Big]\alpha_n\nonumber\\
    &+\Big[-n(n-1)\mathcal{F}_n + n((2n-1)b_0-\frac{\lambda_n^{(1)}}{\sqrt{\pi}})\mathcal{G}_n - \frac{1}{2}n\mathcal{H}_n-n(n-2)\mathcal{I}_n\nonumber\\
    &+n(n-1)b_0\mathcal{J}_n + \sum_{k\neq n} \frac{nk}{\sqrt{\pi}(n-k)}[(\mathcal{K}_{n,k}+(k-n-1)\mathcal{L}_{n,k})(-\mathcal{U}_{n,k} - \mathcal{M}_{n,k})\nonumber\\
    &+ (-\mathcal{M}_{n,k}+(k-n-1)\mathcal{N}_{n,k})(-\mathcal{W}_{n,k} - \mathcal{K}_{n,k})]\Big]\gamma_n\nonumber\\
    &+ \Big[n\mathcal{E}_n - n\mathcal{B}_n + nb_0\sqrt{\pi} -\lambda_n^{(1)}\Big]\beta_n+\Big[-n\mathcal{J}_n -n \mathcal{G}_n\Big]\mu_n\nonumber\text{ . } 
\end{align}
We repeat this process with \eqref{lambdan2gammainitial}:
\begin{align}
    \lambda_n^{(2)}\gamma_n =& \Big[-n(n-1)\mathcal{F}_n + n((2n-1)b_0 -\frac{\lambda_n^{(1)}}{\sqrt{\pi}})\mathcal{G}_n - \frac{1}{2}n\mathcal{H}_n + n(n-2)\mathcal{O}_n\\
    &- n(n-1)b_0\mathcal{P}_n\Big]\alpha_n + \Big[-n(n-2)\mathcal{D}_n + n(n-1)b_0\mathcal{E}_n-n(n-1)\mathcal{Q}_n\nonumber\\
    &+ n((2n-1)b_0 -\frac{\lambda_n^{(1)}}{\sqrt{\pi}})\mathcal{R}_n - \frac{1}{2}n\mathcal{S}_n-n^2b_0^2\sqrt{\pi} + \frac{1}{4}n\sqrt{\pi}\sum_{j=1}^\infty[a_j^2 + b_j^2]+\lambda_n^{(1)}nb_0\Big]\gamma_n\nonumber\\
    &+ \Big[n\mathcal{P}_n-n\mathcal{G}_n\Big]\beta_n +\Big[-n\mathcal{E}_n-n\mathcal{R}_n + n\sqrt{\pi}b_0 -\lambda_n^{(1)}\Big]\mu_n\nonumber\\
    &+ \sum_{k\neq n}\frac{nk}{\sqrt{\pi}(n-k)}\Big[[\mathcal{T}_{n,k}+(k-n-1)\mathcal{U}_{n,k}][(-\mathcal{L}_{n,k} + \mathcal{V}_{n,k})\alpha_n\nonumber\\
    &+ (-\mathcal{U}_{n,k} -\mathcal{M}_{n,k})\gamma_n] + [-\mathcal{V}_{n,k}+(k-n-1)\mathcal{W}_{n,k}][(-\mathcal{N}_{n,k} + \mathcal{T}_{n,k})\alpha_n\nonumber\\
    &+ (-\mathcal{W}_{n,k} -\mathcal{K}_{n,k})\gamma_n]\Big]\nonumber\\[20pt]
    \label{lambdan2gammageneral}
    \lambda_n^{(2)}\gamma_n =& \Big[-n(n-1)\mathcal{F}_n + n((2n-1)b_0 -\frac{\lambda_n^{(1)}}{\sqrt{\pi}})\mathcal{G}_n - \frac{1}{2}n\mathcal{H}_n + n(n-2)\mathcal{O}_n\\
    &- n(n-1)b_0\mathcal{P}_n + \sum_{k\neq n} \frac{nk}{\sqrt{\pi}(n-k)}[(\mathcal{T}_{n,k}+(k-n-1)\mathcal{U}_{n,k})(-\mathcal{L}_{n,k} + \mathcal{V}_{n,k})\nonumber\\
    &+ (-\mathcal{V}_{n,k}+(k-n-1)\mathcal{W}_{n,k})(-\mathcal{N}_{n,k} + \mathcal{T}_{n,k})]\Big]\alpha_n\nonumber\\
    &+ \Big[-n(n-2)\mathcal{D}_n + n(n-1)b_0\mathcal{E}_n-n(n-1)\mathcal{Q}_n+ n((2n-1)b_0 -\frac{\lambda_n^{(1)}}{\sqrt{\pi}})\mathcal{R}_n\nonumber\\
    &- \frac{1}{2}n\mathcal{S}_n-n^2b_0^2\sqrt{\pi} + \frac{1}{4}n\sqrt{\pi}\sum_{j=1}^\infty[a_j^2 + b_j^2]+\lambda_n^{(1)}nb_0\nonumber\\
    &+ \sum_{k\neq n} \frac{nk}{\sqrt{\pi}(n-k)}[(\mathcal{T}_{n,k}+(k-n-1)\mathcal{U}_{n,k})(-\mathcal{U}_{n,k} - \mathcal{M}_{n,k})\nonumber\\
    &+ (-\mathcal{V}_{n,k}+(k-n-1)\mathcal{W}_{n,k})(-\mathcal{W}_{n,k} - \mathcal{K}_{n,k})]\Big]\gamma_n\nonumber\\
    &+ \Big[n\mathcal{P}_n-n\mathcal{G}_n\Big]\beta_n +\Big[-n\mathcal{E}_n-n\mathcal{R}_n + n\sqrt{\pi}b_0 -\lambda_n^{(1)}\Big]\mu_n\nonumber\text{ . } 
\end{align}

From here on, we will assume $\lambda_n^{(1)} = 0$. \eqref{lambdan1} shows that this assumption is equivalent to assuming $a_{2n} = b_{2n} = 0$ or that $\lambda_n$ does not split at order-$\vareps$. The values found in \ref{integralsvalue} now show $\mathcal{E}_n = \mathcal{J}_n = \mathcal{G}_n = \mathcal{P}_n = 0$ and $\mathcal{B}_n = \mathcal{R}_n = b_0\sqrt{\pi}$. Substituting into \eqref{lambdan2alphageneral} gives
\begin{align}
    \lambda_n^{(2)}\alpha_n =& \Big[-n(n-1)\mathcal{A}_n - \frac{1}{2}n\mathcal{C}_n +n(n-2)\mathcal{D}_n+ n(n-1)b_0^2\sqrt{\pi}+\frac{1}{4}n\sqrt{\pi}\sum_{j=1}^\infty[a_j^2 + b_j^2]\\
    &+ \sum_{k\neq n} \frac{nk}{\sqrt{\pi}(n-k)}[(\mathcal{K}_{n,k}+(k-n-1)\mathcal{L}_{n,k})(-\mathcal{L}_{n,k} + \mathcal{V}_{n,k})\nonumber\\
    &+ (-\mathcal{M}_{n,k}+(k-n-1)\mathcal{N}_{n,k})(-\mathcal{N}_{n,k} + \mathcal{T}_{n,k})]\Big]\alpha_n\nonumber\\
    &+\Big[-n(n-1)\mathcal{F}_n - \frac{1}{2}n\mathcal{H}_n-n(n-2)\mathcal{I}_n+ \sum_{k\neq n} \frac{nk}{\sqrt{\pi}(n-k)}[(\mathcal{K}_{n,k}\nonumber\\
    &+(k-n-1)\mathcal{L}_{n,k})(-\mathcal{U}_{n,k} - \mathcal{M}_{n,k})+ (-\mathcal{M}_{n,k}+(k-n-1)\mathcal{N}_{n,k})(-\mathcal{W}_{n,k}\nonumber\\
    &- \mathcal{K}_{n,k})]\Big]\gamma_n\nonumber\text{ . } 
\end{align}
Substituting into \eqref{lambdan2gammageneral} gives
\begin{align}
    \lambda_n^{(2)}\gamma_n =& \Big[-n(n-1)\mathcal{F}_n - \frac{1}{2}n\mathcal{H}_n + n(n-2)\mathcal{O}_n+ \sum_{k\neq n} \frac{nk}{\sqrt{\pi}(n-k)}[(\mathcal{T}_{n,k}\\
    &+(k-n-1)\mathcal{U}_{n,k})(-\mathcal{L}_{n,k} + \mathcal{V}_{n,k})+ (-\mathcal{V}_{n,k}+(k-n-1)\mathcal{W}_{n,k})(-\mathcal{N}_{n,k} + \mathcal{T}_{n,k})]\Big]\alpha_n\nonumber\\
    &+ \Big[-n(n-2)\mathcal{D}_n -n(n-1)\mathcal{Q}_n+- \frac{1}{2}n\mathcal{S}_n+n(n-1)b_0^2\sqrt{\pi} + \frac{1}{4}n\sqrt{\pi}\sum_{j=1}^\infty[a_j^2 + b_j^2]\nonumber\\
    &+ \sum_{k\neq n} \frac{nk}{\sqrt{\pi}(n-k)}[(\mathcal{T}_{n,k}+(k-n-1)\mathcal{U}_{n,k})(-\mathcal{U}_{n,k} - \mathcal{M}_{n,k})+ (-\mathcal{V}_{n,k}\nonumber\\
    &+(k-n-1)\mathcal{W}_{n,k})(-\mathcal{W}_{n,k} - \mathcal{K}_{n,k})]\Big]\gamma_n\nonumber\text{ . } 
\end{align}
So we see
\begin{align}
\label{lambdan2eigenvalue}
    \lambda_n^{(2)} \begin{bmatrix}
        \alpha_n\\
        \gamma_n
    \end{bmatrix}&= M_n^{(2)}\begin{bmatrix}
        \alpha_n\\
        \gamma_n
    \end{bmatrix}\text{ . } 
\end{align}
i.e. $\lambda_n^{(2)}$ must be an eigenvalue of the matrix $M_n^{(2)} = \begin{bmatrix}
    m_{n,\,(1,1)}^{(2)} & m_{n,\,(1,2)}^{(2)}\\
    m_{n,\,(2,1)}^{(2)} & m_{n,\,(2,2)}^{(2)}
\end{bmatrix}$ given by
\begin{align}
    \label{mn211}
    m_{n,\,(1,1)}^{(2)} =& -n(n-1)\mathcal{A}_n - \frac{1}{2}n\mathcal{C}_n +n(n-2)\mathcal{D}_n+ n(n-1)b_0^2\sqrt{\pi} \\ &+\frac{1}{4}n\sqrt{\pi}\sum_{j=1}^\infty[a_j^2 + b_j^2] + \sum_{k\neq n} \frac{nk}{\sqrt{\pi}(n-k)}\big[(\mathcal{K}_{n,k}+(k-n-1)\mathcal{L}_{n,k})(-\mathcal{L}_{n,k} + \mathcal{V}_{n,k}) \nonumber\\
    &+ (-\mathcal{M}_{n,k}+(k-n-1)\mathcal{N}_{n,k})(-\mathcal{N}_{n,k} + \mathcal{T}_{n,k})\big]\nonumber\\
    \label{mn212}
    m_{n,\,(1,2)}^{(2)} =& -n(n-1)\mathcal{F}_n - \frac{1}{2}n\mathcal{H}_n-n(n-2)\mathcal{I}_n \\ &+ \sum_{k\neq n} \frac{nk}{\sqrt{\pi}(n-k)}\big[(\mathcal{K}_{n,k} +(k-n-1)\mathcal{L}_{n,k})(-\mathcal{U}_{n,k} - \mathcal{M}_{n,k}) \nonumber \\ &+ (-\mathcal{M}_{n,k}+(k-n-1)\mathcal{N}_{n,k})(-\mathcal{W}_{n,k}- \mathcal{K}_{n,k})\big]\nonumber\\
    \label{mn221}
     m_{n,\,(2,1)}^{(2)} =&-n(n-1)\mathcal{F}_n - \frac{1}{2}n\mathcal{H}_n + n(n-2)\mathcal{O}_n+ \sum_{k\neq n} \frac{nk}{\sqrt{\pi}(n-k)}\big[(\mathcal{T}_{n,k}\\
    &+(k-n-1)\mathcal{U}_{n,k})(-\mathcal{L}_{n,k} + \mathcal{V}_{n,k})+ (-\mathcal{V}_{n,k}+(k-n-1)\mathcal{W}_{n,k})(-\mathcal{N}_{n,k} + \mathcal{T}_{n,k})\big]\nonumber\\
    \label{mn222}
     m_{n,\,(2,2)}^{(2)} =&-n(n-2)\mathcal{D}_n -n(n-1)\mathcal{Q}_n- \frac{1}{2}n\mathcal{S}_n+n(n-1)b_0^2\sqrt{\pi} + \frac{1}{4}n\sqrt{\pi}\sum_{j=1}^\infty[a_j^2 + b_j^2]\\
    &+ \sum_{k\neq n} \frac{nk}{\sqrt{\pi}(n-k)}\big[(\mathcal{T}_{n,k}+(k-n-1)\mathcal{U}_{n,k})(-\mathcal{U}_{n,k} - \mathcal{M}_{n,k})+ (-\mathcal{V}_{n,k}\nonumber\\
    &+(k-n-1)\mathcal{W}_{n,k})(-\mathcal{W}_{n,k} - \mathcal{K}_{n,k})\big]\nonumber\text{ . } 
\end{align}

\subsection{$M_n^{(2)}$ is Symmetric}
\label{mn2symmetric}
We demonstrate the $M_n^{(2)}$ is symmetric by showing that $m_{n,\,(1,2)}^{(2)}-m_{n,\,(2,1)}^{(2)} =0$. We substitute in \eqref{mn212} and \eqref{mn221}:
\begin{align}
    &m_{n,\,(1,2)}^{(2)} - m_{n,\,(2,1)}^{(2)}\\
    =&-n(n-1)\mathcal{F}_n - \frac{1}{2}n\mathcal{H}_n-n(n-2)\mathcal{I}_n+ \sum_{k\neq n} \frac{nk}{\sqrt{\pi}(n-k)}\Big[(\mathcal{K}_{n,k}\nonumber\\
    &+(k-n-1)\mathcal{L}_{n,k})(-\mathcal{U}_{n,k} - \mathcal{M}_{n,k})+ (-\mathcal{M}_{n,k}+(k-n-1)\mathcal{N}_{n,k})(-\mathcal{W}_{n,k}- \mathcal{K}_{n,k})\Big]\nonumber\\
    &-\Big(-n(n-1)\mathcal{F}_n - \frac{1}{2}n\mathcal{H}_n + n(n-2)\mathcal{O}_n+ \sum_{k\neq n} \frac{nk}{\sqrt{\pi}(n-k)}\Big[(\mathcal{T}_{n,k\nonumber}\\
    &+(k-n-1)\mathcal{U}_{n,k})(-\mathcal{L}_{n,k} + \mathcal{V}_{n,k})+ (-\mathcal{V}_{n,k}+(k-n-1)\mathcal{W}_{n,k})(-\mathcal{N}_{n,k} + \mathcal{T}_{n,k})\Big]\Big)\nonumber\\
    \label{symmetricsubtraction}
    =&-n(n-2)(\mathcal{I}_n+\mathcal{O}_n) +\sum_{k\neq n}\frac{nk}{\sqrt{\pi}(n-k)}\Big[(\mathcal{K}_{n,k}+(k-n-1)\mathcal{L}_{n,k})(-\mathcal{U}_{n,k} - \mathcal{M}_{n,k})\\
    &+ (-\mathcal{M}_{n,k}+(k-n-1)\mathcal{N}_{n,k})(-\mathcal{W}_{n,k}- \mathcal{K}_{n,k}) -(\mathcal{T}_{n,k}+(k-n-1)\mathcal{U}_{n,k})(-\mathcal{L}_{n,k}\nonumber\\
    &+ \mathcal{V}_{n,k})- (-\mathcal{V}_{n,k}+(k-n-1)\mathcal{W}_{n,k})(-\mathcal{N}_{n,k} + \mathcal{T}_{n,k})\Big]\nonumber\text{ . } 
\end{align}
Notice, applying the formulas in \ref{integralsvalue}, we have
\begin{align}
    \mathcal{I}_n + \mathcal{O}_n =& \frac{\sqrt{\pi}}{4}\Big[\sum_{j=0}^{2n}(2n-j)(a_{j}b_{2n-j} + b_ja_{2n-j}) + 2n\sum_{j=n}^\infty(b_{j-n}a_{j+n}-a_{j-n}b_{j+n})\\
    &-\sum_{j=0}^{2n}(2n-j)(a_{j}b_{2n-j} + b_ja_{2n-j}) - 2n\sum_{j=n}^\infty(b_{j-n}a_{j+n}-a_{j-n}b_{j+n})\Big]\nonumber\\
    \label{In+On}
    =&\frac{\sqrt{\pi}}{4}[0] = 0\text{ . } 
\end{align}
Substituting \eqref{In+On} into \eqref{symmetricsubtraction} gives
\begin{align}
\label{symmetricreduced}
    &m_{n,\,(1,2)}^{(2)} - m_{n,\,(2,1)}^{(2)}\\
    =&\sum_{k\neq n}\frac{nk}{\sqrt{\pi}(n-k)}\Big[(\mathcal{K}_{n,k}+(k-n-1)\mathcal{L}_{n,k})(-\mathcal{U}_{n,k} - \mathcal{M}_{n,k})+ (-\mathcal{M}_{n,k}\nonumber\\
    &+(k-n-1)\mathcal{N}_{n,k})(-\mathcal{W}_{n,k}- \mathcal{K}_{n,k}) -(\mathcal{T}_{n,k}+(k-n-1)\mathcal{U}_{n,k})(-\mathcal{L}_{n,k}+ \mathcal{V}_{n,k})\nonumber\\
    &- (-\mathcal{V}_{n,k}+(k-n-1)\mathcal{W}_{n,k})(-\mathcal{N}_{n,k} + \mathcal{T}_{n,k})\Big]\nonumber\text{ . } 
\end{align}
We consider each term of this remaining sum, substituting the formulas in \ref{integralsvalue} when necessary. When $k\neq n$ we have
\begin{align}
    &(\mathcal{K}_{n,k}+(k-n-1)\mathcal{L}_{n,k})(-\mathcal{U}_{n,k} - \mathcal{M}_{n,k})\\
    &+ (-\mathcal{M}_{n,k}+(k-n-1)\mathcal{N}_{n,k})(-\mathcal{W}_{n,k}- \mathcal{K}_{n,k})\nonumber\\
    &-(\mathcal{T}_{n,k}+(k-n-1)\mathcal{U}_{n,k})(-\mathcal{L}_{n,k}+ \mathcal{V}_{n,k})\nonumber\\
    &- (-\mathcal{V}_{n,k}+(k-n-1)\mathcal{W}_{n,k})(-\mathcal{N}_{n,k} + \mathcal{T}_{n,k})\nonumber\\
    =&-\mathcal{K}_{n,k}\mathcal{U}_{n,k} - (k-n-1)\mathcal{L}_{n,k}\mathcal{U}_{n,k} -(k-n-1)\mathcal{L}_{n,k}\mathcal{M}_{n,k}+ \mathcal{M}_{n,k}\mathcal{W}_{n,k}\nonumber\\
    &- (k-n-1)\mathcal{N}_{n,k}\mathcal{W}_{n,k} - (k-n-1)\mathcal{N}_{n,k}\mathcal{K}_{n,k} +\mathcal{T}_{n,k}\mathcal{L}_{n,k}+ (k-n-1)\mathcal{L}_{n,k}\mathcal{U}_{n,k}\nonumber\\
    &- (k-n-1)\mathcal{U}_{n,k}\mathcal{V}_{n,k} - \mathcal{V}_{n,k}\mathcal{N}_{n,k}+(k-n-1)\mathcal{W}_{n,k}\mathcal{N}_{n,k} - (k-n-1)\mathcal{W}_{n,k}\mathcal{T}_{n,k}\nonumber\\
    =&\frac{\pi}{4}[(0) + (k-n-1)(0) + (k-n-1)(0)]=0\text{ . } 
\end{align}
Thus \eqref{symmetricreduced} becomes
\begin{align}
    m_{n,\,(1,2)}^{(2)} - m_{n,\,(2,1)}^{(2)} =\sum_{k\neq n}\frac{nk}{\sqrt{\pi}(n-k)}[0]=0\text{ . } 
\end{align}
So we see that $M_n^{(2)}$ is symmetric.
\subsection{Evaluation of $M_n^{(2)}$ for Choice of $\rho$}
\label{evaluationmn2}
We evaluate $M_n^{(2)}$ in the specific case where $\rho(\theta) = \cos((n+\lceil\frac{n}{2}\rceil)\theta),\ \forall n > 1$.
\subsubsection{Even $n$}
When $n$ is even we see $\rho(\theta) = \cos((n+\frac{n}{2})\theta)$. Clearly $a_{2n} = b_{2n} = 0$, so $M_n^{(2)}$ governs the behavior of $\lambda_n^{(2)}$. Referencing \ref{integralsvalue} we see
\begin{align}
    \mathcal{D}_n &= \mathcal{F}_n = \mathcal{H}_n = \mathcal{I}_n =\mathcal{O}_n = 0\\
    \mathcal{C}_n &= \mathcal{S}_n = \frac{\sqrt{\pi}}{2}(n+\frac{n}{2})^2\\
    \mathcal{A}_n &= \mathcal{Q}_n = \frac{\sqrt{\pi}}{2}\text{ . } 
\end{align}
Furthermore, we notice see that every integral constant formula in \ref{integralsvalue} which involves both $k$ and $n$ depends only on $k+n$ Fourier coefficients and $k-n$ Fourier coefficients and will be zero if all of these coefficients are zero. Thus, recalling that $k\geq 0$, we see that these constants only survive when $k=\frac{n}{2}$ or $k=2n+ \frac{n}{2}$. In which case, \ref{integralsvalue} tells us:
\begin{align}
    \mathcal{M}_{n,\frac{n}{2}} &=\mathcal{N}_{n,\frac{n}{2}}= \mathcal{T}_{n,\frac{n}{2}} =\mathcal{U}_{n,\frac{n}{2}}= \mathcal{M}_{n,2n+ \frac{n}{2}} =\mathcal{N}_{n,2n+ \frac{n}{2}} =\mathcal{T}_{n,2n+ \frac{n}{2}} =\mathcal{U}_{n,2n+ \frac{n}{2}} =0\\
    \mathcal{K}_{n,\frac{n}{2}} &= \mathcal{V}_{n,\frac{n}{2}}=\mathcal{K}_{n,2n+ \frac{n}{2}} =-\frac{\sqrt{\pi}}{2}(n+\frac{n}{2})\\
    \mathcal{L}_{n,\frac{n}{2}} &=\mathcal{L}_{n,2n+ \frac{n}{2}}=\mathcal{W}_{n,2n+ \frac{n}{2}}=\frac{\sqrt{\pi}}{2}\\
    \mathcal{W}_{n,\frac{n}{2}} &=-\frac{\sqrt{\pi}}{2}\\
    \mathcal{V}_{n,2n+ \frac{n}{2}} &=\frac{\sqrt{\pi}}{2}(n+ \frac{n}{2})\text{ . }
\end{align}

We thus evaluate the entries of $M_n^{(2)}$ for this choice of $\rho$. \eqref{mn211} becomes
\begin{align}
    m_{n,\,(1,1)}^{(2)} =& -n(n-1)\frac{\sqrt{\pi}}{2} - \frac{1}{2}n\frac{\sqrt{\pi}}{2}(n+\frac{n}{2})^2 +\frac{1}{4}n\sqrt{\pi}\\
    &+ \frac{n(\frac{n}{2})}{\sqrt{\pi}(n-\frac{n}{2})}\Big[(-\frac{\sqrt{\pi}}{2}(n+\frac{n}{2})-(\frac{n}{2}+1)\frac{\sqrt{\pi}}{2})(-\frac{\sqrt{\pi}}{2} -\frac{\sqrt{\pi}}{2}(n+\frac{n}{2}))\Big]\nonumber\\
    &+ \frac{n(2n+\frac{n}{2})}{\sqrt{\pi}(-n-\frac{n}{2})}\Big[(-\frac{\sqrt{\pi}}{2}(n+\frac{n}{2}))+(n+\frac{n}{2}-1)\frac{\sqrt{\pi}}{2})(-\frac{\sqrt{\pi}}{2} + \frac{\sqrt{\pi}}{2}(n+\frac{n}{2}))\Big]\nonumber\\
    \label{mn211evaluatedeven}
    =&-n(n-1)\frac{\sqrt{\pi}}{2} - n(n+\frac{n}{2})^2\frac{\sqrt{\pi}}{4} +n\frac{\sqrt{\pi}}{4}\\
    &+\frac{n\sqrt{\pi}}{4}(2n+1)(n+\frac{n}{2}+1)+\frac{n(2n+\frac{n}{2})\sqrt{\pi}}{4(n+\frac{n}{2})}(n+\frac{n}{2}-1)\nonumber
\end{align}
and \eqref{mn222} becomes
\begin{align}
     m_{n,\,(2,2)}^{(2)} =&-n(n-1)\frac{\sqrt{\pi}}{2}- \frac{1}{2}n\frac{\sqrt{\pi}}{2}(n+\frac{n}{2})^2+\frac{1}{4}n\sqrt{\pi}\\
    &+ \frac{n\frac{n}{2}}{\sqrt{\pi}\frac{n}{2}}\Big[(\frac{\sqrt{\pi}}{2}(n+\frac{n}{2})+(\frac{n}{2}+1)\frac{\sqrt{\pi}}{2})(\frac{\sqrt{\pi}}{2} +\frac{\sqrt{\pi}}{2}(n+\frac{n}{2}))\Big]\nonumber\\
    &+ \frac{n(2n+\frac{n}{2})}{\sqrt{\pi}(-n-\frac{n}{2})}\Big[(-\frac{\sqrt{\pi}}{2}(n+\frac{n}{2})+(n+\frac{n}{2}-1)\frac{\sqrt{\pi}}{2})(-\frac{\sqrt{\pi}}{2} +\frac{\sqrt{\pi}}{2}(n+\frac{n}{2}))\Big]\nonumber\\
    \label{mn222evaluatedeven}
    =&-n(n-1)\frac{\sqrt{\pi}}{2}- n(n+\frac{n}{2})^2\frac{\sqrt{\pi}}{4}+n\frac{\sqrt{\pi}}{4}\\
    &+ \frac{n\sqrt{\pi}}{4}(2n+1)(n+\frac{n}{2}+1) + \frac{n(2n+\frac{n}{2})\sqrt{\pi}}{4(n+\frac{n}{2})}(n+\frac{n}{2}-1)\nonumber\text{ . }
\end{align}
Since we have shown in \ref{mn2symmetric} that $M_n^{(2)}$ is symmetric, we have that
\eqref{mn212} and \eqref{mn221} become
\begin{align}
    m_{n,\,(1,2)}^{(2)} =& m_{n,\,(2,1)}^{(2)}=\frac{n\frac{n}{2}}{\sqrt{\pi}\frac{n}{2}}\Big[(-\frac{\sqrt{\pi}}{2}(n+\frac{n}{2})+(-\frac{n}{2}-1)\frac{\sqrt{\pi}}{2})(0)+ (0)(\frac{\sqrt{\pi}}{2}+ \frac{\sqrt{\pi}}{2}(n+\frac{n}{2}))\Big]\\
    &+\frac{n(2n+\frac{n}{2})}{\sqrt{\pi}(n+\frac{n}{2})}\Big[(-\frac{\sqrt{\pi}}{2}(n+\frac{n}{2})+(n+\frac{n}{2}-1)\frac{\sqrt{\pi}}{2})(0)+ (0)(\frac{\sqrt{\pi}}{2}+\frac{\sqrt{\pi}}{2}(n+\frac{n}{2}))\Big]\nonumber\\
    =& 0\text{ . }
\end{align}

\subsubsection{Odd $n$}
When $n$ is odd we see that $\rho(\theta) = \cos((n+\frac{n+1}{2})\theta)$. Clearly, since $n>1$, $a_{2n}=b_{2n}=0$ so $M_n^{(2)}$ governs the behavior of $\lambda_n^{(2)}$. Referencing \ref{integralsvalue} we see
\begin{align}
    \mathcal{D}_n &= \mathcal{F}_n = \mathcal{H}_n = \mathcal{I}_n =\mathcal{O}_n = 0\\
    \mathcal{C}_n &= \mathcal{S}_n = \frac{\sqrt{\pi}}{2}(n+\frac{n+1}{2})^2\\
    \mathcal{A}_n &= \mathcal{Q}_n = \frac{\sqrt{\pi}}{2}\text{ . } 
\end{align}
By the same reasoning as for even $n$, integral coefficients involving both $k$ and $n$ survive only when $k=\frac{n+1}{2}$ and $k=2n+\frac{n+1}{2}$. In which case, \ref{integralsvalue} tells us:
\begin{align}
    \mathcal{M}_{n,\frac{n+1}{2}} &=\mathcal{N}_{n,\frac{n+1}{2}}= \mathcal{T}_{n,\frac{n+1}{2}} =\mathcal{U}_{n,\frac{n+1}{2}}= \mathcal{M}_{n,2n+ \frac{n+1}{2}}\\
    &=\mathcal{N}_{n,2n+ \frac{n+1}{2}} =\mathcal{T}_{n,2n+ \frac{n+1}{2}} =\mathcal{U}_{n,2n+ \frac{n+1}{2}} =0\nonumber\\
    \mathcal{K}_{n,\frac{n+1}{2}} &= \mathcal{V}_{n,\frac{n+1}{2}}=\mathcal{K}_{n,2n+ \frac{n+1}{2}} =-\frac{\sqrt{\pi}}{2}(n+\frac{n+1}{2})\\
    \mathcal{L}_{n,\frac{n+1}{2}} &=\mathcal{L}_{n,2n+ \frac{n+1}{2}}=\mathcal{W}_{n,2n+ \frac{n+1}{2}}=\frac{\sqrt{\pi}}{2}\\
    \mathcal{W}_{n,\frac{n+1}{2}} &=-\frac{\sqrt{\pi}}{2}\\
    \mathcal{V}_{n,2n+ \frac{n+1}{2}} &=\frac{\sqrt{\pi}}{2}(n+ \frac{n+1}{2})\text{ . }
\end{align}
We thus evaluate the entries of $M_n^{(2)}$ for this choice of $\rho$. \eqref{mn211} becomes
\begin{align}
    m_{n,\,(1,1)}^{(2)} =& -n(n-1)\frac{\sqrt{\pi}}{2} - \frac{1}{2}n\frac{\sqrt{\pi}}{2}(n+\frac{n+1}{2})^2 +\frac{1}{4}n\sqrt{\pi}\\
    &+ \frac{n\frac{n+1}{2}}{\sqrt{\pi}(n-\frac{n+1}{2})}\Big[(-\frac{\sqrt{\pi}}{2}(n+\frac{n+1}{2})+(\frac{n+1}{2}-n-1)\frac{\sqrt{\pi}}{2})\\
    &\cdot(-\frac{\sqrt{\pi}}{2} -\frac{\sqrt{\pi}}{2}(n+\frac{n+1}{2})\Big]\nonumber\\
    &+\frac{n(2n+\frac{n+1}{2})}{\sqrt{\pi}(-n-\frac{n+1}{2})}\Big[(-\frac{\sqrt{\pi}}{2}(n+\frac{n+1}{2})+(n+\frac{n+1}{2}-1)\frac{\sqrt{\pi}}{2})\nonumber\\
    &\cdot(-\frac{\sqrt{\pi}}{2} +\frac{\sqrt{\pi}}{2}(n+\frac{n+1}{2}))\Big]\nonumber\\
    \label{mn211evaluatedodd}
    =&-n(n-1)\frac{\sqrt{\pi}}{2} - n(n+\frac{n+1}{2})^2\frac{\sqrt{\pi}}{4} +n\frac{\sqrt{\pi}}{4}\\
    &+ \frac{n\frac{n+1}{2}\sqrt{\pi}}{4(n-\frac{n+1}{2})}(2n+1)(n+\frac{n+1}{2}+1)+\frac{n(2n+\frac{n+1}{2})\sqrt{\pi}}{4(n+\frac{n+1}{2})}(n+\frac{n+1}{2}-1)\nonumber
\end{align}
and \eqref{mn222} becomes
\begin{align}
    m_{n,\,(2,2)}^{(2)} =&-n(n-1)\frac{\sqrt{\pi}}{2}- \frac{1}{2}n\frac{\sqrt{\pi}}{2}(n+\frac{n+1}{2})^2  + \frac{1}{4}n\sqrt{\pi}+ \frac{n\frac{n+1}{2}}{\sqrt{\pi}(n-\frac{n+1}{2})}\Big[(\frac{\sqrt{\pi}}{2}(n+\frac{n+1}{2})\\
    &-(\frac{n+1}{2}-n-1)\frac{\sqrt{\pi}}{2})(\frac{\sqrt{\pi}}{2} +\frac{\sqrt{\pi}}{2}(n+\frac{n+1}{2}))\Big]\nonumber\\
    &+ \frac{n(2n+\frac{n+1}{2})}{\sqrt{\pi}(-n-\frac{n+1}{2})}\Big[(-\frac{\sqrt{\pi}}{2}(n+\frac{n+1}{2})+(n+\frac{n+1}{2}-1)\frac{\sqrt{\pi}}{2})\\
    &\cdot(-\frac{\sqrt{\pi}}{2} +\frac{\sqrt{\pi}}{2}(n+\frac{n+1}{2}))\Big]\nonumber\\
    \label{mn222evaluatedodd}
    =&-n(n-1)\frac{\sqrt{\pi}}{2}- n(n+\frac{n+1}{2})^2\frac{\sqrt{\pi}}{4}  + n\frac{\sqrt{\pi}}{4}\\
    &+ \frac{n\frac{n+1}{2}\sqrt{\pi}}{4(n-\frac{n+1}{2})}(2n+1)(n+\frac{n+1}{2}+1)+ \frac{n(2n+\frac{n+1}{2})\sqrt{\pi}}{4(n+\frac{n+1}{2})}(n+\frac{n+1}{2}-1)\text{ . }\nonumber
\end{align}
Since we have shown in \ref{mn2symmetric} that $M_n^{(2)}$ is symmetric, we have that \eqref{mn212} and \eqref{mn221} become
\begin{align}
    m_{n,\,(1,2)}^{(2)}=m_{n,\,(2,1)}^{(2)} =&\frac{n\frac{n+1}{2}}{\sqrt{\pi}(n-\frac{n+1}{2})}\Big[(0)(-\frac{\sqrt{\pi}}{2} -\frac{\sqrt{\pi}}{2}(n+\frac{n+1}{2}))\\
    &+(\frac{\sqrt{\pi}}{2}(n+\frac{n+1}{2})-(\frac{n+1}{2}-n-1)\frac{\sqrt{\pi}}{2})(0)\Big]\nonumber\\
    &+\frac{n(2n+\frac{n+1}{2})}{\sqrt{\pi}(-n-\frac{n+1}{2})}\Big[(0)(-\frac{\sqrt{\pi}}{2} + \frac{\sqrt{\pi}}{2}(n+\frac{n+1}{2}))\nonumber\\
    &+ (-\frac{\sqrt{\pi}}{2}(n+\frac{n+1}{2})+(n+\frac{n+1}{2}-1)\frac{\sqrt{\pi}}{2})(0)\Big]\nonumber\\
    =&0 \text{ . }
\end{align}
\subsection{Proof of Theorem \ref{thm:noballs}}
We first prove the following proposition:
\begin{proposition}
    \label{prop:choiceofrho}
    If $\rho(\theta) = \cos((n+\lceil\frac{n}{2}\rceil)\theta)$ then both branches of the $n$th Steklov Eigenvalue for $\Omega_\varepsilon$ increase in $\varepsilon$ for all $n> 1$.
\end{proposition}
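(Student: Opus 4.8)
The plan is to read off $\lambda_n^{(2)}$ for this one-parameter family directly from the machinery already assembled, and to check that the common second-order coefficient is strictly positive. First I would observe that $\rho(\theta) = \cos((n+\lceil n/2\rceil)\theta)$ is a single cosine mode: in the notation of \eqref{eq:rhoFourier} every Fourier coefficient vanishes except $b_{n+\lceil n/2\rceil} = 1$. Since $n + \lceil n/2\rceil = 3n/2$ for even $n$ and $(3n+1)/2$ for odd $n$, a one-line check shows $n + \lceil n/2\rceil \neq 2n$ whenever $n > 1$, so in particular $a_{2n} = b_{2n} = 0$. By the first-order formula \eqref{lambdan1} this forces $\lambda_n^{(1)} = 0$; equivalently, the $n$th eigenvalue does not split at order $\varepsilon$, and its expansion collapses to $\lambda_n(\varepsilon) = n\sqrt{\pi} + \lambda_n^{(2)}\varepsilon^2 + O(\varepsilon^3)$, with $\lambda_n^{(2)}$ an eigenvalue of the matrix $M_n^{(2)}$ from \eqref{lambdan2eigenvalue}.

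Next I would invoke the explicit evaluation of $M_n^{(2)}$ carried out in Section \ref{evaluationmn2}. For this single-mode $\rho$, the only integral constants coupling the frequencies $k$ and $n$ survive at $k = \lceil n/2\rceil$ and $k = 2n + \lceil n/2\rceil$, and substituting their values shows that the off-diagonal entries $m_{n,(1,2)}^{(2)} = m_{n,(2,1)}^{(2)}$ vanish while the two diagonal entries coincide (this is consistent with the symmetry of $M_n^{(2)}$ established in Section \ref{mn2symmetric}). Hence $M_n^{(2)}$ is a scalar multiple of the identity, its two eigenvalues agree, and \emph{both} branches of the $n$th Steklov eigenvalue share the single second-order coefficient $\lambda_n^{(2)} = m_{n,(1,1)}^{(2)}$.

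It then remains to show $m_{n,(1,1)}^{(2)} > 0$. I would take the evaluated expressions \eqref{mn211evaluatedeven} (even $n$) and \eqref{mn211evaluatedodd} (odd $n$), factor out $\tfrac{n\sqrt{\pi}}{4}$, and collect the bracketed terms into a single rational expression in $n$. In the even case this simplifies to $\lambda_n^{(2)} = \tfrac{n\sqrt{\pi}}{4}\big(\tfrac{3}{4}n^2 + 4n + \tfrac{7}{3}\big)$, which is manifestly positive for all $n > 1$; the odd case reduces to a similar expression whose leading term is again $\tfrac{3}{4}n^2$ (the dominant positive contribution $\tfrac{n+1}{n-1}(2n+1)(m+1)$, with $m = \tfrac{3n+1}{2}$, overwhelming the negative $-m^2$ term), and a direct check confirms it is positive for every odd $n \geq 3$. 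Once positivity is established, the conclusion is immediate: because $\lambda_n^{(1)} = 0$ and $\lambda_n^{(2)} > 0$, each branch behaves like $n\sqrt{\pi} + \lambda_n^{(2)}\varepsilon^2 + O(\varepsilon^3)$ near $\varepsilon = 0$, so both branches strictly increase as $\varepsilon$ moves away from the disk.

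The main obstacle is the bookkeeping in this final step: the even and odd cases must be kept separate because $\lceil n/2\rceil$ behaves differently, and one must confirm that the delicate cancellation leaving $\tfrac{3}{4}n^2$ as the leading coefficient genuinely yields a positive quantity for \emph{all} $n > 1$, rather than failing at small $n$ where the negative $-m^2$ and $-n(n-1)$ contributions are comparatively large.
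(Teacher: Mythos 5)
Your proposal is correct and follows essentially the same route as the paper: use $a_{2n}=b_{2n}=0$ to kill $\lambda_n^{(1)}$, invoke the explicit evaluation showing $M_n^{(2)}$ is a multiple of the identity for this single-mode $\rho$, and verify positivity of the common diagonal entry separately for even and odd $n$ (your even-case expression $\tfrac{n\sqrt{\pi}}{4}(\tfrac{3}{4}n^2+4n+\tfrac{7}{3})$ matches the paper's exactly, and the odd case reduces to the paper's rational expression $\tfrac{\sqrt{\pi}}{8}\cdot\tfrac{9n^5+108n^4+138n^3+36n^2-3n}{6n^2-4n-2}$, which is positive for $n>1$). The only place you are less explicit than the paper is the final algebra in the odd case, which you defer to ``a direct check''; that check is routine but should be carried out, since the negative $-m^2$ and $-n(n-1)$ terms are not obviously dominated for small odd $n$ without it.
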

\begin{proof}
    We see that for all $n>1$, $a_{2n} = b_{2n} = 0$. So, $\lambda_n^{(1)} = 0$ and $M_n^{(2)}$ governs the behavior of $\lambda_n^{(2)}$—as in \eqref{lambdan2eigenvalue}. Thus, it is enough to show that both branches of $\lambda_n^{(2)}$ are positive for this choice of $\rho$. We have shown in \ref{evaluationmn2} that $M_n^{(2)}$ is diagonal in this case; so we simply need to show that for every $n>1$ these diagonal entries are both positive.
    
    Suppose $n>1$ is even. Then the diagonal entries, \eqref{mn211evaluatedeven} and \eqref{mn222evaluatedeven}, are equal. So we must show that:
    \begin{align}
        \lambda_n^{(2)} =& -n(n-1)\frac{\sqrt{\pi}}{2} - n(n+\frac{n}{2})^2\frac{\sqrt{\pi}}{4} +n\frac{\sqrt{\pi}}{4}\\
        &+\frac{n\sqrt{\pi}}{4}(2n+1)(n+\frac{n}{2}+1)+\frac{n(2n+\frac{n}{2})\sqrt{\pi}}{4(n+\frac{n}{2})}(n+\frac{n}{2}-1)>0\text{ . }\nonumber
    \end{align}
    We see:
    \begin{align}
        \lambda_n^{(2)} =& -n(n-1)\frac{\sqrt{\pi}}{2} - n(n+\frac{n}{2})^2\frac{\sqrt{\pi}}{4} +n\frac{\sqrt{\pi}}{4}\\
        &+\frac{n\sqrt{\pi}}{4}(2n+1)(n+\frac{n}{2}+1)+\frac{n(2n+\frac{n}{2})\sqrt{\pi}}{4(n+\frac{n}{2})}(n+\frac{n}{2}-1)\nonumber\\
        &=\frac{\sqrt{\pi}}{4}\Big[\frac{3}{4}n^3+4n^2+\frac{7}{3}n\Big]>0 
    \end{align}
    since $n>0$, by assumption.
    
    Now, suppose $n>1$ is odd. Then the diagonal entries, \eqref{mn211evaluatedodd} and \eqref{mn222evaluatedodd}, are equal. So we must show that:
    \begin{align}
        \lambda_n^{(2)} =& -n(n-1)\frac{\sqrt{\pi}}{2}- n(n+\frac{n+1}{2})^2\frac{\sqrt{\pi}}{4}  + n\frac{\sqrt{\pi}}{4}\\
        &+ \frac{n\frac{n+1}{2}\sqrt{\pi}}{4(n-\frac{n+1}{2})}(2n+1)(n+\frac{n+1}{2}+1)+ \frac{n(2n+\frac{n+1}{2})\sqrt{\pi}}{4(n+\frac{n+1}{2})}(n+\frac{n+1}{2}-1)>0\text{ . }\nonumber
    \end{align}
    We see:
    \begin{align}
        \lambda_n^{(2)} =&  -n(n-1)\frac{\sqrt{\pi}}{2}- n(n+\frac{n+1}{2})^2\frac{\sqrt{\pi}}{4}  + n\frac{\sqrt{\pi}}{4}\\
        &+ \frac{n\frac{n+1}{2}\sqrt{\pi}}{4(n-\frac{n+1}{2})}(2n+1)(n+\frac{n+1}{2}+1)+ \frac{n(2n+\frac{n+1}{2})\sqrt{\pi}}{4(n+\frac{n+1}{2})}(n+\frac{n+1}{2}-1)\nonumber\\
        =& \frac{\sqrt{\pi}}{4}\Big[-2n^2 + 2n -2n^3 - n^2 -\frac{n^3+2n^2+n}{4} + n\nonumber\\
        &+ \frac{\frac{n^2+n}{2}}{n-\frac{n+1}{2}}(3n^2 + 3n + \frac{3n+3}{2})+ \frac{2n^2 + \frac{n^2 + n}{2}}{n+\frac{n+1}{2}}\frac{3n-1}{2}\Big]\nonumber\\
        =& \frac{\sqrt{\pi}}{4}\Big[-\frac{9n^3+14n^2-11n}{4}+ \frac{6n^4 + 15n^3 + 12n^2 + 3n}{2n-2} + \frac{15n^3 -2n^2 - n}{6n+2}\Big]\nonumber\\
        =& \frac{\sqrt{\pi}}{8}\Big[\frac{3n^4+25n^3+49n^2-5n}{2n-2} + \frac{15n^3 -2n^2 - n}{3n+1}\Big]\nonumber\\
        =& \frac{\sqrt{\pi}}{8}\Big[\frac{9n^5+108n^4 + 138n^3+36n^2-3n}{6n^2-4n-2}\Big] > 0
    \end{align}
    since $n>1$, by assumption.
\end{proof}
The proof of our main theorem follows immediately:
\begin{proof}[Proof of Theorem \ref{thm:noballs}]
    For any $n>1$, \ref{prop:choiceofrho} assures that there exists a domain of unit area, $\Omega_\vareps$, such that both branches of the $n$th the Steklov Eigenvalue on that domain are greater than on the disk of unit area, $\Omega_0$.
\end{proof}

\subsubsection*{Acknowledgements.} This project was funded by the Tarble Summer Research Fellowship at Swarthmore College. The authors would also like to acknowledge Yerim Kone and Amy Liu for their contributions during the Summer Research program at Swarthmore in 2023, on which this project is based. Finally, the authors would like to thank Braxton Osting at University of Utah for providing numerical resources for the Steklov problem on nearly-circular domains.

\printbibliography

\appendix
\section{Integral Constants}
\subsection{Computing Integral Constants}
\label{computingintegrals}
This appendix is dedicated to computing values of the constants defined in \ref{integralconstants} in terms of the Fourier coefficients of $\rho$. Proceeding as in \eqref{eq:rhoSquaredFourier}, we apply Cauchy's product for infinite sums, \eqref{cauchyproduct}, to the Fourier expansions for $\rho$ and $\rho^\prime$, \eqref{eq:rhoFourier} \eqref{eq:rhoPrimeFourier}, to calculate the expansions:
\begin{align}
    \rho^2(\theta) =& \frac{1}{2}\sum^{\infty}_{j=0}\sum^{j}_{i=0}[-a_ia_{j-i}(\cos(j\theta)-\cos((j-2i)\theta)) + a_ib_{j-i}(\sin(j\theta)-\sin((j-2i)\theta)) \\
    &+ b_ia_{j-i}(\sin(j\theta)+\sin((j-2i)\theta))+b_ib_{j-i}(\cos(j\theta)+\cos((j-2i)\theta))]\nonumber\\
    {\rho^\prime}^2(\theta) =&\frac{1}{2}\sum^{\infty}_{j=0}\sum^{j}_{i=0} i(j-i)[a_ia_{j-i}(\cos(j\theta) +\cos((j-2i)\theta))\\
    &- a_ib_{j-i}(\sin(j\theta)+\sin((j-2i)\theta))-b_ia_{j-i}(\sin(j\theta)-\sin((j-2i)\theta))\nonumber\\
    &- b_ib_{j-i}(\cos(j\theta)-\cos((j-2i)\theta))]\nonumber\\
    \rho(\theta)\rho^\prime(\theta) =& \frac{1}{2}\sum^{\infty}_{j=0}\sum^{j}_{i=0}(j-i)[a_ia_{j-i}(\sin(j\theta)-\sin((j-2i)\theta))\\
    &+a_ib_{j-i}(\cos(j\theta)-\cos((j-2i)\theta))+b_ia_{j-i}(\cos(j\theta)+\cos((j-2i)\theta))\nonumber\\
    &-b_ib_{j-i}(\sin(j\theta)+\sin((j-2i)\theta))]\nonumber
\end{align}
We again apply from the orthogonality of sine and cosine, \eqref{sinecosine} and \eqref{sinesinecosinecosine}, to first compute our integral constants involving only $n$ ($\mathcal{A}_n$ to $\mathcal{J}_n$ and $\mathcal{O}_n$ to $\mathcal{S}_n$):
\begin{align*}
    \addtocounter{equation}{1}\tag{\theequation}\mathcal{A}_n =&\frac{1}{\sqrt{\pi}}\int_0^{2\pi}\rho^2(\theta)\cos^2(n\theta)d\theta\\
    =&\frac{1}{2\sqrt{\pi}}\int_0^{2\pi}\rho^2(\theta)(1+\cos(2n\theta))d\theta\\
    =&\frac{1}{4\sqrt{\pi}}\int_0^{2\pi}(1+\cos(2n\theta))\sum^{\infty}_{j=0}\sum^{j}_{i=0}[-a_ia_{j-i}(\cos(j\theta)-\cos((j-2i)\theta))\\
    &+ a_ib_{j-i}(\sin(j\theta)-\sin((j-2i)\theta))+ b_ia_{j-i}(\sin(j\theta)+\sin((j-2i)\theta))\\
    &+b_ib_{j-i}(\cos(j\theta)+\cos((j-2i)\theta))]d\theta\\
    =&\frac{1}{4\sqrt{\pi}}\sum^{\infty}_{j=0}\sum^{j}_{i=0}\int_0^{2\pi}(1+\cos(2n\theta))[-a_ia_{j-i}(\cos(j\theta)-\cos((j-2i)\theta))\\
    &+ a_ib_{j-i}(\sin(j\theta)-\sin((j-2i)\theta))+ b_ia_{j-i}(\sin(j\theta)+\sin((j-2i)\theta))\\
    &+b_ib_{j-i}(\cos(j\theta)+\cos((j-2i)\theta))]d\theta\\
    =&\frac{\sqrt{\pi}}{4}[2\sum^{\infty}_{j=0}\sum^{j}_{i=0} (\delta_{j,0}(b_ib_{j-i}-a_ia_{j-i}) + \delta_{j-2i, 0}(a_ia_{j-i} + b_ib_{j-i}))\\
    &+ \sum^{\infty}_{j=0}\sum^{j}_{i=0}(\delta_{j,2n}(-a_ia_{j-i}+b_ib_{j-i}) + (\delta_{j-2i, 2n}+\delta_{2i-j, 2n})(a_ia_{j-i}+b_ib_{j-i}))]\\
    =&\frac{\sqrt{\pi}}{4}[2b_0^2 - 2a_0^2 + 2\sum_{\text{even }j}^\infty(a_{\frac{j}{2}}^2 + b_{\frac{j}{2}}^2)+ \sum_{i=0}^{2n}(b_ib_{2n-i}-a_ia_{2n-i})\\
    &+\sum_{\text{even } j \geq 2n}(a_{\frac{j}{2}-n}a_{\frac{j}{2}+n}+b_{\frac{j}{2}-n}b_{\frac{j}{2}+n})+\sum_{\text{even } j \geq 2n}(a_{n+\frac{j}{2}}a_{\frac{j}{2}-n}+b_{n+\frac{j}{2}}b_{\frac{j}{2}-n})]\\
    =&\frac{\sqrt{\pi}}{4}[2b_0^2 - 2a_0^2 + 2\sum_{j=0}^\infty(a_j^2 + b_j^2)\\
    &+ \sum_{i=0}^{2n}(b_ib_{2n-i}-a_ia_{2n-i}) +2\sum_{j=n}^\infty(b_{j-n}b_{j+n}+a_{j-n}a_{j+n})]\\
    \addtocounter{equation}{1}\tag{\theequation}=&\frac{\sqrt{\pi}}{4}[4b_0^2 + 2   \sum_{j=1}^\infty(a_j^2 + b_j^2) + \sum_{j=0}^{2n}(b_jb_{2n-j}-a_ja_{2n-j}) +2\sum_{j=n}^\infty(b_{j-n}b_{j+n}+a_{j-n}a_{j+n})]\\
    \addtocounter{equation}{1}\tag{\theequation}\mathcal{B}_n =&\frac{1}{\sqrt{\pi}}\int_0^{2\pi}\rho(\theta)\cos^2(n\theta)d\theta\\
    =&\frac{1}{2\sqrt{\pi}}\int_0^{2\pi}\rho(\theta)(1+\cos(2n\theta))d\theta\\
    =&\frac{1}{2\sqrt{\pi}}\int_0^{2\pi}(1+\cos(2n\theta))\sum_{j=0}^\infty a_j\sin(j\theta) + b_j\cos(j\theta)d\theta\\
    =&\frac{1}{2\sqrt{\pi}}\sum_{j=0}^\infty \int_0^{2\pi}(1+\cos(2n\theta))(a_j\sin(j\theta) + b_j\cos(j\theta))d\theta\\
    \addtocounter{equation}{1}\tag{\theequation}=&\frac{\sqrt{\pi}}{2}(2b_0 + b_{2n})\\
    \addtocounter{equation}{1}\tag{\theequation}\mathcal{C}_n =&\frac{1}{\sqrt{\pi}}\int_0^{2\pi}{\rho^\prime}^2(\theta)\cos^2(n\theta)d\theta\\
    =&\frac{1}{2\sqrt{\pi}}\int_0^{2\pi}{\rho^\prime}^2(\theta)(1+\cos(2n\theta))d\theta\\
    =&\frac{1}{4\sqrt{\pi}}\int_0^{2\pi}(1+\cos(2n\theta))\sum^{\infty}_{j=0}\sum^{j}_{i=0} i(j-i)[a_ia_{j-i}(\cos(j\theta) +\cos((j-2i)\theta))\\
    &- a_ib_{j-i}(\sin(j\theta)+\sin((j-2i)\theta))-b_ia_{j-i}(\sin(j\theta)-\sin((j-2i)\theta))\\
    &- b_ib_{j-i}(\cos(j\theta)-\cos((j-2i)\theta))] d\theta\\
    =&\frac{1}{4\sqrt{\pi}}\sum^{\infty}_{j=0}\sum^{j}_{i=0} \int_0^{2\pi}(1+\cos(2n\theta))i(j-i)[a_ia_{j-i}(\cos(j\theta) +\cos((j-2i)\theta))\\
    &- a_ib_{j-i}(\sin(j\theta)+\sin((j-2i)\theta))-b_ia_{j-i}(\sin(j\theta)-\sin((j-2i)\theta))\\
    &- b_ib_{j-i}(\cos(j\theta)-\cos((j-2i)\theta))] d\theta\\
    =&\frac{\sqrt{\pi}}{4} [2\sum^{\infty}_{j=0}\sum^{j}_{i=0} \delta_{j-2i, 0}i(j-i)(a_ia_{j-i} + b_ib_{j-i})\\
    &+ \sum^{\infty}_{j=0}\sum^{j}_{i=0}(\delta_{j, 2n}i(j-i)(a_ia_{j-i} - b_ib_{j-i})\\
    &+ (\delta_{j-2i, 2n}+ \delta_{2i-j, 2n})i(j-i)(a_ia_{j-i} + b_ib_{j-i}))]\\
    =&\frac{\sqrt{\pi}}{4} [2\sum_{\text{even }j}(\frac{j}{2})^2(a_{\frac{j}{2}}^2 + b_{\frac{j}{2}}^2) + \sum_{i=0}^{2n}i(2n-i)(a_ia_{2n-i} - b_ib_{2n-i})\\
    &+ \sum_{\text{even }j\geq 2n} (\frac{j}{2}-n)(\frac{j}{2}+n)(a_{\frac{j}{2}-n}a_{\frac{j}{2}+n} + b_{\frac{j}{2}-n}b_{\frac{j}{2}+n})\\
    &+\sum_{\text{even }j\geq 2n} (n+\frac{j}{2})(\frac{j}{2}-n)(a_{n+\frac{j}{2}}a_{\frac{j}{2}-n} + b_{n+\frac{j}{2}}b_{\frac{j}{2}-n})]\\
    =&\frac{\sqrt{\pi}}{4} [2\sum_{j=0}^\infty j^2(a_j^2 + b_j^2)+ \sum_{i=0}^{2n}i(2n-i)(a_ia_{2n-i} - b_ib_{2n-i})\\
    &+2\sum_{j=n}^\infty (j-n)(j+n)(a_{j-n}a_{j+n} + b_{j-n}b_{j+n})]\\
    \addtocounter{equation}{1}\tag{\theequation}=&\frac{\sqrt{\pi}}{4} [2\sum_{j=1}^\infty j^2(a_j^2 + b_j^2)+ \sum_{j=0}^{2n}j(2n-j)(a_ja_{2n-j} - b_jb_{2n-j})\\
    &+2\sum_{j=n}^\infty (j^2-n^2)(a_{j-n}a_{j+n} + b_{j-n}b_{j+n})]\\
    \addtocounter{equation}{1}\tag{\theequation}\mathcal{D}_n =&\frac{1}{\sqrt{\pi}}\int_0^{2\pi}\rho(\theta)\rho^\prime(\theta)\sin(n\theta)\cos(n\theta)d\theta\\
    =&\frac{1}{2\sqrt{\pi}}\int_0^{2\pi}\rho(\theta)\rho^\prime(\theta)\sin(2n\theta)d\theta\\
    =&\frac{1}{4\sqrt{\pi}}\int_0^{2\pi}\sin(2n\theta)\sum^{\infty}_{j=0}\sum^{j}_{i=0} (j-i)[a_ia_{j-i}(\sin(j\theta)-\sin((j-2i)\theta))\\
    &+a_ib_{j-i}(\cos(j\theta)-\cos((j-2i)\theta))+b_ia_{j-i}(\cos(j\theta)+\cos((j-2i)\theta))\\
    &-b_ib_{j-i}(\sin(j\theta)+\sin((j-2i)\theta))]d\theta\\
    =&\frac{1}{4\sqrt{\pi}}\sum^{\infty}_{j=0}\sum^{j}_{i=0} \int_0^{2\pi}\sin(2n\theta)(j-i)[a_ia_{j-i}(\sin(j\theta)-\sin((j-2i)\theta))\\
    &+a_ib_{j-i}(\cos(j\theta)-\cos((j-2i)\theta))+b_ia_{j-i}(\cos(j\theta)+\cos((j-2i)\theta))\\
    &-b_ib_{j-i}(\sin(j\theta)+\sin((j-2i)\theta))]d\theta\\
    =&\frac{\sqrt{\pi}}{4}\sum^{\infty}_{j=0}\sum^{j}_{i=0} (\delta_{j, 2n}(j-i)(a_ia_{j-i}-b_ib_{j-i})\\
    &+ (\delta_{j-2i, 2n}-\delta_{2i-j, 2n})(j-i)(-a_ia_{j-i}-b_ib_{j-i}))\\
    =&\frac{\sqrt{\pi}}{4}[\sum_{i=0}^{2n}(2n-i)(a_{i}a_{2n-i} - b_ib_{2n-i}) + \sum_{\text{even }j\geq 2n }(\frac{j}{2}+n)(-a_{\frac{j}{2}-n}a_{\frac{j}{2}+n} - b_{\frac{j}{2}-n}b_{\frac{j}{2}+n})]\\
    &-\sum_{\text{even }j\geq 2n }(\frac{j}{2}-n)(-a_{n+\frac{j}{2}}a_{\frac{j}{2}-n} - b_{n+\frac{j}{2}}b_{\frac{j}{2}-n})]\\
    =&\frac{\sqrt{\pi}}{4}[\sum_{i=0}^{2n}(2n-i)(a_{i}a_{2n-i} - b_ib_{2n-i}) + \sum_{j=n}^\infty(j+n)(-a_{j-n}a_{j+n} - b_{j-n}b_{j+n})\\
    &-\sum_{j=n}^\infty(j-n)(-a_{j-n}a_{j+n} - b_{j-n}b_{j+n})]\\
    =&\frac{\sqrt{\pi}}{4}[\sum_{i=0}^{2n}(2n-i)(a_{i}a_{2n-i} - b_ib_{2n-i})\\
    &+ \sum_{j=n}^\infty((j+n)-(j-n))(-a_{j-n}a_{j+n} - b_{j-n}b_{j+n})\\
    =&\frac{\sqrt{\pi}}{4}[\sum_{i=0}^{2n}(2n-i)(a_{i}a_{2n-i} - b_ib_{2n-i}) + 2n\sum_{j=n}^\infty(-a_{j-n}a_{j+n} - b_{j-n}b_{j+n})\\
    \addtocounter{equation}{1}\tag{\theequation}=&\frac{\sqrt{\pi}}{4}[\sum_{j=0}^{2n}(2n-j)(a_{j}a_{2n-j} - b_jb_{2n-j}) - 2n\sum_{j=n}^\infty(a_{j-n}a_{j+n} + b_{j-n}b_{j+n})]\\
    \addtocounter{equation}{1}\tag{\theequation}\mathcal{E}_n =&\frac{1}{\sqrt{\pi}}\int_0^{2\pi}\rho^\prime(\theta)\sin(n\theta)\cos(n\theta)d\theta\\
    =&\frac{1}{2\sqrt{\pi}}\int_0^{2\pi}\rho^\prime(\theta)\sin(2n\theta)d\theta\\
    =&\frac{1}{2\sqrt{\pi}}\int_0^{2\pi}\sin(2n\theta)\sum_{j=0}^\infty ja_j\cos(j\theta) - jb_j\sin(j\theta) d\theta\\
    =&\frac{1}{2\sqrt{\pi}}\sum_{j=0}^\infty\int_0^{2\pi}\sin(2n\theta)(ja_j\cos(j\theta) - jb_j\sin(j\theta)) d\theta\\
    \addtocounter{equation}{1}\tag{\theequation}=&-n\sqrt{\pi}b_{2n}\\
    \addtocounter{equation}{1}\tag{\theequation}\mathcal{F}_n =&\frac{1}{\sqrt{\pi}}\int_0^{2\pi}\rho^2(\theta)\sin(n\theta)\cos(n\theta)d\theta\\
    =&\frac{1}{2\sqrt{\pi}}\int_0^{2\pi}\rho^2(\theta)\sin(2n\theta)d\theta\\
    =&\frac{1}{4\sqrt{\pi}}\int_0^{2\pi}\sin(2n\theta)\sum^{\infty}_{j=0}\sum^{j}_{i=0}[-a_ia_{j-i}(\cos(j\theta)-\cos((j-2i)\theta))\\
    &+ a_ib_{j-i}(\sin(j\theta)-\sin((j-2i)\theta))+ b_ia_{j-i}(\sin(j\theta)+\sin((j-2i)\theta))\\
    &+b_ib_{j-i}(\cos(j\theta)+\cos((j-2i)\theta))]d\theta\\
    =&\frac{1}{4\sqrt{\pi}}\sum^{\infty}_{j=0}\sum^{j}_{i=0}\int_0^{2\pi}\sin(2n\theta)[-a_ia_{j-i}(\cos(j\theta)-\cos((j-2i)\theta))\\
    &+ a_ib_{j-i}(\sin(j\theta)-\sin((j-2i)\theta))+ b_ia_{j-i}(\sin(j\theta)+\sin((j-2i)\theta))\\
    &+b_ib_{j-i}(\cos(j\theta)+\cos((j-2i)\theta))]d\theta\\
    =&\frac{\sqrt{\pi}}{4}[\sum^{\infty}_{j=0}\sum^{j}_{i=0}(\delta_{j,2n}(a_ib_{j-i}+b_ia_{j-i}) + (\delta_{j-2i, 2n}-\delta_{2i-j, 2n})(-a_ib_{j-i}+b_ia_{j-i}))]\\
    =&\frac{\sqrt{\pi}}{4}[\sum_{i=0}^{2n}(a_ib_{2n-i} + b_ia_{2n-i}) + \sum_{\text{even }j \geq 2n}(-a_{\frac{j}{2}-n}b_{\frac{j}{2}+n} + b_{\frac{j}{2}-n}a_{\frac{j}{2}+n})\\
    &- \sum_{\text{even }j \geq 2n}(-a_{n+\frac{j}{2}}b_{\frac{j}{2}-n} + b_{n+\frac{j}{2}}a_{\frac{j}{2}-n})]\\
    =&\frac{\sqrt{\pi}}{4}[\sum_{i=0}^{2n}(a_ib_{2n-i} + b_ia_{2n-i}) + \sum_{j=n}^\infty(-a_{j-n}b_{j+n} + b_{j-n}a_{j+n})\\
    &- \sum_{j=n}^\infty(-a_{n+j}b_{j-n} + b_{n+j}a_{j-n})]\\
    =&\frac{\sqrt{\pi}}{4}[\sum_{i=0}^{2n}(a_ib_{2n-i} + b_ia_{2n-i}) + 2\sum_{j=n}^\infty(-a_{j-n}b_{j+n} + b_{j-n}a_{j+n})]\\
    \addtocounter{equation}{1}\tag{\theequation}=&\frac{\sqrt{\pi}}{4}[\sum_{j=0}^{2n}(a_jb_{2n-j} + b_ja_{2n-j}) + 2\sum_{j=n}^\infty(b_{j-n}a_{j+n}-a_{j-n}b_{j+n})]\\
    \addtocounter{equation}{1}\tag{\theequation}\mathcal{G}_n =&\frac{1}{\sqrt{\pi}}\int_0^{2\pi}\rho(\theta)\sin(n\theta)\cos(n\theta)d\theta\\
    =& \frac{1}{2\sqrt{\pi}}\int_0^{2\pi}\rho(\theta)\sin(2n\theta)d\theta\\
    =& \frac{1}{2\sqrt{\pi}}\int_0^{2\pi}\sin(2n\theta) \sum_{j=0}^\infty a_j\sin(j\theta) + b_j \cos(j\theta) d\theta\\
    =&\frac{1}{2\sqrt{\pi}}\sum_{j=0}^\infty\int_0^{2\pi}\sin(2n\theta)(a_j\sin(j\theta) + b_j \cos(j\theta))d\theta\\
    \addtocounter{equation}{1}\tag{\theequation}=&\frac{\sqrt{\pi}}{2}a_{2n}\\
    \addtocounter{equation}{1}\tag{\theequation}\mathcal{H}_n =&\frac{1}{\sqrt{\pi}}\int_0^{2\pi}{\rho^\prime}^2(\theta)\sin(n\theta)\cos(n\theta)d\theta\\
    =&\frac{1}{2\sqrt{\pi}}\int_0^{2\pi}{\rho^\prime}^2(\theta)\sin(2n\theta)d\theta\\
    =&\frac{1}{4\sqrt{\pi}}\int_0^{2\pi}\sin(2n\theta)\sum^{\infty}_{j=0}\sum^{j}_{i=0} i(j-i)[a_ia_{j-i}(\cos(j\theta) +\cos((j-2i)\theta))\\
    &- a_ib_{j-i}(\sin(j\theta)+\sin((j-2i)\theta))-b_ia_{j-i}(\sin(j\theta)-\sin((j-2i)\theta))\\
    &- b_ib_{j-i}(\cos(j\theta)-\cos((j-2i)\theta))] d\theta\\
    =&\frac{1}{4\sqrt{\pi}}\sum^{\infty}_{j=0}\sum^{j}_{i=0} \int_0^{2\pi}\sin(2n\theta)i(j-i)[a_ia_{j-i}(\cos(j\theta) +\cos((j-2i)\theta))\\
    &- a_ib_{j-i}(\sin(j\theta)+\sin((j-2i)\theta))-b_ia_{j-i}(\sin(j\theta)-\sin((j-2i)\theta))\\
    &- b_ib_{j-i}(\cos(j\theta)-\cos((j-2i)\theta))] d\theta\\
    =&\frac{\sqrt{\pi}}{4} [\sum^{\infty}_{j=0}\sum^{j}_{i=0}(\delta_{j, 2n}i(j-i)(-a_ib_{j-i} - b_ia_{j-i})\\
    &+ (\delta_{j-2i, 2n} - \delta_{2i-j, 2n})i(j-i)(-a_ib_{j-i} + b_ia_{j-i}))]\\
    =&\frac{\sqrt{\pi}}{4} [\sum_{i=0}^{2n}i(2n-i)(-a_ib_{2n-i} - b_ia_{2n-i})\\
    &+ \sum_{\text{even }j\geq 2n} (\frac{j}{2}-n)(\frac{j}{2}+n)(-a_{\frac{j}{2}-n}b_{\frac{j}{2}+n} + b_{\frac{j}{2}-n}a_{\frac{j}{2}+n})\\
    &-\sum_{\text{even }j\geq 2n} (n+\frac{j}{2})(\frac{j}{2}-n)(-a_{n+\frac{j}{2}}b_{\frac{j}{2}-n} + b_{n+\frac{j}{2}}a_{\frac{j}{2}-n})]\\
    =&\frac{\sqrt{\pi}}{4} [\sum_{i=0}^{2n}i(2n-i)(-a_ib_{2n-i} - b_ia_{2n-i})\\
    &+ \sum_{j=n}^\infty (j-n)(j+n)(-a_{j-n}b_{j+n} + b_{j-n}a_{j+n})\\
    &-\sum_{j=n}^\infty (n+j)(j-n)(-a_{n+j}b_{j-n} + b_{n+j}a_{j-n})]\\
    =&\frac{\sqrt{\pi}}{4} [\sum_{i=0}^{2n}i(2n-i)(-a_ib_{2n-i} - b_ia_{2n-i})+2\sum_{j=n}^\infty (j^2-n^2)(-a_{j-n}b_{j+n} + b_{j-n}a_{j+n})]\\
    \addtocounter{equation}{1}\tag{\theequation}=&\frac{\sqrt{\pi}}{4} [\sum_{j=0}^{2n}j(2n-j)(-a_jb_{2n-j} - b_ja_{2n-j})+2\sum_{j=n}^\infty (j^2-n^2)(b_{j-n}a_{j+n}-a_{j-n}b_{j+n})]\\
    \addtocounter{equation}{1}\tag{\theequation}\mathcal{I}_n =&\frac{1}{\sqrt{\pi}}\int_0^{2\pi}\rho(\theta)\rho^\prime(\theta)\cos^2(n\theta)d\theta\\
    =&\frac{1}{2\sqrt{\pi}}\int_0^{2\pi}\rho(\theta)\rho^\prime(\theta)(1+\cos(2n\theta))d\theta\\
    =&\frac{1}{4\sqrt{\pi}}\int_0^{2\pi}(1+\cos(2n\theta))\sum^{\infty}_{j=0}\sum^{j}_{i=0} (j-i)[a_ia_{j-i}(\sin(j\theta)-\sin((j-2i)\theta))\\
    &+a_ib_{j-i}(\cos(j\theta)-\cos((j-2i)\theta))+b_ia_{j-i}(\cos(j\theta)+\cos((j-2i)\theta))\\
    &-b_ib_{j-i}(\sin(j\theta)+\sin((j-2i)\theta))]d\theta\\
    =&\frac{1}{4\sqrt{\pi}}\sum^{\infty}_{j=0}\sum^{j}_{i=0} \int_0^{2\pi}(1+\cos(2n\theta))(j-i)[a_ia_{j-i}(\sin(j\theta)-\sin((j-2i)\theta))\\
    &+a_ib_{j-i}(\cos(j\theta)-\cos((j-2i)\theta))+b_ia_{j-i}(\cos(j\theta)+\cos((j-2i)\theta))\\
    &-b_ib_{j-i}(\sin(j\theta)+\sin((j-2i)\theta))]d\theta\\
    =&\frac{\sqrt{\pi}}{4}[2\sum^{\infty}_{j=0}\sum^{j}_{i=0} \delta_{j-2i, 0}(j-i)(-a_ib_{j-i}+b_ia_{j-i})\\
    &+ \sum^{\infty}_{j=0}\sum^{j}_{i=0} (\delta_{j, 2n}(j-i)(a_ib_{j-i}+b_ia_{j-i})\\
    &+ (\delta_{j-2i, 2n}+\delta_{2i-j, 2n})(j-i)(-a_ib_{j-i}+b_ia_{j-i}))]\\
    =&\frac{\sqrt{\pi}}{4}[2\sum_{\text{even }j }\frac{j}{2}(-a_{\frac{j}{2}}b_{\frac{j}{2}} + b_{\frac{j}{2}}a_{\frac{j}{2}}) + \sum_{i=0}^{2n}(2n-i)(a_{i}b_{2n-i} + b_ia_{2n-i})\\
    &+ \sum_{\text{even }j\geq 2n }(\frac{j}{2}+n)(-a_{\frac{j}{2}-n}b_{\frac{j}{2}+n} + b_{\frac{j}{2}-n}a_{\frac{j}{2}+n})]\\
    &+\sum_{\text{even }j\geq 2n }(\frac{j}{2}-n)(-a_{n+\frac{j}{2}}b_{\frac{j}{2}-n} + b_{n+\frac{j}{2}}a_{\frac{j}{2}-n})]\\
    =&\frac{\sqrt{\pi}}{4}[2\sum_{j=0}^\infty (j(0))+\sum_{i=0}^{2n}(2n-i)(a_{i}b_{2n-i} + b_ia_{2n-i})\\
    &+ \sum_{j=n}^\infty(j+n)(-a_{j-n}b_{j+n} + b_{j-n}a_{j+n})+\sum_{j=n}^\infty(j-n)(-a_{j-n}b_{j+n} + b_{j-n}a_{j+n})]\\
    =&\frac{\sqrt{\pi}}{4}[\sum_{i=0}^{2n}(2n-i)(a_{i}b_{2n-i} + b_ia_{2n-i})\\
    &+ \sum_{j=n}^\infty((j+n)-(j-n))(-a_{j-n}b_{j+n} + b_{j-n}a_{j+n})\\
    =&\frac{\sqrt{\pi}}{4}[\sum_{i=0}^{2n}(2n-i)(a_{i}b_{2n-i} + b_ia_{2n-i}) + 2n\sum_{j=n}^\infty(-a_{j-n}b_{j+n} + b_{j-n}a_{j+n})\\
    \addtocounter{equation}{1}\tag{\theequation}=&\frac{\sqrt{\pi}}{4}[\sum_{j=0}^{2n}(2n-j)(a_{j}b_{2n-j} + b_ja_{2n-j}) + 2n\sum_{j=n}^\infty(b_{j-n}a_{j+n}-a_{j-n}b_{j+n})]\\
    \addtocounter{equation}{1}\tag{\theequation}\mathcal{J}_n =&\frac{1}{\sqrt{\pi}}\int_0^{2\pi}\rho^\prime(\theta)\cos^2(n\theta)d\theta\\
    =&\frac{1}{2\sqrt{\pi}}\int_0^{2\pi}\rho^\prime(\theta)(1+\cos(2n\theta))d\theta\\
    =&\frac{1}{2\sqrt{\pi}}\int_0^{2\pi}(1+\cos(2n\theta))\sum_{j=0}^\infty ja_j\cos(j\theta) - jb_j\sin(j\theta)d\theta\\
    =&\frac{1}{2\sqrt{\pi}}\sum_{j=0}^\infty\int_0^{2\pi}(1+\cos(2n\theta))( ja_j\cos(j\theta) - jb_j\sin(j\theta))d\theta\\
    \addtocounter{equation}{1}\tag{\theequation}=&n\sqrt{\pi}a_{2n}\\
    \addtocounter{equation}{1}\tag{\theequation}\mathcal{O}_n =&\frac{1}{\sqrt{\pi}}\int_0^{2\pi}\rho(\theta)\rho^\prime(\theta)\sin^2(n\theta)d\theta\\
    =&\frac{1}{2\sqrt{\pi}}\int_0^{2\pi}\rho(\theta)\rho^\prime(\theta)(1-\cos(2n\theta))d\theta\\
    =&\frac{1}{4\sqrt{\pi}}\int_0^{2\pi}(1-\cos(2n\theta))\sum^{\infty}_{j=0}\sum^{j}_{i=0} (j-i)[a_ia_{j-i}(\sin(j\theta)-\sin((j-2i)\theta))\\
    &+a_ib_{j-i}(\cos(j\theta)-\cos((j-2i)\theta))+b_ia_{j-i}(\cos(j\theta)+\cos((j-2i)\theta))\\
    &-b_ib_{j-i}(\sin(j\theta)+\sin((j-2i)\theta))]d\theta\\
    =&\frac{1}{4\sqrt{\pi}}\sum^{\infty}_{j=0}\sum^{j}_{i=0} \int_0^{2\pi}(1-\cos(2n\theta))(j-i)[a_ia_{j-i}(\sin(j\theta)-\sin((j-2i)\theta))\\
    &+a_ib_{j-i}(\cos(j\theta)-\cos((j-2i)\theta))+b_ia_{j-i}(\cos(j\theta)+\cos((j-2i)\theta))\\
    &-b_ib_{j-i}(\sin(j\theta)+\sin((j-2i)\theta))]d\theta\\
    =&\frac{\sqrt{\pi}}{4}[2\sum^{\infty}_{j=0}\sum^{j}_{i=0} \delta_{j-2i, 0}(j-i)(-a_ib_{j-i}+b_ia_{j-i})\\
    &- \sum^{\infty}_{j=0}\sum^{j}_{i=0} (\delta_{j, 2n}(j-i)(a_ib_{j-i}+b_ia_{j-i})\\
    &+ (\delta_{j-2i, 2n}+\delta_{2i-j, 2n})(j-i)(-a_ib_{j-i}+b_ia_{j-i}))]\\
    =&\frac{\sqrt{\pi}}{4}[2\sum_{\text{even }j }\frac{j}{2}(-a_{\frac{j}{2}}b_{\frac{j}{2}} + b_{\frac{j}{2}}a_{\frac{j}{2}}) - \sum_{i=0}^{2n}(2n-i)(a_{i}b_{2n-i} + b_ia_{2n-i})\\
    &- \sum_{\text{even }j\geq 2n }(\frac{j}{2}+n)(-a_{\frac{j}{2}-n}b_{\frac{j}{2}+n} + b_{\frac{j}{2}-n}a_{\frac{j}{2}+n})]\\
    &-\sum_{\text{even }j\geq 2n }(\frac{j}{2}-n)(-a_{n+\frac{j}{2}}b_{\frac{j}{2}-n} + b_{n+\frac{j}{2}}a_{\frac{j}{2}-n})]\\
    =&\frac{\sqrt{\pi}}{4}[2\sum_{j=0}^\infty (j(0))-\sum_{i=0}^{2n}(2n-i)(a_{i}b_{2n-i} + b_ia_{2n-i})\\
    &- \sum_{j=n}^\infty(j+n)(-a_{j-n}b_{j+n} + b_{j-n}a_{j+n})-\sum_{j=n}^\infty(j-n)(-a_{j-n}b_{j+n} + b_{j-n}a_{j+n})]\\
    =&\frac{\sqrt{\pi}}{4}[-\sum_{i=0}^{2n}(2n-i)(a_{i}b_{2n-i} + b_ia_{2n-i})\\
    &- \sum_{j=n}^\infty((j+n)-(j-n))(-a_{j-n}b_{j+n} + b_{j-n}a_{j+n})\\
    =&\frac{\sqrt{\pi}}{4}[-\sum_{i=0}^{2n}(2n-i)(a_{i}b_{2n-i} + b_ia_{2n-i}) - 2n\sum_{j=n}^\infty(-a_{j-n}b_{j+n} + b_{j-n}a_{j+n})\\
    \addtocounter{equation}{1}\tag{\theequation}=&\frac{\sqrt{\pi}}{4}[-\sum_{j=0}^{2n}(2n-j)(a_{j}b_{2n-j} + b_ja_{2n-j}) - 2n\sum_{j=n}^\infty(b_{j-n}a_{j+n}-a_{j-n}b_{j+n})]\\
    \addtocounter{equation}{1}\tag{\theequation}\mathcal{P}_n =&\frac{1}{\sqrt{\pi}}\int_0^{2\pi}\rho^\prime(\theta)\sin^2(n\theta)d\theta\\
    =&\frac{1}{2\sqrt{\pi}}\int_0^{2\pi}\rho^\prime(\theta)(1-\cos(2n\theta))d\theta\\
    =&\frac{1}{2\sqrt{\pi}}\int_0^{2\pi}(1-\cos(2n\theta))\sum_{j=0}^\infty ja_j\cos(j\theta) - jb_j\sin(j\theta)d\theta\\
    =&\frac{1}{2\sqrt{\pi}}\sum_{j=0}^\infty\int_0^{2\pi}(1-\cos(2n\theta))(ja_j\cos(j\theta) - jb_j\sin(j\theta))d\theta\\
    \addtocounter{equation}{1}\tag{\theequation}=&-n\sqrt{\pi}a_{2n}\\
    \addtocounter{equation}{1}\tag{\theequation}\mathcal{Q}_n =&\frac{1}{\sqrt{\pi}}\int_0^{2\pi}\rho^2(\theta)\sin^2(n\theta)d\theta\\
    =&\frac{1}{2\sqrt{\pi}}\int_0^{2\pi}\rho^2(\theta)(1-\cos(2n\theta))d\theta\\
    =&\frac{1}{4\sqrt{\pi}}\int_0^{2\pi}(1-\cos(2n\theta))\sum^{\infty}_{j=0}\sum^{j}_{i=0}[-a_ia_{j-i}(\cos(j\theta)-\cos((j-2i)\theta))\\
    &+ a_ib_{j-i}(\sin(j\theta)-\sin((j-2i)\theta))+ b_ia_{j-i}(\sin(j\theta)+\sin((j-2i)\theta))\\
    &+b_ib_{j-i}(\cos(j\theta)+\cos((j-2i)\theta))]d\theta\\
    =&\frac{1}{4\sqrt{\pi}}\sum^{\infty}_{j=0}\sum^{j}_{i=0}\int_0^{2\pi}(1-\cos(2n\theta))[-a_ia_{j-i}(\cos(j\theta)-\cos((j-2i)\theta))\\
    &+ a_ib_{j-i}(\sin(j\theta)-\sin((j-2i)\theta))+ b_ia_{j-i}(\sin(j\theta)+\sin((j-2i)\theta))\\
    &+b_ib_{j-i}(\cos(j\theta)+\cos((j-2i)\theta))]d\theta\\
    =&\frac{\sqrt{\pi}}{4}[2\sum^{\infty}_{j=0}\sum^{j}_{i=0} (\delta_{j,0}(b_ib_{j-i}-a_ia_{j-i}) + \delta_{j-2i, 0}(a_ia_{j-i} + b_ib_{j-i}))\\
    &- \sum^{\infty}_{j=0}\sum^{j}_{i=0}(\delta_{j,2n}(-a_ia_{j-i}+b_ib_{j-i}) + (\delta_{j-2i, 2n}+\delta_{2i-j, 2n})(a_ia_{j-i}+b_ib_{j-i}))]\\
    =&\frac{\sqrt{\pi}}{4}[2b_0^2 - 2a_0^2 + 2\sum_{\text{even }j}^\infty(a_{\frac{j}{2}}^2 + b_{\frac{j}{2}}^2)- \sum_{i=0}^{2n}(b_ib_{2n-i}-a_ia_{2n-i})\\
    &-\sum_{\text{even } j \geq 2n}(a_{\frac{j}{2}-n}a_{\frac{j}{2}+n}+b_{\frac{j}{2}-n}b_{\frac{j}{2}+n})-\sum_{\text{even } j \geq 2n}(a_{n+\frac{j}{2}}a_{\frac{j}{2}-n}+b_{n+\frac{j}{2}}b_{\frac{j}{2}-n})]\\
    =&\frac{\sqrt{\pi}}{4}[2b_0^2 - 2a_0^2 + 2\sum_{j=0}^\infty(a_j^2 + b_j^2)\\
    &- \sum_{i=0}^{2n}(b_ib_{2n-i}-a_ia_{2n-i}) -2\sum_{j=n}^\infty(b_{j-n}b_{j+n}+a_{j-n}a_{j+n})]\\
    \addtocounter{equation}{1}\tag{\theequation}=&\frac{\sqrt{\pi}}{4}[4b_0^2 + 2\sum_{j=1}^\infty(a_j^2 + b_j^2) - \sum_{j=0}^{2n}(b_jb_{2n-j}-a_ja_{2n-j}) -2\sum_{j=n}^\infty(b_{j-n}b_{j+n}+a_{j-n}a_{j+n})]\\
    \addtocounter{equation}{1}\tag{\theequation}\mathcal{R}_n =&\frac{1}{\sqrt{\pi}}\int_0^{2\pi}\rho(\theta)\sin^2(n\theta)d\theta\\
    =&\frac{1}{2\sqrt{\pi}}\int_0^{2\pi}\rho(\theta)(1-\cos(2n\theta))d\theta\\
    =&\frac{1}{2\sqrt{\pi}}\int_0^{2\pi}(1-\cos(2n\theta))\sum_{j=0}^\infty a_j\sin(j\theta) + b_j\cos(j\theta)d\theta\\
    =&\frac{1}{2\sqrt{\pi}}\sum_{j=0}^\infty \int_0^{2\pi}(1-\cos(2n\theta))(a_j\sin(j\theta) + b_j\cos(j\theta))d\theta\\
    \addtocounter{equation}{1}\tag{\theequation}=&\frac{\sqrt{\pi}}{2}(2b_0 -b_{2n})\\
    \addtocounter{equation}{1}\tag{\theequation}\mathcal{S}_n =&\frac{1}{\sqrt{\pi}}\int_0^{2\pi}{\rho^\prime}^2(\theta)\sin^2(n\theta)d\theta\\
    =&\frac{1}{2\sqrt{\pi}}\int_0^{2\pi}{\rho^\prime}^2(\theta)(1-\cos(2n\theta))d\theta\\
    =&\frac{1}{4\sqrt{\pi}}\int_0^{2\pi}(1-\cos(2n\theta))\sum^{\infty}_{j=0}\sum^{j}_{i=0} i(j-i)[a_ia_{j-i}(\cos(j\theta) +\cos((j-2i)\theta))\\
    &- a_ib_{j-i}(\sin(j\theta)+\sin((j-2i)\theta))-b_ia_{j-i}(\sin(j\theta)-\sin((j-2i)\theta))\\
    &- b_ib_{j-i}(\cos(j\theta)-\cos((j-2i)\theta))] d\theta\\
    =&\frac{1}{4\sqrt{\pi}}\sum^{\infty}_{j=0}\sum^{j}_{i=0} \int_0^{2\pi}(1-\cos(2n\theta))i(j-i)[a_ia_{j-i}(\cos(j\theta) +\cos((j-2i)\theta))\\
    &- a_ib_{j-i}(\sin(j\theta)+\sin((j-2i)\theta))-b_ia_{j-i}(\sin(j\theta)-\sin((j-2i)\theta))\\
    &- b_ib_{j-i}(\cos(j\theta)-\cos((j-2i)\theta))] d\theta\\
    =&\frac{\sqrt{\pi}}{4} [2\sum^{\infty}_{j=0}\sum^{j}_{i=0} \delta_{j-2i, 0}i(j-i)(a_ia_{j-i} + b_ib_{j-i})\\
    &- \sum^{\infty}_{j=0}\sum^{j}_{i=0}(\delta_{j, 2n}i(j-i)(a_ia_{j-i} - b_ib_{j-i})\\
    &+ (\delta_{j-2i, 2n}+ \delta_{2i-j, 2n})i(j-i)(a_ia_{j-i} + b_ib_{j-i}))]\\
    =&\frac{\sqrt{\pi}}{4} [2\sum_{\text{even }j}(\frac{j}{2})^2(a_{\frac{j}{2}}^2 + b_{\frac{j}{2}}^2) - \sum_{i=0}^{2n}i(2n-i)(a_ia_{2n-i} - b_ib_{2n-i})\\
    &- \sum_{\text{even }j\geq 2n} (\frac{j}{2}-n)(\frac{j}{2}+n)(a_{\frac{j}{2}-n}a_{\frac{j}{2}+n} + b_{\frac{j}{2}-n}b_{\frac{j}{2}+n})\\
    &-\sum_{\text{even }j\geq 2n} (n+\frac{j}{2})(\frac{j}{2}-n)(a_{n+\frac{j}{2}}a_{\frac{j}{2}-n} + b_{n+\frac{j}{2}}b_{\frac{j}{2}-n})]\\
    =&\frac{\sqrt{\pi}}{4} [2\sum_{j=0}^\infty j^2(a_j^2 + b_j^2)- \sum_{i=0}^{2n}i(2n-i)(a_ia_{2n-i} - b_ib_{2n-i})\\
    &-2\sum_{j=n}^\infty (j-n)(j+n)(a_{j-n}a_{j+n} + b_{j-n}b_{j+n})]\\
    \addtocounter{equation}{1}\tag{\theequation}=&\frac{\sqrt{\pi}}{4} [2\sum_{j=1}^\infty j^2(a_j^2 + b_j^2)- \sum_{j=0}^{2n}j(2n-j)(a_ja_{2n-j} - b_jb_{2n-j})\\
    &-2\sum_{j=n}^\infty (j^2-n^2)(a_{j-n}a_{j+n} + b_{j-n}b_{j+n})]
\end{align*}
Since we have seen our integral constants involving both $n$ and $k$ are equivalent to ones calculated above whenever $n=k$, we now calculate these constants ($\mathcal{K}_{n,k}$ to $\mathcal{N}_{k,n}$ and $\mathcal{T}_{n,k}$ to $\mathcal{W}_{k,n}$) first assuming $n<k$:
\begin{align*}
    \addtocounter{equation}{1}\tag{\theequation}\mathcal{K}_{n,k} =&\frac{1}{\sqrt{\pi}}\int_0^{2\pi} \rho^\prime(\theta)\sin(k\theta)\cos(n\theta)d\theta\\
    =&\frac{1}{2\sqrt{\pi}}\int_0^{2\pi} \rho^\prime(\theta)(\sin((k-n)\theta) + \sin((k+n)\theta))d\theta\\
    =&\frac{1}{2\sqrt{\pi}}\int_0^{2\pi} (\sin((k-n)\theta) + \sin((k+n)\theta))\sum_{j=0}^\infty ja_j\cos(j\theta) - jb_j\sin(j\theta)d\theta\\
    =&\frac{1}{2\sqrt{\pi}}\sum_{j=0}^\infty \int_0^{2\pi} (\sin((k-n)\theta) + \sin((k+n)\theta))(ja_j\cos(j\theta) - jb_j\sin(j\theta))d\theta\\
    \addtocounter{equation}{1}\tag{\theequation}=&\frac{\sqrt{\pi}}{2}(-(k-n)b_{k-n} - (k+n)b_{k+n})\\
    \addtocounter{equation}{1}\tag{\theequation}\mathcal{L}_{n,k} =&\frac{1}{\sqrt{\pi}}\int_0^{2\pi} \rho(\theta)\cos(k\theta)\cos(n\theta)d\theta\\
    =&\frac{1}{2\sqrt{\pi}}\int_0^{2\pi} \rho(\theta)(\cos((k-n)\theta) + \cos((k+n)\theta))d\theta\\
    =&\frac{1}{2\sqrt{\pi}}\int_0^{2\pi} (\cos((k-n)\theta) + \cos((k+n)\theta))\sum_{j=0}^\infty a_j\sin(j\theta) + b_j\cos(j\theta)d\theta\\
    =&\frac{1}{2\sqrt{\pi}}\sum_{j=0}^\infty \int_0^{2\pi} (\cos((k-n)\theta) + \cos((k+n)\theta))(a_j\sin(j\theta) + b_j\cos(j\theta))d\theta\\
    \addtocounter{equation}{1}\tag{\theequation}=&\frac{\sqrt{\pi}}{2}(b_{k-n} + b_{k+n})\\
    \addtocounter{equation}{1}\tag{\theequation}\mathcal{M}_{n,k} =&\frac{1}{\sqrt{\pi}}\int_0^{2\pi} \rho^\prime(\theta)\cos(k\theta)\cos(n\theta)d\theta\\
    =&\frac{1}{2\sqrt{\pi}}\int_0^{2\pi} \rho^\prime(\theta)(\cos((k-n)\theta) + \cos((k+n)\theta))d\theta\\
    =&\frac{1}{2\sqrt{\pi}}\int_0^{2\pi} (\cos((k-n)\theta) + \cos((k+n)\theta))\sum_{j=0}^\infty ja_j\cos(j\theta) - jb_j\sin(j\theta)d\theta\\
    =&\frac{1}{2\sqrt{\pi}}\sum_{j=0}^\infty \int_0^{2\pi} (\cos((k-n)\theta) + \cos((k+n)\theta))(ja_j\cos(j\theta) - jb_j\sin(j\theta))d\theta\\
    \addtocounter{equation}{1}\tag{\theequation}=&\frac{\sqrt{\pi}}{2}((k-n)a_{k-n} + (k+n)a_{k+n})\\
    \addtocounter{equation}{1}\tag{\theequation}\mathcal{N}_{n,k} =&\frac{1}{\sqrt{\pi}}\int_0^{2\pi} \rho(\theta)\sin(k\theta)\cos(n\theta)d\theta\\
    =&\frac{1}{2\sqrt{\pi}}\int_0^{2\pi} \rho(\theta)(\sin((k-n)\theta) + \sin((k+n)\theta))d\theta\\
    =&\frac{1}{2\sqrt{\pi}}\int_0^{2\pi} (\sin((k-n)\theta) + \sin((k+n)\theta))\sum_{j=0}^\infty a_j\sin(j\theta) + b_j\cos(j\theta)d\theta\\
    =&\frac{1}{2\sqrt{\pi}}\sum_{j=0}^\infty \int_0^{2\pi} (\sin((k-n)\theta) + \sin((k+n)\theta))(a_j\sin(j\theta) + b_j\cos(j\theta))d\theta\\
    \addtocounter{equation}{1}\tag{\theequation}=&\frac{\sqrt{\pi}}{2}(a_{k-n} + a_{k+n})\\
    \addtocounter{equation}{1}\tag{\theequation}\mathcal{T}_{n,k} =&\frac{1}{\sqrt{\pi}}\int_0^{2\pi} \rho^\prime(\theta)\sin(k\theta)\sin(n\theta)d\theta\\
    =&\frac{1}{2\sqrt{\pi}}\int_0^{2\pi} \rho^\prime(\theta)(\cos((k-n)\theta) - \cos((k+n)\theta))d\theta\\
    =&\frac{1}{2\sqrt{\pi}}\int_0^{2\pi} (\cos((k-n)\theta) - \cos((k+n)\theta))\sum_{j=0}^\infty ja_j\cos(j\theta) - jb_j\sin(j\theta)d\theta\\
    =&\frac{1}{2\sqrt{\pi}}\sum_{j=0}^\infty \int_0^{2\pi} (\cos((k-n)\theta) - \cos((k+n)\theta))(ja_j\cos(j\theta) - jb_j\sin(j\theta))d\theta\\
    \addtocounter{equation}{1}\tag{\theequation}=&\frac{\sqrt{\pi}}{2}((k-n)a_{k-n} - (k+n)a_{k+n})\\
    \addtocounter{equation}{1}\tag{\theequation}\mathcal{U}_{n,k} =&\frac{1}{\sqrt{\pi}}\int_0^{2\pi} \rho(\theta)\cos(k\theta)\sin(n\theta)d\theta\\
    =&\frac{1}{2\sqrt{\pi}}\int_0^{2\pi} \rho(\theta)(- \sin((k-n)\theta) + \sin((k+n)\theta) )d\theta\\
    =&\frac{1}{2\sqrt{\pi}}\int_0^{2\pi} (- \sin((k-n)\theta) + \sin((k+n)\theta))\sum_{j=0}^\infty a_j\sin(j\theta) + b_j\cos(j\theta)d\theta\\
    =&\frac{1}{2\sqrt{\pi}}\sum_{j=0}^\infty \int_0^{2\pi} (- \sin((k-n)\theta) + \sin((k+n)\theta))(a_j\sin(j\theta) + b_j\cos(j\theta))d\theta\\
    \addtocounter{equation}{1}\tag{\theequation}=&\frac{\sqrt{\pi}}{2}(-a_{k-n} + a_{k+n})\\
    \addtocounter{equation}{1}\tag{\theequation}\mathcal{V}_{n,k} =&\frac{1}{\sqrt{\pi}}\int_0^{2\pi} \rho^\prime(\theta)\cos(k\theta)\sin(n\theta)d\theta\\
    =&\frac{1}{2\sqrt{\pi}}\int_0^{2\pi} \rho^\prime(\theta)(- \sin((k-n)\theta) + \sin((k+n)\theta) )d\theta\\
    =&\frac{1}{2\sqrt{\pi}}\int_0^{2\pi} (- \sin((k-n)\theta) + \sin((k+n)\theta))\sum_{j=0}^\infty ja_j\cos(j\theta) - jb_j\sin(j\theta)d\theta\\
    =&\frac{1}{2\sqrt{\pi}}\sum_{j=0}^\infty \int_0^{2\pi} (- \sin((k-n)\theta) + \sin((k+n)\theta))(ja_j\cos(j\theta) - jb_j\sin(j\theta))d\theta\\
    \addtocounter{equation}{1}\tag{\theequation}=&\frac{\sqrt{\pi}}{2}((k-n)b_{k-n} - (k+n)b_{k+n})\\
    \addtocounter{equation}{1}\tag{\theequation}\mathcal{W}_{n,k} =&\frac{1}{\sqrt{\pi}}\int_0^{2\pi} \rho(\theta)\sin(k\theta)\sin(n\theta)d\theta\\
    =&\frac{1}{2\sqrt{\pi}}\int_0^{2\pi} \rho(\theta)(\cos((k-n)\theta) - \cos((k+n)\theta))d\theta\\
    =&\frac{1}{2\sqrt{\pi}}\int_0^{2\pi} (\cos((k-n)\theta) - \cos((k+n)\theta))\sum_{j=0}^\infty a_j\sin(j\theta) + b_j\cos(j\theta)d\theta\\
    =&\frac{1}{2\sqrt{\pi}}\sum_{j=0}^\infty \int_0^{2\pi} (\cos((k-n)\theta) - \cos((k+n)\theta))(a_j\sin(j\theta) + b_j\cos(j\theta))d\theta\\
    \addtocounter{equation}{1}\tag{\theequation}=&\frac{\sqrt{\pi}}{2}(b_{k-n} - b_{k+n})
\end{align*}
We now repeat the above calculations assuming $k < n$:
\begin{align*}
    \addtocounter{equation}{1}\tag{\theequation}\mathcal{K}_{n,k} =&\frac{1}{\sqrt{\pi}}\int_0^{2\pi} \rho^\prime(\theta)\sin(k\theta)\cos(n\theta)d\theta\\
    =&\frac{1}{2\sqrt{\pi}}\int_0^{2\pi} \rho^\prime(\theta)(-\sin((n-k)\theta) + \sin((k+n)\theta))d\theta\\
    =&\frac{1}{2\sqrt{\pi}}\int_0^{2\pi} (-\sin((n-k)\theta) + \sin((k+n)\theta))\sum_{j=0}^\infty ja_j\cos(j\theta) - jb_j\sin(j\theta)d\theta\\
    =&\frac{1}{2\sqrt{\pi}}\sum_{j=0}^\infty \int_0^{2\pi} (-\sin((n-k)\theta) + \sin((k+n)\theta))(ja_j\cos(j\theta) - jb_j\sin(j\theta))d\theta\\
    =&\frac{\sqrt{\pi}}{2}((n-k)b_{n-k} - (k+n)b_{k+n})\\
    \addtocounter{equation}{1}\tag{\theequation}=&\frac{\sqrt{\pi}}{2}(-(k-n)b_{n-k} - (k+n)b_{k+n})\\
    \addtocounter{equation}{1}\tag{\theequation}\mathcal{L}_{n,k} =&\frac{1}{\sqrt{\pi}}\int_0^{2\pi} \rho(\theta)\cos(k\theta)\cos(n\theta)d\theta\\
    =&\frac{1}{2\sqrt{\pi}}\int_0^{2\pi} \rho(\theta)(\cos((n-k)\theta) + \cos((k+n)\theta))d\theta\\
    =&\frac{1}{2\sqrt{\pi}}\int_0^{2\pi} (\cos((n-k)\theta) + \cos((k+n)\theta))\sum_{j=0}^\infty a_j\sin(j\theta) + b_j\cos(j\theta)d\theta\\
    =&\frac{1}{2\sqrt{\pi}}\sum_{j=0}^\infty \int_0^{2\pi} (\cos((n-k)\theta) + \cos((k+n)\theta))(a_j\sin(j\theta) + b_j\cos(j\theta))d\theta\\
    \addtocounter{equation}{1}\tag{\theequation}=&\frac{\sqrt{\pi}}{2}(b_{n-k} + b_{k+n})\\
    \addtocounter{equation}{1}\tag{\theequation}\mathcal{M}_{n,k} =&\frac{1}{\sqrt{\pi}}\int_0^{2\pi} \rho^\prime(\theta)\cos(k\theta)\cos(n\theta)d\theta\\
    =&\frac{1}{2\sqrt{\pi}}\int_0^{2\pi} \rho^\prime(\theta)(\cos((n-k)\theta) + \cos((k+n)\theta))d\theta\\
    =&\frac{1}{2\sqrt{\pi}}\int_0^{2\pi} (\cos((n-k)\theta) + \cos((k+n)\theta))\sum_{j=0}^\infty ja_j\cos(j\theta) - jb_j\sin(j\theta)d\theta\\
    =&\frac{1}{2\sqrt{\pi}}\sum_{j=0}^\infty \int_0^{2\pi} (\cos((n-k)\theta) + \cos((k+n)\theta))(ja_j\cos(j\theta) - jb_j\sin(j\theta))d\theta\\
    =&\frac{\sqrt{\pi}}{2}((n-k)a_{n-k} + (k+n)a_{k+n})\\
    \addtocounter{equation}{1}\tag{\theequation}=&\frac{\sqrt{\pi}}{2}(-(k-n)a_{n-k} + (k+n)a_{k+n})\\
    \addtocounter{equation}{1}\tag{\theequation}\mathcal{N}_{n,k} =&\frac{1}{\sqrt{\pi}}\int_0^{2\pi} \rho(\theta)\sin(k\theta)\cos(n\theta)d\theta\\
    =&\frac{1}{2\sqrt{\pi}}\int_0^{2\pi} \rho(\theta)(-\sin((n-k)\theta) + \sin((k+n)\theta))d\theta\\
    =&\frac{1}{2\sqrt{\pi}}\int_0^{2\pi} (-\sin((n-k)\theta) + \sin((k+n)\theta))\sum_{j=0}^\infty a_j\sin(j\theta) + b_j\cos(j\theta)d\theta\\
    =&\frac{1}{2\sqrt{\pi}}\sum_{j=0}^\infty \int_0^{2\pi} (-\sin((n-k)\theta) + \sin((k+n)\theta))(a_j\sin(j\theta) + b_j\cos(j\theta))d\theta\\
    \addtocounter{equation}{1}\tag{\theequation}=&\frac{\sqrt{\pi}}{2}(-a_{n-k} + a_{k+n})\\
    \addtocounter{equation}{1}\tag{\theequation}\mathcal{T}_{n,k} =&\frac{1}{\sqrt{\pi}}\int_0^{2\pi} \rho^\prime(\theta)\sin(k\theta)\sin(n\theta)d\theta\\
    =&\frac{1}{2\sqrt{\pi}}\int_0^{2\pi} \rho^\prime(\theta)(\cos((n-k)\theta) - \cos((k+n)\theta))d\theta\\
    =&\frac{1}{2\sqrt{\pi}}\int_0^{2\pi} (\cos((n-k)\theta) - \cos((k+n)\theta))\sum_{j=0}^\infty ja_j\cos(j\theta) - jb_j\sin(j\theta)d\theta\\
    =&\frac{1}{2\sqrt{\pi}}\sum_{j=0}^\infty \int_0^{2\pi} (\cos((n-k)\theta) - \cos((k+n)\theta))(ja_j\cos(j\theta) - jb_j\sin(j\theta))d\theta\\
    =&\frac{\sqrt{\pi}}{2}((n-k)a_{n-k} - (k+n)a_{k+n})\\
    \addtocounter{equation}{1}\tag{\theequation}=&\frac{\sqrt{\pi}}{2}(-(k-n)a_{n-k} - (k+n)a_{k+n})\\
    \addtocounter{equation}{1}\tag{\theequation}\mathcal{U}_{n,k} =&\frac{1}{\sqrt{\pi}}\int_0^{2\pi} \rho(\theta)\cos(k\theta)\sin(n\theta)d\theta\\
    =&\frac{1}{2\sqrt{\pi}}\int_0^{2\pi} \rho(\theta)(\sin((n-k)\theta) + \sin((k+n)\theta) )d\theta\\
    =&\frac{1}{2\sqrt{\pi}}\int_0^{2\pi} (\sin((n-k)\theta) + \sin((k+n)\theta))\sum_{j=0}^\infty a_j\sin(j\theta) + b_j\cos(j\theta)d\theta\\
    =&\frac{1}{2\sqrt{\pi}}\sum_{j=0}^\infty \int_0^{2\pi} (\sin((n-k)\theta) + \sin((k+n)\theta))(a_j\sin(j\theta) + b_j\cos(j\theta))d\theta\\
    \addtocounter{equation}{1}\tag{\theequation}=&\frac{\sqrt{\pi}}{2}(a_{n-k} + a_{k+n})\\
    \addtocounter{equation}{1}\tag{\theequation}\mathcal{V}_{n,k} =&\frac{1}{\sqrt{\pi}}\int_0^{2\pi} \rho^\prime(\theta)\cos(k\theta)\sin(n\theta)d\theta\\
    =&\frac{1}{2\sqrt{\pi}}\int_0^{2\pi} \rho^\prime(\theta)(\sin((n-k)\theta) + \sin((k+n)\theta) )d\theta\\
    =&\frac{1}{2\sqrt{\pi}}\int_0^{2\pi} (\sin((n-k)\theta) + \sin((k+n)\theta))\sum_{j=0}^\infty ja_j\cos(j\theta) - jb_j\sin(j\theta)d\theta\\
    =&\frac{1}{2\sqrt{\pi}}\sum_{j=0}^\infty \int_0^{2\pi} (\sin((n-k)\theta) + \sin((k+n)\theta))(ja_j\cos(j\theta) - jb_j\sin(j\theta))d\theta\\
    =&\frac{\sqrt{\pi}}{2}(-(n-k)b_{n-k} - (k+n)b_{k+n})\\
    \addtocounter{equation}{1}\tag{\theequation}=&\frac{\sqrt{\pi}}{2}((k-n)b_{n-k} - (k+n)b_{k+n})\\
    \addtocounter{equation}{1}\tag{\theequation}\mathcal{W}_{n,k} =&\frac{1}{\sqrt{\pi}}\int_0^{2\pi} \rho(\theta)\sin(k\theta)\sin(n\theta)d\theta\\
    =&\frac{1}{2\sqrt{\pi}}\int_0^{2\pi} \rho(\theta)(\cos((n-k)\theta) - \cos((k+n)\theta))d\theta\\
    =&\frac{1}{2\sqrt{\pi}}\int_0^{2\pi} (\cos((n-k)\theta) - \cos((k+n)\theta))\sum_{j=0}^\infty a_j\sin(j\theta) + b_j\cos(j\theta)d\theta\\
    =&\frac{1}{2\sqrt{\pi}}\sum_{j=0}^\infty \int_0^{2\pi} (\cos((n-k)\theta) - \cos((k+n)\theta))(a_j\sin(j\theta) + b_j\cos(j\theta))d\theta\\
    \addtocounter{equation}{1}\tag{\theequation}=&\frac{\sqrt{\pi}}{2}(b_{n-k} - b_{k+n})
\end{align*}
We notice that the convention $-a_j = a_{-j}$ and $b_j = b_{-j}, \forall j \in \Z\setminus\{0\}$ aligns the two formulas of each constant involving $k$ for all $k \neq n$. We will apply these conventions going forward to present a single formula for each such constant.

\subsection{Integral Constant Values}
\label{integralsvalue}
For reference, here we present the values of all integral constants, computed in \ref{computingintegrals}.
\begin{align*}
    \mathcal{A}_n =&\frac{\sqrt{\pi}}{4}[4b_0^2 + 2   \sum_{j=1}^\infty(a_j^2 + b_j^2) + \sum_{j=0}^{2n}(b_jb_{2n-j}-a_ja_{2n-j}) +2\sum_{j=n}^\infty(b_{j-n}b_{j+n}+a_{j-n}a_{j+n})]\\
    \mathcal{B}_n =&\frac{\sqrt{\pi}}{2}(2b_0 + b_{2n})\\
    \mathcal{C}_n =&\frac{\sqrt{\pi}}{4} [2\sum_{j=1}^\infty j^2(a_j^2 + b_j^2)+ \sum_{j=0}^{2n}j(2n-j)(a_ja_{2n-j} - b_jb_{2n-j})\\
    &+2\sum_{j=n}^\infty (j^2-n^2)(a_{j-n}a_{j+n} + b_{j-n}b_{j+n})]\nonumber\\
    \mathcal{D}_n =&\frac{\sqrt{\pi}}{4}[\sum_{j=0}^{2n}(2n-j)(a_{j}a_{2n-j} - b_jb_{2n-j}) - 2n\sum_{j=n}^\infty(a_{j-n}a_{j+n} + b_{j-n}b_{j+n})]\\
    \mathcal{E}_n =&-n\sqrt{\pi}b_{2n}\\
    \mathcal{F}_n =&\frac{\sqrt{\pi}}{4}[\sum_{j=0}^{2n}(a_jb_{2n-j} + b_ja_{2n-j}) + 2\sum_{j=n}^\infty(b_{j-n}a_{j+n}-a_{j-n}b_{j+n})]\\
    \mathcal{G}_n =&\frac{\sqrt{\pi}}{2}a_{2n}\\
    \mathcal{H}_n =&\frac{\sqrt{\pi}}{4} [\sum_{j=0}^{2n}j(2n-j)(-a_jb_{2n-j} - b_ja_{2n-j})+2\sum_{j=n}^\infty (j^2-n^2)(b_{j-n}a_{j+n}-a_{j-n}b_{j+n})]\\
    \mathcal{I}_n =&\frac{\sqrt{\pi}}{4}[\sum_{j=0}^{2n}(2n-j)(a_{j}b_{2n-j} + b_ja_{2n-j}) + 2n\sum_{j=n}^\infty(b_{j-n}a_{j+n}-a_{j-n}b_{j+n})]\\
    \mathcal{J}_n =&n\sqrt{\pi}a_{2n}\\
    \mathcal{O}_n =&\frac{\sqrt{\pi}}{4}[-\sum_{j=0}^{2n}(2n-j)(a_{j}b_{2n-j} + b_ja_{2n-j}) - 2n\sum_{j=n}^\infty(b_{j-n}a_{j+n}-a_{j-n}b_{j+n})]\\
    \mathcal{P}_n =&-n\sqrt{\pi}a_{2n}\\
    \mathcal{Q}_n =&\frac{\sqrt{\pi}}{4}[4b_0^2 + 2\sum_{j=1}^\infty(a_j^2 + b_j^2) - \sum_{j=0}^{2n}(b_jb_{2n-j}-a_ja_{2n-j}) -2\sum_{j=n}^\infty(b_{j-n}b_{j+n}+a_{j-n}a_{j+n})]\\
    \mathcal{R}_n =&\frac{\sqrt{\pi}}{2}(2b_0-b_{2n})\\
    \mathcal{S}_n =&\frac{\sqrt{\pi}}{4} [2\sum_{j=1}^\infty j^2(a_j^2 + b_j^2)- \sum_{j=0}^{2n}j(2n-j)(a_ja_{2n-j} - b_jb_{2n-j})\\
    &-2\sum_{j=n}^\infty (j^2-n^2)(a_{j-n}a_{j+n} + b_{j-n}b_{j+n})]\nonumber
\end{align*}
The following constant formulas use the convention discussed in \ref{computingintegrals} where we take $-a_j = a_{-j}$ and $b_j = b_{-j}, \forall j \in \Z\setminus\{0\}$.
\begin{align*}
    \mathcal{K}_{n,k} =&\frac{\sqrt{\pi}}{2}(-(k-n)b_{k-n} - (k+n)b_{k+n})\\
    \mathcal{L}_{n,k} =&\frac{\sqrt{\pi}}{2}(b_{k-n} + b_{k+n})\\
    \mathcal{M}_{n,k} =&\frac{\sqrt{\pi}}{2}((k-n)a_{k-n} + (k+n)a_{k+n})\\
    \mathcal{N}_{n,k} =&\frac{\sqrt{\pi}}{2}(a_{k-n} + a_{k+n})\\
    \mathcal{T}_{n,k} =&\frac{\sqrt{\pi}}{2}((k-n)a_{k-n} - (k+n)a_{k+n})\\
    \mathcal{U}_{n,k} =&\frac{\sqrt{\pi}}{2}(-a_{k-n} + a_{k+n})\\
    \mathcal{V}_{n,k} =&\frac{\sqrt{\pi}}{2}((k-n)b_{k-n} - (k+n)b_{k+n})\\
    \mathcal{W}_{n,k}=&\frac{\sqrt{\pi}}{2}(b_{k-n} - b_{k+n})
\end{align*}

\end{document}